
\newcommand{\tun}{\begin{picture}(5,0)(-2,-1)
\put(0,0){\circle*{2}}
\end{picture}}

\newcommand{\tdeux}{\begin{picture}(7,7)(0,-1)
\put(3,0){\circle*{2}}
\put(3,5){\circle*{2}}
\put(3,0){\line(0,1){5}}
\end{picture}}

\newcommand{\ttroisun}{\begin{picture}(15,12)(-5,-1)
\put(3,0){\circle*{2}}
\put(6,7){\circle*{2}}
\put(0,7){\circle*{2}}
\put(-0.65,0){$\vee$}
\end{picture}}

\newcommand{\ttroisdeux}{\begin{picture}(5,15)(-2,-1)
\put(0,0){\circle*{2}}
\put(0,5){\circle*{2}}
\put(0,10){\circle*{2}}
\put(0,0){\line(0,1){5}}
\put(0,5){\line(0,1){5}}
\end{picture}}

\newcommand{\tquatreun}{\begin{picture}(15,12)(-5,-1)
\put(3,0){\circle*{2}}
\put(6,7){\circle*{2}}
\put(0,7){\circle*{2}}
\put(3,7){\circle*{2}}
\put(-0.65,0){$\vee$}
\put(3,0){\line(0,1){7}}
\end{picture}}

\newcommand{\tquatredeux}{\begin{picture}(15,18)(-5,-1)
\put(3,0){\circle*{2}}
\put(6,7){\circle*{2}}
\put(0,7){\circle*{2}}
\put(0,14){\circle*{2}}
\put(-0.65,0){$\vee$}
\put(0,7){\line(0,1){7}}
\end{picture}}

\newcommand{\tquatrequatre}{\begin{picture}(15,18)(-5,-1)
\put(3,5){\circle*{2}}
\put(6,12){\circle*{2}}
\put(0,12){\circle*{2}}
\put(3,0){\circle*{2}}
\put(-0.65,5){$\vee$}
\put(3,0){\line(0,1){5}}
\end{picture}}

\newcommand{\tquatrecinq}{\begin{picture}(9,19)(-2,-1)
\put(0,0){\circle*{2}}
\put(0,5){\circle*{2}}
\put(0,10){\circle*{2}}
\put(0,15){\circle*{2}}
\put(0,0){\line(0,1){5}}
\put(0,5){\line(0,1){5}}
\put(0,10){\line(0,1){5}}
\end{picture}}


\newcommand{\tdun}[1]{\begin{picture}(12,5)(-2,-1)
\put(0,0){\circle*{2}}
\put(3,-2){\tiny #1}
\end{picture}}

\newcommand{\tddeux}[2]{\begin{picture}(12,10)(0,-1)
\put(3,0){\circle*{2}}
\put(3,5){\circle*{2}}
\put(3,0){\line(0,1){5}}
\put(6,-2){\tiny #1}
\put(6,4){\tiny #2}
\end{picture}}

\newcommand{\tdtroisun}[3]{\begin{picture}(22,12)(-6,-1)
\put(3,0){\circle*{2}}
\put(6,7){\circle*{2}}
\put(0,7){\circle*{2}}
\put(-0.65,0){$\vee$}
\put(5,-2){\tiny #1}
\put(9,5){\tiny #2}
\put(-7,5){\tiny #3}
\end{picture}}

\newcommand{\tdtroisdeux}[3]{\begin{picture}(12,15)(-2,-1)
\put(0,0){\circle*{2}}
\put(0,5){\circle*{2}}
\put(0,10){\circle*{2}}
\put(0,0){\line(0,1){5}}
\put(0,5){\line(0,1){5}}
\put(3,-2){\tiny #1}
\put(3,4){\tiny #2}
\put(3,11){\tiny #3}
\end{picture}}

\newcommand{\tdquatreun}[4]{\begin{picture}(20,12)(-5,-1)
\put(3,0){\circle*{2}}
\put(6,7){\circle*{2}}
\put(0,7){\circle*{2}}
\put(3,7){\circle*{2}}
\put(-0.6,0){$\vee$}
\put(3,0){\line(0,1){7}}
\put(5,-2){\tiny #1}
\put(8.5,5){\tiny #2}
\put(1,10){\tiny #3}
\put(-5,5){\tiny #4}
\end{picture}}

\newcommand{\tdquatredeux}[4]{\begin{picture}(20,20)(-5,-1)
\put(3,0){\circle*{2}}
\put(6,7){\circle*{2}}
\put(0,7){\circle*{2}}
\put(0,14){\circle*{2}}
\put(-.65,0){$\vee$}
\put(0,7){\line(0,1){7}}
\put(5,-2){\tiny #1}
\put(9,5){\tiny #2}
\put(-5,5){\tiny #3}
\put(-5,12){\tiny #4}
\end{picture}}

\newcommand{\tdquatrequatre}[4]{\begin{picture}(20,14)(-5,-1)
\put(3,5){\circle*{2}}
\put(6,12){\circle*{2}}
\put(0,12){\circle*{2}}
\put(3,0){\circle*{2}}
\put(-.65,5){$\vee$}
\put(3,0){\line(0,1){5}}
\put(6,-3){\tiny #1}
\put(6,4){\tiny #2}
\put(9,12){\tiny #3}
\put(-6,12){\tiny #4}
\end{picture}}

\newcommand{\tdquatrecinq}[4]{\begin{picture}(12,19)(-2,-1)
\put(0,0){\circle*{2}}
\put(0,5){\circle*{2}}
\put(0,10){\circle*{2}}
\put(0,15){\circle*{2}}
\put(0,0){\line(0,1){5}}
\put(0,5){\line(0,1){5}}
\put(0,10){\line(0,1){5}}
\put(3,-2){\tiny #1}
\put(3,3){\tiny #2}
\put(3,9){\tiny #3}
\put(3,15){\tiny #4}
\end{picture}}


\newcommand{\ptroisun}{\begin{picture}(15,12)(-5,-1)
\put(3,7){\circle*{2}}
\put(-0.65,0){$\wedge$}
\put(6,0){\circle*{2}}
\put(0,0){\circle*{2}}
\end{picture}}

\newcommand{\pquatreun}{\begin{picture}(15,12)(-5,-1)
\put(3,7){\circle*{2}}
\put(-0.65,0){$\wedge$}
\put(6,0){\circle*{2}}
\put(0,0){\circle*{2}}
\put(3,0){\circle*{2}}
\put(2.9,0){\line(0,1){7}}
\end{picture}}
\newcommand{\pquatredeux}{\begin{picture}(15,18)(-5,-1)
\put(3,14){\circle*{2}}
\put(-0.65,7){$\wedge$}
\put(6,7){\circle*{2}}
\put(0,7){\circle*{2}}
\put(0,0){\circle*{2}}
\put(0,0){\line(0,1){7}}
\end{picture}}

\newcommand{\pquatrequatre}{\begin{picture}(15,18)(-5,-1)
\put(3,7){\circle*{2}}
\put(-0.65,0){$\wedge$}
\put(6,0){\circle*{2}}
\put(0,0){\circle*{2}}
\put(3,12){\circle*{2}}
\put(3,7){\line(0,1){5}}
\end{picture}}
\newcommand{\pquatrecinq}{\begin{picture}(15,12)(-5,-1)
\put(0,0){\circle*{2}}
\put(7,0){\circle*{2}}
\put(0,7){\circle*{2}}
\put(7,7){\circle*{2}}
\put(0,0){\line(0,1){7}}
\put(7,0){\line(0,1){7}}
\put(.5,1.5){$\scriptstyle \diagup$}
\end{picture}}
\newcommand{\pquatresix}{\begin{picture}(15,12)(-5,-1)
\put(0,0){\circle*{2}}
\put(7,0){\circle*{2}}
\put(0,7){\circle*{2}}
\put(7,7){\circle*{2}}
\put(0,0){\line(0,1){7}}
\put(7,0){\line(0,1){7}}
\put(0,1.5){$\scriptstyle \diagdown$}
\end{picture}}
\newcommand{\pquatresept}{\begin{picture}(15,12)(-5,-1)
\put(0,0){\circle*{2}}
\put(7,0){\circle*{2}}
\put(0,7){\circle*{2}}
\put(7,7){\circle*{2}}
\put(0,0){\line(0,1){7}}
\put(7,0){\line(0,1){7}}
\put(.5,1.5){$\scriptstyle \diagup$}
\put(0,1.5){$\scriptstyle \diagdown$}
\end{picture}}
\newcommand{\pquatrehuit}{\begin{picture}(15,18)(-5,-1)
\put(3,0){\circle*{2}}
\put(-0.65,0){$\vee$}
\put(6,7){\circle*{2}}
\put(0,7){\circle*{2}}
\put(3,14){\circle*{2}}
\put(-0.65,7){$\wedge$}
\end{picture}}
\newcommand{\psix}{\begin{picture}(15,12)(-5,-1)
\put(0,0){\circle*{2}}
\put(7,0){\circle*{2}}
\put(0,7){\circle*{2}}
\put(7,7){\circle*{2}}
\put(14,0){\circle*{2}}
\put(14,7){\circle*{2}}
\put(0,0){\line(0,1){7}}
\put(7,0){\line(0,1){7}}
\put(14,0){\line(0,1){7}}
\put(0.5,1.5){$\scriptstyle \diagup$}
\put(7.5,1.5){$\scriptstyle \diagup$}
\scalebox{1.8}{\put(-0.4,-0.3){$\smallsetminus$}}
\end{picture}}


\newcommand{\pdtroisun}[3]{\begin{picture}(23,12)(-7,-1)
\put(3,7){\circle*{2}}
\put(-0.65,0){$\wedge$}
\put(6,0){\circle*{2}}
\put(0,0){\circle*{2}}
\put(5,5){\tiny #1}
\put(-7,-2){\tiny #2}
\put(9,-2){\tiny #3}
\end{picture}}

\newcommand{\pdquatreun}[4]{\begin{picture}(15,12)(-5,-1)
\put(3,7){\circle*{2}}
\put(-0.65,0){$\wedge$}
\put(6,0){\circle*{2}}
\put(0,0){\circle*{2}}
\put(3,0){\circle*{2}}
\put(2.9,0){\line(0,1){7}}
\put(-6,-2){\tiny #1}
\put(6,6){\tiny #2}
\put(8,-2){\tiny #3}
\put(2,-8){\tiny #4}
\end{picture}}
\newcommand{\pdquatredeux}[4]{\begin{picture}(17,18)(-5,-1)
\put(3,14){\circle*{2}}
\put(-0.65,7){$\wedge$}
\put(6,7){\circle*{2}}
\put(0,7){\circle*{2}}
\put(0,0){\circle*{2}}
\put(0,0){\line(0,1){7}}
\put(-6,-2){\tiny #1}
\put(-6,6){\tiny #2}
\put(8,6){\tiny #3}
\put(5,14){\tiny #4}
\end{picture}}

\newcommand{\pdquatrequatre}[4]{\begin{picture}(15,18)(-5,-1)
\put(3,7){\circle*{2}}
\put(-0.65,0){$\wedge$}
\put(6,0){\circle*{2}}
\put(0,0){\circle*{2}}
\put(3,12){\circle*{2}}
\put(3,7){\line(0,1){5}}
\put(-6,-2){\tiny #1}
\put(8,-2){\tiny #2}
\put(6,6){\tiny #3}
\put(6,13){\tiny #4}
\end{picture}}

\newcommand{\pdquatresix}[4]{\begin{picture}(20,12)(-5,-1)
\put(0,0){\circle*{2}}
\put(7,0){\circle*{2}}
\put(0,7){\circle*{2}}
\put(7,7){\circle*{2}}
\put(0,0){\line(0,1){7}}
\put(7,0){\line(0,1){7}}
\put(.5,1.5){$\scriptstyle \diagup$}
\put(-6,-2){\tiny #1}
\put(9,-2){\tiny #2}
\put(-6,5){\tiny #3}
\put(9,5){\tiny #4}
\end{picture}}

\newcommand{\pdquatresept}[4]{\begin{picture}(20,12)(-5,-1)
\put(0,0){\circle*{2}}
\put(7,0){\circle*{2}}
\put(0,7){\circle*{2}}
\put(7,7){\circle*{2}}
\put(0,0){\line(0,1){7}}
\put(7,0){\line(0,1){7}}
\put(.5,1.5){$\scriptstyle \diagup$}
\put(0,1.5){$\scriptstyle \diagdown$}
\put(-6,-2){\tiny #1}
\put(9,-2){\tiny #2}
\put(-6,5){\tiny #3}
\put(9,5){\tiny #4}
\end{picture}}

\newcommand{\pdquatrehuit}[4]{\begin{picture}(15,18)(-5,-1)
\put(3,0){\circle*{2}}
\put(-0.65,0){$\vee$}
\put(6,7){\circle*{2}}
\put(0,7){\circle*{2}}
\put(3,14){\circle*{2}}
\put(-0.65,7){$\wedge$}
\put(5,-3){\tiny #1}
\put(5,13){\tiny #2}
\put(-6,5){\tiny #3}
\put(8,5){\tiny #4}
\end{picture}}


\font \sevenrm=cmr7
\font \fiverm=cmr5

\documentclass[11pt, english]{amsart}
\input xy
\xyoption{all}
\usepackage{amsfonts}
\usepackage{amssymb}
\usepackage{amsmath}
\usepackage{color}
\usepackage{graphicx}
\usepackage{xspace}
\usepackage{axodraw}
\usepackage{wasysym}
\usepackage{bm}

 \newcommand{\nc}{\newcommand}
 \newcommand{\isoclasse}[1]{\lfloor #1 \rfloor}
   


 \setlength{\textheight}{9in}
 \setlength{\topmargin}{-0pt}
 \setlength{\textwidth}{6.4in}
 \setlength{\oddsidemargin}{-0pt}
 \setlength{\evensidemargin}{-0pt}

\hfuzz5pt \vfuzz5pt
 {\everymath{\displaystyle\everymath{}}\array}%
 {\endarray}

\newtheorem{thm}{Theorem}

\newtheorem{cor}[thm]{Corollary}

\newtheorem{lem}[thm]{Lemma}
\newtheorem{prop}[thm]{Proposition}

\newtheorem{rmk}[thm]{Remark}

\nc{\comment}[1]{[[{\tt #1}]] }
\nc{\Cal}[1]{{\mathcal {#1}}}
\nc{\mop}[1]{\mathop{\hbox {\rm #1} }\nolimits}
\nc{\gmop}[1]{\mathop{\hbox {\bf #1} }\nolimits}

\nc{\smop}[1]{\mathop{\hbox {\sevenrm #1} }\nolimits}
\nc{\ssmop}[1]{\mathop{\hbox {\fiverm #1} }\nolimits}
\nc{\mopl}[1]{\mathop{\hbox {\rm #1} }\limits}
\nc{\smopl}[1]{\mathop{\hbox {\sevenrm #1} }\limits}
\nc{\ssmopl}[1]{\mathop{\hbox {\fiverm #1} }\limits}
\nc{\frakg}{{\frak g}}
\nc{\g}[1]{{\frak {#1}}}
\def \restr#1{\mathstrut_{\textstyle |}\raise-6pt\hbox{$\scriptstyle #1$}}
\def \srestr#1{\mathstrut_{\scriptstyle |}\hbox to
  -1.5pt{}\raise-4pt\hbox{$\scriptscriptstyle #1$}}
\nc{\wt}{\widetilde} \nc{\wh}{\widehat}

\nc{\redtext}[1]{\textcolor{red}{#1}}
\nc{\bluetext}[1]{\textcolor{blue}{#1}}

\nc\fleche[1]{\mathop{\hbox to #1 mm{\rightarrowfill}}\limits}
\nc{\ignore}[1]{}
\def\semi{\mathrel{\times}\kern -.85pt\joinrel\mathrel{\raise
    1.4pt\hbox{${\scriptscriptstyle |}$}}}
\nc\R{{\mathbb R}}
\nc\N{{\mathbb N}}
\nc\inver{^{-1}}
\nc\point{\hbox{\bf .}}
\nc\un{\hbox{\bf 1}}
\def\link#1#2{\raise -2pt\hbox{$\scriptstyle #1-\!\!-\!\!- #2$}}
\def\slink#1#2{\raise -1.5pt\hbox{$\scriptscriptstyle #1-\!\!\!-\!\!\!- #2$}}


\begin{document}

\title[poset operads]{Operads of finite posets}

\author{Fr\'ed\'eric Fauvet}
\address{IRMA,
10 rue du G\'en\'eral Zimmer,
67084 Strasbourg Cedex, France}
	   \email{fauvet@math.unistra.fr}
 	  \urladdr{}
	 
\author{Lo\"\i c Foissy}
\address{Universit\'e du Littoral - C\^ote d'Opale, Calais}
	   \email{Loic.Foissy@lmpa.univ-littoral.fr}
 	  \urladdr{}
         
\author{Dominique Manchon}
\address{Universit\'e Blaise Pascal,
         C.N.R.S.-UMR 6620, 3 place Vasar\'ely, CS 60026,
         63178 Aubi\`ere, France}       
         \email{manchon@math.univ-bpclermont.fr}
         \urladdr{http://math.univ-bpclermont.fr/~manchon/}

\date{April 27th 2016}

\begin{abstract}
We describe four natural operad structures on the vector space generated by isomorphism classes of finite posets. The three last ones are set-theoretical and can be seen as a simplified version of the first, the same way the NAP operad behaves with respect to the pre-Lie operad. Moreover the two first ones are isomorphic.

\bigskip

\noindent {\bf{Keywords}}: partial orders, finite posets, Hopf algebras, posets, operads

\smallskip

\noindent {\bf{Math. subject classification}}: 05E05, 06A11, 16T30.
\end{abstract}

%
%

\maketitle
\tableofcontents
\section{Introduction}
A finite poset is a finite set $E$ endowed with a partial order $\le$. The Hasse diagram of $(E,\le)$ is obtained by representing any element of $E$ by a vertex, and by drawing a directed edge from $e$ to $e'$ if and only if $e'$ sits directly above $e$, i.e. $e<e'$ and, for any $e''\in E$ such that $e\le e''\le e'$, one has $e''=e$ or $e''=e'$. A poset is \textsl{connected} if its Hasse diagram is connected. Here are the isomorphism classes of connected posets up to four vertices, represented by their Hasse diagrams:
\begin{equation*}
\scalebox{1.3}{\tun; \hskip 3mm \tdeux; \hskip 3mm \ttroisun, \ttroisdeux, \ptroisun; \hskip 3mm
\tquatreun, \tquatredeux, \tquatrequatre, \tquatrecinq, \pquatreun, \pquatredeux, \pquatrequatre, \pquatrecinq, \pquatresept, \pquatrehuit.}
\end{equation*}
Let $\le$ and $\leqslant$ be two partial orders on the same finite set $E$. We say that $\leqslant$ is finer than $\le$ if $x\leqslant y\Rightarrow x\le y$ for any $x,y\in E$. We also say that the poset $(E,\leqslant)$ is finer than the poset $(E,\le)$ and we write $(E,\leqslant)\preceq (E,\le)$. The finest partial order on $E$ is the trivial one, for which any $x\in E$ is only comparable with itself.\\

It is well-known that the linear span $\Cal H$ (over a field $\bm k$) of the isomorphism classes of posets is a commutative incidence Hopf algebra: see \cite[Paragraph 16]{S94}, taking for $\Cal F$ the family of all finite posets with the notations therein. The product is given by the disjoint union, and the coproduct is given by:
\begin{equation}
\Delta_* P=\sum_{P_1\sqcup P_2=P,\, P_1<P_2} P_1\otimes P_2,
\end{equation}
where the sum runs over all \textsl{admissible cuts} of the poset $P$, i.e. partitions of $P$ into two (possibly empty) subposets $P_1$ and $P_2$ such that $P_1<P_2$, which means that for any $x\in P_1$ and $y\in P_2$, we have $x\not\ge y$. For algebraic structures on finite posets, see also \cite{MR11}.\\

The linear species of finite posets $\mathbb P$ is defined as follows: to any finite set $A$ one associates the vector space $\mathbb P_A$ freely generated by all poset structures on $A$. The species structure is obviously defined by relabelling. The natural internal involution is defined as follows: for any finite poset $\Cal A=(A,\le)$ we set $\overline{\Cal A}:=(A,\ge)$.\\

We exhibit in this work several (precisely four) natural operad structures on the linear species $\mathbb P$ of posets. Let us briefly describe them by their partial compositions \cite[Paragraph 5.3.7]{LV11}: the first one, denoted by a circle $\circ$, is obtained by inserting one poset into another one at some vertex summing up over all possibilities. The three others, denoted by a black circle $\bullet$ and two black triangles $\blacktriangledown$ and $\blacktriangle$, are obtained by retaining three different "optimal" insertions, in a way to be precised. The natural involution on posets preserves the set operad $\bullet$, and exchanges $\blacktriangledown$ and $\blacktriangle$.\\

It happens that the operads $(\mathbb P,\circ)$ and $(\mathbb P,\bullet)$ are isomorphic: there is a species automorphism $\Phi$ of $\mathbb P$ such that for any pair $(\Cal A,\Cal B)$ of posets and any $a\in \Cal A$, we have:
\begin{equation}
\Phi(\Cal A\bullet_a\Cal B)=\Phi(\Cal A)\circ_a\Phi(\Cal B).
\end{equation}
The species automorphism $\Phi$ is simply defined by:
\begin{align*}
\Phi_A:\mathbb P_A &\longrightarrow \mathbb P_A\\
\Cal A &\longmapsto \sum_{\Cal A'\preceq\Cal A}\Cal A'.
\end{align*}
We give in Section \ref{sect:comp} some compatibility relations between the three set-theoretic operad structures $\bullet$, $\blacktriangledown$ and $\blacktriangle$. Section \ref{sect:alg} is devoted to the various binary products on free algebras given by the elements of arity two in the operads under investigation. Three associative products and two non-associative permutative (NAP) products are obtained this way. Coproducts obtained by dualization of the aforementioned associative products are also investigated.\\

We give in Section \ref{sect:suboperads} a presentation of the suboperad $\mathbb{WNP}$ of $(\mathbb P, \bullet)$ generated by the elements of arity two. The posets thus obtained are exactly the WN posets, i.e. the posets which do not contain $\pquatresix$ as a subposet \cite{Fposets}. Finally, we give a presentation of the suboperad of $\triangledown$-compatible posets, i.e. the suboperad $\mathbb{CP}_{\triangledown}$ of $(\mathbb P, \blacktriangledown)$ generated by the elements of arity two, as well as a set-theoretical species isomorphism $\theta:\mathbb{WNP}\to\mathbb{CP}_{\triangledown}$ from WN posets onto $\triangledown$-compatible posets.
\section{A first operad structure on finite posets}\label{sect:fp}
\subsection{Reminder on the set operad of sets}
We refer the reader to \cite{M15} for an account of species as well as operads in the species formalism, see also \cite{AM10}. Recall that a set species (resp. a linear species) is a contravariant\footnote{in order to get right actions of permutation groups. It is purely a matter of convention: \cite{AM10} and \cite{M15} prefer the covariant definition.} functor from the category of finite sets together with bijections, into the category of sets (resp. vector spaces) together with maps (resp. linear maps). The collection of finite sets is endowed with a tautological species structure, given by $A\mapsto A$ for any finite set, and $\varphi\mapsto \varphi^{-1}$ for any bijection $\varphi:A\to B$.\\

\noindent Let $A$ and $B$ be two finite sets, and let $a\in A$. We introduce the finite set:
\begin{equation}
A\sqcup_a B:=A\sqcup B\setminus\{a\}.
\end{equation}
It is obvious that the partial compositions $\sqcup_a$ above are functorial, and endow the species of finite sets with an operad structure. Indeed, let $a,a'\in A$ with $a\neq a'$, let $b\in B$, and let $C$ be a third finite set. The parallel associativity axiom reads:
\begin{equation}
(A\sqcup_a B)\sqcup_{a'} C=(A\sqcup_{a'} C)\sqcup_{a} B=A\sqcup B\sqcup C\setminus\{a,a'\}
\end{equation}
and the nested associativity axiom reads:
\begin{equation}
(A\sqcup_a B)\sqcup_{b} C=A\sqcup_{a} (B\sqcup_{b} C)=A\sqcup B\sqcup C\setminus\{a,b\}.
\end{equation}
\subsection{Quotient posets}
Let $P$ be a set and $B\subseteq P$ be a subset. We denote by $P/B$ the set $P\setminus B\sqcup\{B\}$. 
Let $\Cal P=(P,\le)$ be a poset, and let $B$ a nonempty subset of $P$. We define a binary relation $\leqslant$ on $P/B$ as follows: $x\leqslant y$ if and only if:
\begin{itemize}
\item either $x=\{B\}$ and there exists $x'\in B$ such that $x'\le y$,
\item or $y=\{B\}$ and there exists $y'\in B$ such that $x\le y'$,
\item or $x,y\in P\setminus B$ and $x\le y$,
\item or $x,y\in P\setminus B$ and there exist $b,b'\in B$ such that $x\le b$ and $b'\le y$.
\end{itemize}
Recall that the interval $[a,b]$ in a poset $\Cal P=(P,\le)$ (with $a,b\in P$) is defined by:
$$[a,b]:=\{x\in P,\,a\le x\le b\}.$$
We say that $B$ is \textsl{convex} if any interval $[x,y]$ of $\Cal P$ with $x,y\in B$ is included in $B$.
\begin{prop}
Let $\Cal P=(P,\le)$ be a poset, let $B$ be a subset of $P$. Then the binary relation $\leqslant$ defined above is a partial order on $P/B$ if and only if $B$ is convex.
\end{prop}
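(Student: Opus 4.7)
My plan is to verify reflexivity unconditionally, then show that antisymmetry is equivalent to convexity, and finally show that convexity is sufficient for transitivity. Reflexivity of $\leqslant$ on elements of $P\setminus B$ follows from reflexivity of $\le$; for the class $\{B\}$ we take $\{B\}\leqslant\{B\}$ by convention (or directly from the defining bullets applied with $x'=y'$ any element of the nonempty set $B$).

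For the equivalence with convexity, I would first handle the easy direction: if $B$ is not convex, pick $b,b'\in B$ and $x\in P\setminus B$ with $b\le x\le b'$. Then the first bullet gives $\{B\}\leqslant x$ (witness $b$) and the second gives $x\leqslant\{B\}$ (witness $b'$), while $x\neq\{B\}$, so antisymmetry fails and $\leqslant$ is not a partial order. Conversely, assume $B$ convex and suppose $u\leqslant v$ and $v\leqslant u$. If $\{u,v\}=\{\{B\},x\}$ with $x\in P\setminus B$, then there exist $x',y'\in B$ with $x'\le x\le y'$, forcing $x\in B$ by convexity, a contradiction. If $u,v\in P\setminus B$, then a case analysis on which of the last two bullets provide $u\leqslant v$ and $v\leqslant u$ shows that in every case producing the fourth bullet on at least one side one obtains $b_1\le u\le b_2$ (or the analogous statement for $v$) with $b_1,b_2\in B$, again contradicting convexity; only the case where both inequalities come from the third bullet remains, and that gives $u=v$ by antisymmetry of $\le$.

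For transitivity under convexity, given $x\leqslant y$ and $y\leqslant z$ I would enumerate the possibilities according to which of $x,y,z$ equal $\{B\}$ and, when elements are in $P\setminus B$, which bullet provides the relation. The key observation recurring throughout the case analysis is that whenever the proof produces an intermediate chain $b\le y\le b'$ with $b,b'\in B$ and $y\in P\setminus B$, convexity of $B$ gives $y\in B$, contradicting $y\in P\setminus B$, so that case cannot arise. The remaining cases are routine: one simply transports the witnesses $b,b'$ from the composite bullets through the chain using transitivity of $\le$. For instance, if $x,y,z\in P\setminus B$, the ``third–fourth'' composition yields directly the witnesses for the fourth bullet giving $x\leqslant z$, while the ``fourth–fourth'' composition is eliminated by the convexity argument just described.

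The main obstacle is bookkeeping: the definition of $\leqslant$ splits into four bullets, so transitivity breaks into roughly ten subcases once one distinguishes whether the endpoints and middle element equal $\{B\}$, and one has to track carefully which combinations produce the forbidden chain $B\ni b\le y\le b'\in B$ with $y\notin B$. Once this pattern is identified, all subcases collapse to a handful of essentially identical arguments, so the proof is conceptually uniform even if notationally verbose.
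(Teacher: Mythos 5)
Your proposal is correct and follows essentially the same route as the paper's proof: the non-convex case is refuted by the same chain $\{B\}\leqslant z\leqslant\{B\}$ violating antisymmetry, and the convex case is handled by the same bullet-by-bullet case analysis, transporting witnesses for the composite relations and using convexity to exclude any chain $b\le y\le b'$ with $y\notin B$. Although you only sketch the transitivity case enumeration, the pattern you identify (including the elimination of the ``fourth--fourth'' case by convexity) is exactly how the paper's argument proceeds.
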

\begin{proof}
If $B$ is not convex, there exist $x,y,z\in P$ with $x,y\in B$, $z\notin B$ and $x\le z\le y$. By definition of $\leqslant$ we have then $\{B\}\leqslant z\leqslant\{B\}$, hence $\leqslant$ is not antisymmetric.\\

Now suppose that $B$ is convex. Let $x,y,z\in P/B$ with $x\leqslant z$ and $z\leqslant y$. If they are not distinct we immediately get $x\leqslant y$. If $x=\{B\}$, there exists $x'\in B$ with $x'\le z$. Two subcases can occur: If $z\le y$ then $x'\le y$, and then $x=\{B\}\leqslant y$. If there exist $b,b'\in B$ such that $z\le b$ and $b'\le y$, then $x'\le b$ and $b'\le y$, hence $x=\{B\}\leqslant y$ again. The cases $z=\{B\}$ and $y=\{B\}$ are treated similarly. The last case, when the three elements $x,z,y$ are different from $\{B\}$, divides itself into four subcases:
\begin{itemize}
\item if $x\le z$ and $z\le y$, then $x\le y$ hence $x\leqslant y$.
\item if there exist $b,b'\in B$ such that $x\le b$, $b'\le z$ and $z\le y$, then $x\le b$ and $b'\le y$, hence $x\leqslant y$.
\item if there exist $b'',b'''\in B$ such that $x\le z$, $z\le b''$ and $b'''\le y$, then $x\le b''$ and $b'''\le y$, hence $x\leqslant y$.
\item if there exist $b,b',b'',b'''\in B$ such that $x\le b$, $b'\le z$, $z\le b''$ and $b'''\le y$, then $z\in B$ by convexity, which contradicts the hypothesis: this last subcase cannot occur.
\end{itemize}
This proves the transitivity of the binary relation $\leqslant$. Now suppose $x\leqslant y$ and $y\leqslant x$. If $x=\{B\}$, and $y\neq x$, there are $b,b'\in B$ such that $b\le y$ and $y\le b'$. Hence $y\in B$ by convexity, which is a contradiction. The case $y=\{B\}$ is treated similarly. If both $x$ and $y$ are different from $\{B\}$, four subcases can occur:
\begin{itemize}
\item if $x\le y$ and $y\le x$, then $x=y$.
\item If there exist $b,b'\in B$ with $x\le b$, $b'\le y$ and $y\le x$, then $b'\le y\le x\le b$, hence $x,y\in B$ by convexity, which is impossible.
\item If there exist $b'',b'''\in B$ with $y\le b''$, $b'''\le x$ and $x\le y$, then $b'''\le x\le y\le b''$, hence $x,y\in B$ by convexity, which is impossible.
\item If there exist $b,b',b'',b'''\in B$ such that $x\le b$, $b'\le y$ and $y\le b''$, $b'''\le x$, then $y\in B$ by convexity, which is impossible.
\end{itemize}
Hence $\leqslant$ is a partial order.
\end{proof}
For any poset $\Cal P=(P,\le)$ and any convex subset of $P$, we will denote by $\Cal P/B$ the poset $(P/B,\leqslant)$ thus obtained.
\subsection{The first poset operad}\label{sect:poset-operad}
Let $\Cal A=(A,\le_A)$ and $\Cal B=(B,\le_B)$ be two finite posets, and let $a\in A$. We denote by $\Omega(\Cal A,a,\Cal B)$ the set of all partial orders $\le$ on $A\sqcup_a B$ such that:
\begin{enumerate}
\item $\le\restr{B}=\le_B$,
\item $B$ is convex in $(A\sqcup_a B,\le)$,
\item $(A\sqcup_a B,\le)/B\sim \Cal A$, where the set $(A\sqcup_a B)/B=(A\setminus\{a\})\sqcup \{B\}$ is naturally identified with $A$ by sending $B$ on $a$.
\end{enumerate}
The partial compositions of posets, which we denote by $\circ_a$, are then defined by:
\begin{equation}\label{poset-partial-composition}
\Cal A\circ_a\Cal B:=\sum_{\le\in\Omega(\Cal A,a,\Cal B)}(A\sqcup_a B,\le).
\end{equation}
For example, we have
$$\scalebox{1.2}{\tddeux{$a$}{$b$}}\circ_a\scalebox{1.2}{\tddeux{$1$}{$2$}}=\scalebox{1.2}{\tdtroisun {$1$}{$2$}{$b$}}+\scalebox{1.2}{\tdtroisdeux {$1$}{$2$}{$b$}}$$
and
$$\scalebox{1.2}{\tdtroisdeux{$a$}{$b$}{$c$}}\circ_b\scalebox{1.2}{\tddeux{$1$}{$2$}}=\scalebox{1.2}{\tdquatrecinq {$a$}{$1$}{$2$}{$c$}}+\scalebox{1.2}{\tdquatrequatre {$a$}{$1$}{$2$}{$c$}}+\ \scalebox{1.2}{\pdquatrequatre {$a$}{$1$}{$2$}{$c$}}+\ \scalebox{1.2}{\pdquatresept {$a$}{$1$}{$c$}{$2$}}+\scalebox{1.2}{\pdquatresix{$1$}{$a$}{$c$}{$2$}}\ .$$
Note that $a$ and $c$ are not comparable in the last term, but become comparable (namely $a\leqslant c$) when the subposet $\scalebox{1.2}{\tddeux{$1$}{$2$}}$ is shrunk.
\ignore{
\begin{lem}\label{poset-asso-par}
Let $\Cal A=(A,\le_A)$,  $\Cal B=(B,\le_B)$ and $\Cal C=(C,\le_C)$ be three finite posets, and let $a,a'\in A$. Then we have:
\begin{equation}\label{Omega-un}
\bigsqcup_{\le\in\Omega(\Cal A,a,\Cal B)}\Omega\big((A\sqcup_a B,\le),a',\Cal C\big)=
\bigsqcup_{\le\in\Omega(\Cal A,a',\Cal C)}\Omega\big((A\sqcup_{a'} C,\le),a,\Cal B\big).
\end{equation}
\end{lem}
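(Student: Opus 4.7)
The plan is to show that both sides of (\ref{Omega-un}) consist of the same partial orders on the set $E := A\sqcup B\sqcup C\setminus\{a,a'\}$, by identifying each with a manifestly symmetric subset $\wt\Omega\subseteq\{\text{partial orders on }E\}$. Since any $\le$ contributing to the left-hand disjoint union uniquely determines the corresponding $\le_1\in\Omega(\Cal A,a,\Cal B)$ as the quotient $\le/C$ (and symmetrically on the right), these disjoint unions can legitimately be viewed as sets of partial orders on $E$.

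Define
$$\wt\Omega := \Big\{\le \text{ partial order on } E \,\Big|\,\le\restr B = \le_B,\ \le\restr C = \le_C,\ B\text{ and }C\text{ are convex in }\le,\ (\le/C)/B\simeq\Cal A\Big\},$$
with the tacit identifications $\{B\}\leftrightarrow a$ and $\{C\}\leftrightarrow a'$. The inclusion LHS $\subseteq \wt\Omega$ is immediate from the axioms: a pair $(\le_1,\le)$ contributing to the left-hand side satisfies $\le\restr C = \le_C$, $C$ convex in $\le$ and $\le/C = \le_1$ from the outer $\Omega$, together with $\le_1\restr B = \le_B$, $B$ convex in $\le_1$ and $\le_1/B\simeq\Cal A$ from the inner $\Omega$. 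Then $\le\restr B = \le_B$ since $B$ and $C$ are disjoint, and convexity of $B$ in $\le$ follows by case analysis on any $z$ between $x,y\in B$: if $z\in C$ then $a'=\{C\}$ lies between $x$ and $y$ in $\le_1$, and if $z\in A\setminus\{a,a'\}$ then $z$ itself lies between them in $\le_1$, each contradicting convexity of $B$ in $\le_1$.

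Conversely, given $\le\in\wt\Omega$ and $\le_1 := \le/C$, the condition $\le_1/B\simeq\Cal A$ is built into the definition of $\wt\Omega$, so it remains to verify $\le_1\restr B = \le_B$ and convexity of $B$ in $\le_1$. Both reduce to excluding a ``shortcut chain'' $x\le c,\,c'\le y$ with $x,y\in B$ and $c,c'\in C$: projecting such a chain through the double quotient would give $a\le_{\Cal A}a'$ (from $x\le c$) and $a'\le_{\Cal A}a$ (from $c'\le y$), forcing $a=a'$ by antisymmetry of $\Cal A$, contrary to the assumption $a\neq a'$. By construction, $\le\in\Omega((A\sqcup_a B,\le_1),a',\Cal C)$, establishing $\wt\Omega \subseteq$ LHS.

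By the manifest symmetry $(B,a)\leftrightarrow(C,a')$, the right-hand side of (\ref{Omega-un}) is identified with the analogous set $\wt\Omega'$ in which $(\le/C)/B\simeq\Cal A$ is replaced by $(\le/B)/C\simeq\Cal A$. The main remaining step --- and the one I expect to be the main technical obstacle --- is the verification that $\wt\Omega=\wt\Omega'$, i.e., that the two iterated quotients $(\le/C)/B$ and $(\le/B)/C$ agree as posets on $A$ whenever $\le$ satisfies the restriction and convexity conditions. Unpacking the defining chains for the two orders, the required equality amounts to excluding chains alternating between $B$ and $C$ more than once; the same antisymmetry argument as above once again rules these out, whence $\wt\Omega=\wt\Omega'$ and the lemma follows.
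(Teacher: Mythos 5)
Your proposal is correct and follows the same overall strategy as the paper: both iterated disjoint unions are identified with one intrinsically described set of partial orders on $A\sqcup B\sqcup C\setminus\{a,a'\}$, which is then seen to be symmetric under $(B,a)\leftrightarrow(C,a')$. The difference lies in how that common set is described. The paper characterizes it by $\le\restr{B}=\le_B$, $\le\restr{C}=\le_C$, and the condition that the \emph{simultaneous} quotient collapsing $B$ to $a$ and $C$ to $a'$ is isomorphic to $\Cal A$; this is manifestly symmetric, and convexity of $B$ and $C$ need not be listed since it follows from antisymmetry of the quotient order. You instead impose the iterated quotient condition $(\le/C)/B\simeq\Cal A$, which is not symmetric, so you must add the final step $\wt\Omega=\wt\Omega'$, i.e. $(\le/C)/B=(\le/B)/C$. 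That step does go through exactly as you indicate: the two iterated quotients can only differ by relations witnessed by chains passing through $B$ and $C$ with both a link $b\le c$ and a link $c'\le b'$ ($b,b'\in B$, $c,c'\in C$), and such a pair of links yields $a\le_A a'$ and $a'\le_A a$ in the quotient already known to be isomorphic to $\Cal A$, contradicting $a\neq a'$; so your route is sound, just with one verification the simultaneous-quotient formulation avoids. One small repair: the sentence ``$\le\restr B=\le_B$ since $B$ and $C$ are disjoint'' is too quick --- disjointness only gives $\le\restr B\subseteq\le_1\restr B=\le_B$; the reverse inclusion requires excluding a shortcut $x\le c$, $c'\le y$ with $x,y\in B$, $c,c'\in C$, which follows from convexity of $B$ in $\le_1$ (the element $a'=\{C\}$ would lie between $x$ and $y$), i.e. the same argument you already use for convexity of $B$ in $\le$.
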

\begin{proof}
\redtext{preuve \`a d\'etailler} Both members of \eqref{Omega-un} coincide with the set of all partial orders $\le$ on $(A\sqcup_a B)\sqcup_{a'} C$ such that:
\begin{enumerate}
\item $\le\restr{B}=\le_B$,
\item $\le\restr{C}=\le_C$,
\item $\big((A\sqcup_a B)\sqcup_{a'} C,\,\le\big)/(B\to\hskip -3mm\to \{a\}, C\to\hskip -3mm\to \{a'\})\sim \Cal A$.
\end{enumerate}
\end{proof}
\noindent The set defined by both members of \eqref{Omega-un} will be denoted by $\Omega(\Cal A, a,a',\Cal B,\Cal C)$.
\begin{lem}\label{poset-asso-emb}
Let $\Cal A=(A,\le_A)$,  $\Cal B=(B,\le_B)$ and $\Cal C=(C,\le_C)$ be three finite posets, and let $a\in A$, $b\in B$. Then we have:
\begin{equation}\label{Omega-deux}
\bigsqcup_{\le\in\Omega(\Cal B,b,\Cal C)}\Omega\big(\Cal A, a, (B\sqcup_b C,\le)\big)=
\bigsqcup_{\le\in\Omega(\Cal A,a,\Cal B)}\Omega\big((A\sqcup_{a} B,\le),b,\Cal C\big).
\end{equation}
\end{lem}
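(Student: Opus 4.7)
The plan is to identify both sides of \eqref{Omega-deux} with a single explicit set $\widetilde\Omega=\widetilde\Omega(\Cal A,a,\Cal B,b,\Cal C)$ of partial orders $\preceq$ on the common underlying set $X:=A\sqcup B\sqcup C\setminus\{a,b\}$, defined by the following five conditions:
(i)~$\preceq\restr{C}=\le_C$;
(ii)~$C$ is convex in $(X,\preceq)$;
(iii)~$B\sqcup C\setminus\{b\}$ is convex in $(X,\preceq)$;
(iv)~the quotient of the induced order on $B\sqcup C\setminus\{b\}$ by $C$ (identifying $\{C\}$ with $b$) equals $\Cal B$;
(v)~the quotient of $(X,\preceq)$ by $B\sqcup C\setminus\{b\}$ (identifying the class with $a$) equals $\Cal A$.

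To show that the LHS equals $\widetilde\Omega$: given $\preceq$ appearing in the LHS disjoint union, the witness $\le\in\Omega(\Cal B,b,\Cal C)$ is forced to be $\preceq\restr{B\sqcup_b C}$, so the union merely enumerates certain orders on $X$. The outer membership $\preceq\in\Omega\big(\Cal A,a,(B\sqcup_b C,\le)\big)$ directly provides (iii), (v), and $\preceq\restr{B\sqcup_b C}=\le$; the inner membership $\le\in\Omega(\Cal B,b,\Cal C)$ provides (i), (iv), and convexity of $C$ in $(B\sqcup_b C,\le)$. Combined with (iii), transitivity of convexity along nested subsets yields (ii). Conversely any $\preceq\in\widetilde\Omega$ plainly lies in the LHS with $\le:=\preceq\restr{B\sqcup_b C}$.

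To show that the RHS also equals $\widetilde\Omega$, I would invoke two general lemmas on iterated quotients of a poset $(X,\preceq)$ along a nested pair $C\subseteq B\sqcup C\setminus\{b\}$ of convex subsets: the associativity of quotients
$$(X/C)/B\ =\ X/(B\sqcup C\setminus\{b\})\quad\text{as partial orders on }A,$$
and the transfer of convexity
$$B\text{ convex in }(X/C,\preceq/C)\ \Longleftrightarrow\ B\sqcup C\setminus\{b\}\text{ convex in }(X,\preceq),$$
valid whenever $C$ is convex in $X$. Granted these, an element of the RHS is a $\preceq$ on $X$ satisfying $\preceq\restr{C}=\le_C$ (giving (i)), $C$ convex in $X$ (giving (ii)), and $\le':=\preceq/C\in\Omega(\Cal A,a,\Cal B)$. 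The three defining conditions of $\le'\in\Omega(\Cal A,a,\Cal B)$ unpack, via the two lemmas together with the identification $\le'\restr{B}=\preceq\restr{B\sqcup C\setminus\{b\}}/C$, into exactly (iii), (iv), and (v). The converse direction follows by reversing the argument.

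The main obstacle is the verification of the two lemmas, both of which I expect to handle by a systematic case analysis starting from the four-case definition of the quotient order given in Section 2.2: iterating produces up to sixteen subcases depending on whether intermediate chains thread through $C$ or not, and one must use convexity of $C$ in $X$ to rule out the spurious ones. Once these are in place, both members of \eqref{Omega-deux} unpack to $\widetilde\Omega$ and the lemma follows.
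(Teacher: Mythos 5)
Your proof is correct and follows essentially the same route as the paper's own (sketched) argument: both disjoint unions are identified with one explicit set of partial orders on $A\sqcup_a B\sqcup_b C$, characterized by restricting to $\le_C$ on $C$ and by the nested quotient (and convexity) conditions recovering $\Cal B$ and then $\Cal A$. The two auxiliary facts you isolate --- associativity of quotients along nested convex subsets and the transfer of convexity through a quotient --- are indeed true, and they are exactly the routine case-by-case verifications that the paper leaves implicit.
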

\begin{proof}
\redtext{preuve \`a d\'etailler} Both members of \eqref{Omega-deux} coincide with the set of all partial orders $\le$ on $A\sqcup_a B\sqcup_{b} C$ such that:
\begin{enumerate}
\item $\le\restr{C}=\le_C$,
\item $(B\sqcup_b C,\le\restr{B\sqcup_b C})/(C\to\hskip -3mm\to \{b\})\sim\Cal B$,
\item $\big(A\sqcup_a B\sqcup_{b} C,\,\le\big)/(B\sqcup_b C\to\hskip -3mm\to \{a\})\sim \Cal A$.
\end{enumerate}
\end{proof}
\noindent The set defined by both members of \eqref{Omega-deux} will be denoted by $\Omega(\Cal A, a,\Cal B,b,\Cal C)$.
}
\begin{thm}\label{poset-operad-main}
The partial compositions \eqref{poset-partial-composition} define an operad structure on the species of finite posets.
\end{thm}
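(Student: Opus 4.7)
Definition \eqref{poset-partial-composition} is manifestly natural under relabelling, so the partial compositions define a morphism of species. The unit is the singleton poset $\un=(\{*\},=)$: in both $\un\circ_*\Cal A$ and $\Cal A\circ_a\un$ the set $\Omega$ reduces to a single element giving back $\Cal A$, so it remains only to verify the two associativity axioms, which I address by expanding each side as a sum over an explicit common set of partial orders on a triple disjoint union.

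\textbf{Parallel associativity.} Fix distinct $a,a'\in A$ and set $E=A\sqcup_a B\sqcup_{a'}C$. I plan to show that both $(\Cal A\circ_a\Cal B)\circ_{a'}\Cal C$ and $(\Cal A\circ_{a'}\Cal C)\circ_a\Cal B$ equal the sum $\sum_\le(E,\le)$ taken over the set $\Omega_\parallel$ of partial orders $\le$ on $E$ satisfying (P1) $\le|_B=\le_B$ and $\le|_C=\le_C$; (P2) both $B$ and $C$ are convex in $(E,\le)$; and (P3) the iterated quotient contracting $B$ to $a$ and $C$ to $a'$ is isomorphic to $\Cal A$. Both identifications are a direct unfolding of the definition, using crucially that $B\cap C=\emptyset$: the two contractions commute, each preserves the convexity of the other subset, and the conditions assembled by the successive inner and outer compositions add up to exactly (P1)--(P3).

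\textbf{Nested associativity.} Fix $a\in A$, $b\in B$, and set $E=A\sqcup_a B\sqcup_b C$. I plan to show both $(\Cal A\circ_a\Cal B)\circ_b\Cal C$ and $\Cal A\circ_a(\Cal B\circ_b\Cal C)$ equal $\sum_\le(E,\le)$ indexed by the set $\Omega_\mathrm{nest}$ of orders $\le$ on $E$ such that (N1) $\le|_C=\le_C$; (N2) $B\sqcup_b C$ is convex in $(E,\le)$; (N3) $(E,\le)/(B\sqcup_b C)\cong\Cal A$ when $[B\sqcup_b C]$ is identified with $a$; and (N4) $\bigl(B\sqcup_b C,\le|_{B\sqcup_b C}\bigr)/C\cong\Cal B$ when $[C]$ is identified with $b$. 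The identification of $\Cal A\circ_a(\Cal B\circ_b\Cal C)$ with this sum is a direct unfolding of the definition. The identification of $(\Cal A\circ_a\Cal B)\circ_b\Cal C$ with it rests on two auxiliary facts that I would prove first: \emph{(A)} for nested subsets $C\subseteq B\sqcup C\subseteq E$ in a poset $(E,\le)$, the subset $B\sqcup C$ is convex in $(E,\le)$ if and only if $C$ is convex in $(E,\le)$ and $(B\sqcup C)/C$ is convex in $(E,\le)/C$; \emph{(B)} under these equivalent conditions, there is a canonical poset isomorphism $((E,\le)/C)/((B\sqcup C)/C)\cong(E,\le)/(B\sqcup C)$. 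Together (A) and (B) translate the iterative construction (first pick an order in $\Omega(\Cal A,a,\Cal B)$ on $A\sqcup_a B$, then extend it to $E$ by inserting $\Cal C$ at $b$) into the single description (N1)--(N4).

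\textbf{Main obstacle.} The technical core is Lemma (A); Lemma (B) is routine bookkeeping on equivalence classes once (A) is granted. The proof of (A) is a case analysis closely modeled on that of the preceding proposition characterizing when the relation $\leqslant$ on $P/B$ is antisymmetric: for each pair of elements of $B\sqcup C$ and each witnessing chain $x_0\le\cdots\le x_k$ in $E$, one classifies the intermediate vertices according to whether they lie in $B$, in $C$, or outside $B\sqcup C$, and uses convexity to lift or descend the chain between the one-step quotient $(E,\le)/C$ and the two-step quotient $(E,\le)/(B\sqcup C)$.
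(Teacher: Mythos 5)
Your route is genuinely different from the paper's. The paper never verifies the associativity axioms for $\circ$ directly: it first proves (Theorem \ref{poset-operad-set}, via ``saturation by stages'') that the single-term compositions $\bullet_a$ form a set operad, and then shows $\Omega'(\mathcal{A},a,\mathcal{B})=\Omega''(\mathcal{A},a,\mathcal{B})$, i.e.\ $\Phi(\mathcal{A})\circ_a\Phi(\mathcal{B})=\Phi(\mathcal{A}\bullet_a\mathcal{B})$ for the species automorphism $\Phi(\mathcal{A})=\sum_{\mathcal{A}'\preceq\mathcal{A}}\mathcal{A}'$; since $\Phi$ is invertible, the axioms for $\circ$ follow by transport of structure, and one gets the isomorphism $(\mathbb{P},\circ)\simeq(\mathbb{P},\bullet)$ for free. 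Your plan is the direct one: identify each side of the two associativity relations with a multiplicity-free sum over a single explicitly described set of partial orders on the triple union (multiplicity-freeness does hold, since the intermediate order is recovered from $\le$ as a quotient, resp.\ a restriction). This can be pushed through, but it is exactly the delicate bookkeeping the paper chose to avoid.

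There is, however, a concrete flaw at what you yourself call the technical core. Lemma (A) is false as stated: take $E$ a three-element chain $x<z<y$, $C=\{x,y\}$, $B=\{z\}$; then $B\sqcup C=E$ is convex in $E$ while $C$ is not, so the forward implication fails. The correct statement is an equivalence of conjunctions: ``$B\sqcup C$ is convex in $E$ \emph{and} $C$ is convex in $(B\sqcup C,\le|_{B\sqcup C})$'' if and only if ``$C$ is convex in $E$ and $(B\sqcup C)/C$ is convex in $E/C$''. The missing hypothesis is harmless in your application, because (N4) already presupposes that $(B\sqcup_b C,\le|_{B\sqcup_b C})/C$ is a poset, which by the paper's first proposition forces $C$ to be convex in the restriction; but the lemma as you state it would not survive your planned case analysis. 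The same caveat hits the parallel case: ``each contraction preserves the convexity of the other subset'' is not true. With $B=\{x,y\}$ and $C=\{c,c'\}$ antichains and the only strict relations $x\le c$, $c'\le y$, both $B$ and $C$ are convex in $E$, yet the image of $B$ is not convex in $E/C$ (one gets $x\le[C]\le y$), and the iterated contraction even fails antisymmetry. So (P1)--(P2) alone do not make (P3) meaningful: you must either add ``the image of $B$ is convex in $E/C$'' to the common description (or phrase (P3) via a well-defined simultaneous quotient), and then do the real work of showing that each side's full package of conditions implies the other's -- the key point being that convexity of $B$ in the quotient rules out ``crossings'' $x\le c$, $c'\le y$ with $x,y\in B$, $c,c'\in C$ but $x\not\le y$, which is also what guarantees $\le|_B=\le_B$ on the side where it is not assumed. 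With these corrections your argument goes through; as it stands, the two auxiliary claims it leans on are stated in a form that is false in general.
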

\noindent We postpone the proof to Paragraph \ref{sect:proof} in the next section.
\section{The set triple operad of finite connected posets}
We will define set-theoretical partial compositions by retaining one privileged term in the sum \eqref{poset-partial-composition} defining the partial compositions $\circ_a$. It turns out that three different procedures are available leading to three families of partial compositions $\bullet_a$, $\blacktriangledown_a$ and $\blacktriangle_a$, providing the species of finite posets with three distinct set operad structures.\\

For any two finite posets $\Cal A=(A,\le_A)$ and $\Cal B=(B,\le_B)$, and $a\in A$, the partial order on $\Cal A\bullet_a\Cal B$ is obtained from $\le_A$ and $\le_B$ by saturation, and the operad axioms can be deduced from saturation by stages. We give an account of this general phenomenon in Paragraph \ref{sect:saturation} below before looking at the partial compositions $\bullet_a$ themselves.\\

The operad axioms for the family $\blacktriangledown_a$, and similarly for $\blacktriangle_a$, are obtained by a direct check, which can also be used with very few modifications for $\bullet_a$.
\subsection{Saturation of relations}\label{sect:saturation}
Let $E$ be a set. Let $\sim$ be an equivalence relation on $E$, let $\Cal R$ be a binary relation on $E/\sim$ and let $\Cal S$ be a binary relation on $E$. The class of $e\in E$ in $E/\sim$ will be denoted by $[e]$. Let $\Cal T$ be the relation on $E$ defined by $e\Cal T e'$ if and only if, either $[e]\neq [e']$ and $[e]\Cal R[e']$, or $[e]= [e']$ and $e\Cal S e'$.
\begin{prop}\label{saturation-orders}
If the two binary relations $\Cal R$ and $\Cal S$ above are partial orders, then $\Cal T$ is also a partial order.
\end{prop}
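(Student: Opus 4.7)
The plan is to verify reflexivity, antisymmetry, and transitivity of $\Cal T$ directly from the definition, handling separately the two clauses "$[e]\ne[e']$ with $[e]\Cal R[e']$" and "$[e]=[e']$ with $e\Cal S e'$". The main input is the antisymmetry of $\Cal R$ on $E/\sim$, which is what prevents the two clauses from interfering with each other.

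Reflexivity is immediate: for any $e\in E$ we have $[e]=[e]$ and $e\Cal S e$ by reflexivity of $\Cal S$, so $e\Cal T e$. For antisymmetry, assume $e\Cal T e'$ and $e'\Cal T e$. If $[e]\ne[e']$, then $[e]\Cal R[e']$ and $[e']\Cal R[e]$, which contradicts antisymmetry of $\Cal R$; hence $[e]=[e']$, and then $e\Cal S e'$ together with $e'\Cal S e$ gives $e=e'$ by antisymmetry of $\Cal S$.

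The transitivity argument is a four-case analysis according to the equality or inequality of $[e],[e'],[e'']$. If all three classes are pairwise distinct, $[e]\Cal R[e']\Cal R[e'']$ gives $[e]\Cal R[e'']$ by transitivity of $\Cal R$; moreover $[e]=[e'']$ is ruled out because combined with $[e]\Cal R[e']$ and $[e']\Cal R[e'']=[e]$ it would force $[e]=[e']$ by antisymmetry of $\Cal R$. If exactly one of the intermediate equalities $[e]=[e']$ or $[e']=[e'']$ holds (but the other fails), one rewrites indices and obtains $[e]\ne[e'']$ and $[e]\Cal R[e'']$, so $e\Cal T e''$. If $[e]=[e']=[e'']$, then $e\Cal S e'\Cal S e''$ and transitivity of $\Cal S$ yields $e\Cal S e''$, so $e\Cal T e''$ via the second clause.

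The step I expect to require the most care is the first subcase of transitivity, where ruling out $[e]=[e'']$ in the three-distinct-classes situation is precisely where the antisymmetry of $\Cal R$ is used in a non-obvious way; beyond that point the argument is a routine bookkeeping of cases. No hypotheses beyond those of the statement are needed, and in particular the relation $\Cal S$ intervenes only through its restriction to single $\sim$-classes.
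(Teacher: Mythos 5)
Your proof is correct and follows essentially the same route as the paper: direct verification of reflexivity, transitivity, and antisymmetry by cases on the equality pattern of $[e],[e'],[e'']$, with antisymmetry of $\Cal R$ used exactly where the paper uses it, namely to exclude $[e]=[e'']$ in the case $[e]\neq[e']$, $[e']\neq[e'']$ (so your label ``pairwise distinct'' for that case is slightly loose, but the argument you give handles it correctly).
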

\begin{proof}
Reflexivity is obvious. Now let $e,e',e''\in E$ such that $e\Cal T e'$ and $e'\Cal T e''$. Several cases can occur:
\begin{enumerate}
\item If $[e]=[e']=[e'']$, then $e\Cal S e'$ and $e'\Cal S e''$, hence $e\Cal S e''$, which yields $e\Cal T e''$.
\item If $[e]=[e']\neq[e'']$, then $[e']\Cal R [e'']$. Hence $[e]\Cal R [e'']$, which yields $e\Cal T e''$. 
Similarly for $[e]\neq[e']=[e'']$.
\item If $[e]\neq[e']$ and $[e']\neq[e'']$, then $[e]\Cal R[e']$ and $[e']\Cal R[e'']$, hence $[e]\Cal R [e'']$.
Moreover, $[e]\neq [e'']$: otherwise, we would have $[e]=[e']=[e'']$, as $\Cal R$ is a partial order. This yields $e\Cal T e''$.
\end{enumerate}
This proves transitivity of $\Cal T$. Finally, if $e\Cal T e'$ and $e'\Cal T e$ then:
\begin{enumerate}
\item If $[e]=[e']$, then $e\Cal S e'$ and $e'\Cal S e$, hence $e=e'$ by antisymmetry of $\Cal S$.
\item If $[e]\neq[e']$, then $[e]\Cal R [e']$ and $[e']\Cal R [e]$, contradiction.
\end{enumerate}
This proves antisymmetry of $\Cal T$.
\end{proof}
\begin{prop}[Saturation by stages]\label{saturation-by-stages}
Let $E$ be a set. Let $\sim$ be an equivalence relation on $E$, let $\approx$ be an equivalence relation on $E$ finer than $\sim$, and let $\simeq$ be the equivalence relation on $E/\approx$ deduced from $\sim$. The quotient $E/\sim$ is in bijection with $(E/\approx)/\simeq$. Let
\begin{itemize}
\item $\Cal R$ be a binary relation on $E/\sim\,=\,(E/\approx)/\simeq$,
\item $\Cal S$ be a binary relation on $E/\approx$,
\item $\Cal T$ be the relation on $E/\approx$ obtained from $\Cal R$ and $\Cal S$ by saturation as explained above,
\item $\Cal S'$ be a binary relation on $E$,
\item $\Cal S''$ be the binary relation on $E$ obtained from $\Cal S$ and $\Cal S'$ by saturation,
\item $\Cal T'$ be the relation on $E$ obtained from $\Cal T$ and $\Cal S'$ by saturation.
\end{itemize}
Then $\Cal T'$ is also obtained from $\Cal R$ and $\Cal S''$ by saturation.
\end{prop}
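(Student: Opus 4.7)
The plan is to verify that the relations $\mathcal T'$ defined via the two saturation processes coincide pointwise on $E\times E$, by a direct case analysis on the pair $(e,e')$ according to the coincidences of their classes modulo $\approx$ and modulo $\sim$. Since $\approx$ is finer than $\sim$, equality of $\approx$-classes forces equality of $\sim$-classes, so only three cases arise: (i) $[e]_\approx=[e']_\approx$, (ii) $[e]_\approx\neq[e']_\approx$ but $[e]_\sim=[e']_\sim$, and (iii) $[e]_\sim\neq[e']_\sim$. Denote by $\widetilde{\mathcal T}$ the binary relation on $E$ obtained from $\mathcal R$ and $\mathcal S''$ by saturation along $\sim$; we must show $\mathcal T'=\widetilde{\mathcal T}$.

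In each case I would simply unfold the definition of saturation twice on each side. In case (i), the definition of $\mathcal T'$ gives $e\mathcal T' e'\iff e\mathcal S'e'$, and likewise $e\widetilde{\mathcal T} e'\iff e\mathcal S''e'\iff e\mathcal S'e'$ because $[e]_\approx=[e']_\approx$ forces us to the second clause in the definition of $\mathcal S''$. In case (ii), $e\mathcal T' e'\iff [e]_\approx\mathcal T[e']_\approx$, which — because $[e]_\approx$ and $[e']_\approx$ have the same $\simeq$-class — reduces to $[e]_\approx\mathcal S[e']_\approx$; on the other hand, since $[e]_\sim=[e']_\sim$ and $[e]_\approx\neq[e']_\approx$, $e\widetilde{\mathcal T}e'\iff e\mathcal S''e'\iff [e]_\approx\mathcal S[e']_\approx$. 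In case (iii), $e\mathcal T' e'\iff [e]_\approx\mathcal T[e']_\approx\iff [e]_\sim\mathcal R[e']_\sim$ via the first clause of the definition of $\mathcal T$, and directly $e\widetilde{\mathcal T}e'\iff [e]_\sim\mathcal R[e']_\sim$. In every case the two conditions coincide, which establishes $\mathcal T'=\widetilde{\mathcal T}$.

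The one point requiring slight care is the identification $E/\sim\,=\,(E/\approx)/\simeq$: given $e\in E$, its class $[e]_\sim$ corresponds to the $\simeq$-class of $[e]_\approx$, so ``$[e]_\sim\neq[e']_\sim$'' translates to ``the $\simeq$-classes of $[e]_\approx$ and $[e']_\approx$ differ'' when applying the definition of $\mathcal T$ at the level of $E/\approx$. Once this bookkeeping is made explicit, the three cases above are immediate from the definitions, and there is no substantial obstacle: the proposition is essentially a tautological verification that the two-step saturation is well-defined regardless of the order in which one passes to quotients. The virtue of isolating this lemma is precisely that it will allow us to compare iterated saturations in the proofs of the operad axioms for the set-theoretic composition $\bullet_a$ without repeating the case analysis each time.
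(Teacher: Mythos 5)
Your proof is correct and follows essentially the same route as the paper: both unfold the two nested saturations into a case analysis on whether the $\approx$-classes and $\sim$-classes of $e,e'$ coincide (using that $\approx$ finer than $\sim$ leaves only three cases) and observe that the resulting conditions match the saturation of $\Cal R$ and $\Cal S''$ along $\sim$. The only cosmetic difference is that the paper regroups the three resulting clauses into two rather than treating them as separate cases.
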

\begin{proof}
For any $e\in E$, we will denote by $[e]$ the class of $e$ for the relation $\approx$, and by $[[e]]$ the class of $e$ for $\sim$, which is the same as the class of $[e]$ for $\simeq$. We have $e\Cal T' e'$ if and only if either $[e]=[e']$ and $e\Cal S' e'$, or $[e]\neq[e']$ and $[e]\Cal T [e']$. Using the definition of $\Cal T$ we have that $e\Cal T' e'$ if and only if:
\begin{itemize}
\item either $[e]=[e']$ and $e\Cal S' e'$,
\item or $[e]\neq[e']$, $[[e]]=[[e']]$ and $[e]\Cal S [e']$,
\item or $[[e]]\neq [[e']]$ and $[[e]]\Cal R [[e']]$.
\end{itemize}
Hence $e\Cal T' e'$ if and only if:
\begin{itemize}
\item either $[[e]]=[[e']]$ and $e\Cal S'' e'$,
\item or $[[e]]\neq [[e']]$ and $[[e]]\Cal R [[e']]$,
\end{itemize}
which proves the claim.
\end{proof}
\subsection{The first set operad structure}
Let $\Cal A=(A,\le_A)$ and $\Cal B=(B,\le_B)$ be two finite posets, and let $a\in A$. Then we define:
\begin{equation}
\Cal A\bullet_a\Cal B:=(A\sqcup_a B,\le),
\end{equation}
where $\le$ is obtained from $\le_A$ on $A=A\sqcup_a B/\sim$ and $\le_B$ on $A\sqcup B$ by the saturation process described in Paragraph \ref{sect:saturation}. Here the equivalence relation $\sim$ is obtained by identifying all elements of $B$ with $a$, and the partial order $\le_B$ is trivially extended to $A\sqcup_a B$, i.e. $x\le y$ if and only if $x=y$ in case $x$ or $y$ does not belong to $B$. The relation $\le$ is a partial order by Proposition \ref{saturation-orders}. To sum up, for any $x,y\in A\sqcup_a B$, $x\le y$ if and only if:
\begin{itemize}
\item either $x,y\in B$ and $x\le_B y$,
\item or $x,y\in A\setminus\{a\}$ and $x\le_A y$,
\item or $x\in A\setminus\{a\}$, $y\in B$ and $x\le_A a$,
\item or $x\in B$, $y\in A\setminus\{a\}$ and $a\le_A y$.
\end{itemize}
for exemple, we have:
\begin{align*}
\scalebox{1.2}{\tdtroisdeux{$a$}{$b$}{$c$}}\bullet_b\scalebox{1.2}{\tddeux{$1$}{$2$}}&=\scalebox{1.2}{\tdquatrecinq {$a$}{$1$}{$2$}{$c$}},\\
\scalebox{1.2}{\tdtroisdeux{$a$}{$b$}{$c$}}\bullet_b\scalebox{1.2}{\tdun{$1$}\tdun{$2$}}&=\scalebox{1.2}{\pdquatrehuit{$a$}{$c$}{$1$}{$2$}}.
\end{align*}
\begin{rmk}
Note that the partial order $\le$ defined above is an element of the set $\Omega(A,a,B)$ defined in Paragraph \ref{sect:poset-operad}.
\end{rmk}
\begin{thm}\label{poset-operad-set}
The partial compositions $\bullet_a$ endow the set species of finite posets with a structure of operad.
\end{thm}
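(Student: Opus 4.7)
The plan is to verify the two associativity axioms of an operad (nested and parallel); equivariance under relabelling bijections is transparent from the functorial construction on finite sets and need not be checked separately.

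For nested associativity, with $a\in A$ and $b\in B$, I would apply Proposition \ref{saturation-by-stages} directly on $E=A\sqcup_a B\sqcup_b C$. Take $\approx$ to be the equivalence on $E$ identifying all of $C$ to $b$, and $\sim$ the coarser equivalence identifying all of $B\sqcup_b C$ to $a$. Then $E/\approx = A\sqcup_a B$ and $E/\sim = A$, so the hypotheses of the proposition are met. Setting $\Cal R=\le_A$ on $E/\sim$, $\Cal S=\le_B$ on $E/\approx$ (trivially extended by equality on $A\setminus\{a\}$), and $\Cal S'=\le_C$ on $E$ (trivially extended), the very definition of $\bullet_a$ identifies the saturation $\Cal T$ of $(\Cal R,\Cal S)$ with the partial order of $\Cal A\bullet_a\Cal B$, and the saturation $\Cal S''$ of $(\Cal S,\Cal S')$ with the partial order of $\Cal B\bullet_b\Cal C$ (extended trivially). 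The conclusion of Proposition \ref{saturation-by-stages} then gives $(\Cal A\bullet_a\Cal B)\bullet_b\Cal C=\Cal A\bullet_a(\Cal B\bullet_b\Cal C)$.

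For parallel associativity with $a,a'\in A$, $a\neq a'$, Proposition \ref{saturation-by-stages} does not apply verbatim, since the two collapses are parallel rather than nested. My preferred approach is to invoke the saturation-by-stages lemma in a two-step way: on $E=A\sqcup_a B\sqcup_{a'} C$, let $\approx$ identify only $C$ (to $a'$), and let $\sim$ identify $B$ and $C$ separately (to $a$ and $a'$ respectively). Then $\approx$ is finer than $\sim$; applying the lemma with $\Cal R=\le_A$, $\Cal S=\le_B$ on $E/\approx$, and $\Cal S'=\le_C$ on $E$ expresses $(\Cal A\bullet_a\Cal B)\bullet_{a'}\Cal C$ as the saturation of $\le_A$ on $E/\sim=A$ with the trivially extended disjoint-union order $\le_B\sqcup\le_C$ on $E$. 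A symmetric argument, swapping the roles of $B$ and $C$ (and of $a$ and $a'$), realises $(\Cal A\bullet_{a'}\Cal C)\bullet_a\Cal B$ as the saturation of $\le_A$ with the same order $\le_B\sqcup\le_C$, and equality of the two sides follows. As a backup, one can always verify the identity by a direct case analysis: both sides have underlying set $A\sqcup_a B\sqcup_{a'} C$, and one checks by splitting on whether $x,y$ lie in $A\setminus\{a,a'\}$, $B$ or $C$ that $x\le y$ is characterised by the same list of cases on each side.

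The main obstacle is the parallel axiom, as the saturation-by-stages machinery is tailored to nested collapses; the key is to recognise that the auxiliary equivalence relation $\sim$ must identify $B$ and $C$ into \emph{separate} classes, not into a single class, so that after the two stages one really recovers $\bullet_{a'}$ composed with $\bullet_a$. Once this is correctly set up, both the nested and the parallel axioms reduce to a single application of Proposition \ref{saturation-by-stages}, and the theorem follows.
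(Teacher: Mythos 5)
Your proposal is correct and follows the paper's own route: the nested associativity axiom is obtained from Proposition \ref{saturation-by-stages} with exactly the dictionary ($\Cal R=\le_A$, $\Cal S=\le_B$ extended, $\Cal S'=\le_C$ extended, $\Cal T$ the order of $\Cal A\bullet_a\Cal B$, $\Cal S''$ that of $\Cal B\bullet_b\Cal C$, $\Cal T'$ that of $(\Cal A\bullet_a\Cal B)\bullet_b\Cal C$) used in the paper. For the parallel axiom, which the paper declares ``similar and left to the reader'' (and also covers by the direct case-check in the remark following the $\blacktriangledown$ theorem), your symmetric double application of the same proposition, realising both $(\Cal A\bullet_a\Cal B)\bullet_{a'}\Cal C$ and $(\Cal A\bullet_{a'}\Cal C)\bullet_a\Cal B$ as the saturation of $\le_A$ with the trivially extended disjoint-union order $\le_B\sqcup\le_C$, is a valid completion in the same spirit.
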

\begin{proof}
Partial composition of two posets is a single poset, and functoriality with respect to relabelling is obvious. It remains to check the two associativity axioms. The nested associativity axiom is directly derived from Proposition \ref{saturation-by-stages}, with $E=A\sqcup_b B\sqcup_b C$. The equivalence relation $\approx$(resp. $\sim$) is obtained by shrinking $C$ to $\{b\}$ (resp. $B\sqcup_b C$ to $\{a\}$). The set $E/\approx$ is identified with $A\sqcup_a B$, and $\simeq$ is obtained by shrinking $B$ on $\{a\}$. It remains to apply Proposition \ref{saturation-by-stages} with the following dictionary:
\begin{itemize}
\item $\Cal R$: partial order $\le_A$ on $A=E/\sim$.
\item $\Cal S$: partial order $\le_B$ trivially extended on $E/\approx\,=A\sqcup_a B$.
\item $\Cal T$: partial order of $\Cal A\bullet_b\Cal B$ on $E/\approx\,=A\sqcup_a B$.
\item $\Cal S'$: partial order $\le_C$ trivially extended to $E$,
\item $\Cal S''$: partial order of $\Cal B\bullet_b\Cal C$ trivially extended from $B\sqcup_b C$ to $E$. 
\item $\Cal T'$: partial order of $(\Cal A\bullet_a\Cal B)\bullet_b\Cal C$.
\end{itemize}
\noindent The proof of the parallel associativity is similar and left to the reader.
\end{proof}
\subsection{Two more set operad structures}
We define two families of partial compositions $\blacktriangle_a$ and $\blacktriangledown_a$ as follows: for two finite posets $\Cal A=(A,\le_A)$ and $\Cal B=(B,\le_B)$ and $a\in A$, the poset $\Cal A\blacktriangle_a\Cal B$ is the set $A\sqcup_a B$ together with the partial order $\le$ defined by $x\le y$ if and only if:
\begin{itemize}
\item either $x,y\in B$ and $x\le_B y$,
\item or $x,y\in A\setminus\{a\}$ and $x\le_A y$,
\item or $x\in A\setminus\{a\}$, $y\in \mop{max}B$ and $x\le_A a$,
\item or $x\in B$, $y\in A\setminus\{a\}$ and $a\le_A y$.
\end{itemize}
Similarly, the poset $\Cal A\blacktriangledown_a\Cal B$ is the set $A\sqcup_a B$ together with the partial order $\le$ defined by $x\le y$ if and only if:
\begin{itemize}
\item either $x,y\in B$ and $x\le_B y$,
\item or $x,y\in A\setminus\{a\}$ and $x\le_A y$,
\item or $x\in A\setminus\{a\}$, $y\in B$ and $x\le_A a$,
\item or $x\in \mop{min}B$, $y\in A\setminus\{a\}$ and $a\le_A y$.
\end{itemize}
\begin{thm}
Both families of partial compositions $\blacktriangle_a$ and $\blacktriangledown_a$ endow the species of finite posets with a set operad structure.
\end{thm}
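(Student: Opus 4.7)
The plan is to focus on $\blacktriangle$ and deduce $\blacktriangledown$ from the order-reversing involution $\Cal A\mapsto\overline{\Cal A}$. An elementary check from the two definitions shows that $\overline{\Cal A\blacktriangle_a\Cal B}=\overline{\Cal A}\blacktriangledown_a\overline{\Cal B}$, because reversing $\le_B$ sends $\max B$ to $\min\overline{B}$ and interchanges the roles of the two cross-relation rules defining the products. Hence once $\blacktriangle$ is known to be a set operad, applying the involution to each associativity axiom yields the same for $\blacktriangledown$.

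First I would check that $\le$ on $\Cal A\blacktriangle_a\Cal B$ is a partial order. Reflexivity is immediate. For antisymmetry the only nontrivial case is $x\in A\setminus\{a\}$, $y\in B$: the conjunction $x\le y$ and $y\le x$ forces $x\le_A a$ and $a\le_A x$, giving $x=a$, contradicting $x\in A\setminus\{a\}$. For transitivity I would go through the eight position patterns of $(x,y,z)$ (each lying in $A\setminus\{a\}$ or in $B$). The pattern $B,A\setminus\{a\},B$ requires attention: it forces $a\le_A y$ and $y\le_A a$, hence $y=a$, so it cannot occur. The remaining patterns follow at once from transitivity of $\le_A$ and $\le_B$, once one observes that the pattern $A\setminus\{a\},B,B$ uses the fact that $y\in\max B$ and $y\le_B z$ force $z=y\in\max B$, while the pattern $A\setminus\{a\},A\setminus\{a\},B$ gives $z\in\max B$ directly from $y\le z$ and $x\le_A a$ by transitivity from $x\le_A y\le_A a$.

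Next I would verify the two associativity axioms by direct case inspection; the underlying sets of the iterated compositions are $(A\setminus\{a,a'\})\sqcup B\sqcup C$ in the parallel case and $(A\setminus\{a\})\sqcup(B\setminus\{b\})\sqcup C$ in the nested case. For parallel associativity, every cross relation in either bracketing unfolds to an explicit condition in $\le_A,\le_B,\le_C$ and in the sets $\max B,\max C$ which is manifestly the same on both sides (for example $y\in B$ and $z\in C$ with $y\le z$ becomes in both bracketings ``$z\in\max C$ and $a\le_A a'$''). For nested associativity, the crucial auxiliary formula is
\begin{equation*}
\max(\Cal B\blacktriangle_b\Cal C)=(\max B\setminus\{b\})\sqcup\bigl\{y\in\max C\,:\,b\in\max B\bigr\},
\end{equation*}
which follows because an element of $B\setminus\{b\}$ is maximal in $\Cal B\blacktriangle_b\Cal C$ precisely when it is maximal in $\Cal B$, while an element of $C$ is maximal there only if $b$ was itself maximal in $\Cal B$. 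Using this, the cross relation ``$x\in A\setminus\{a\}$, $z\in C$, $x\le z$'' unfolds on both bracketings to the same conjunction $x\le_A a$, $b\in\max B$, $z\in\max C$; the remaining five types of mixed pairs are checked the same way.

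The main obstacle is the case-by-case bookkeeping in the nested axiom: the asymmetry between the maximum restriction on one side and the absence of any restriction on the other side of the rules defining $\blacktriangle_a$ forces one to unfold each type of mixed pair separately. The point that makes everything line up is that only the top layer of $\Cal B$ ever connects upward to the rest of $\Cal A$, which keeps the formula for $\max(\Cal B\blacktriangle_b\Cal C)$ clean and prevents any unwanted maximal element from appearing when we compare the two bracketings.
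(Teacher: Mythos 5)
Your proposal is correct and takes essentially the same route as the paper: a direct case-by-case comparison of the two bracketings, with your auxiliary formula for $\max(\mathcal{B}\blacktriangle_b\mathcal{C})$ appearing implicitly (in its dual $\min$ form) in the paper's nine-condition expansion — the paper verifies $\blacktriangledown$ and declares $\blacktriangle$ entirely similar, whereas you verify $\blacktriangle$ and transfer to $\blacktriangledown$ by the order-reversing involution, a symmetry the paper itself invokes elsewhere. Your explicit check that $\mathcal{A}\blacktriangle_a\mathcal{B}$ is indeed a partial order is a point the paper leaves tacit.
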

\begin{proof}
Let us prove it for the family $\blacktriangledown_a$. The proof for $\blacktriangle_a$ is entirely similar. Let $\Cal A=(A,\le_A)$, $\Cal B=(B,\le_B)$ and $\Cal C=(C,\le_C)$ be three finite posets, with $a\in A$ and $b\in B$. The poset $(\Cal A \blacktriangledown_a \Cal B)\blacktriangledown_b\Cal C$ is the set $A\sqcup_a B\sqcup_b C$ endowed with the partial order $\le$ defined by: $x\le y$ if and only if:
\begin{itemize}
\item either $x,y\in C$ and $x\le_C y$,
\item or $x,y\in A\sqcup_a B\setminus\{b\}$ and $x\le_{\Cal A \blacktriangledown_a \Cal B}y$,
\item or $x\in A\sqcup_a B\setminus\{b\}$, $y\in C$ and $x\le_{\Cal A \blacktriangledown_a \Cal B} b$,
\item or $x\in \mop{min}\Cal C$, $y\in A\sqcup_a B\setminus\{b\}$ and $b\le_{\Cal A \blacktriangledown_a \Cal B} y$.
\end{itemize}
\vskip 2mm
This in turn expands into: $x\le y$ if and only if:
\vskip 2mm
\begin{itemize}
\item (1) either $x,y\in C$ and $x\le_C y$,
\vskip 2mm
\item (2) or $x,y\in B\setminus\{b\}$ and $x\le_B y$,
\item (3) or $x,y\in A\setminus\{a\}$ and $x\le_A y$,
\item (4) or $x\in A\setminus\{a\}$, $y\in B\setminus \{b\}$ and $x\le_A a$,
\item (5) or $x\in (\mop{min} \Cal B)\setminus\{b\}$, $y\in A\setminus\{a\}$ and $a\le_A y$,
\vskip 2mm
\item (6) or $x\in B\setminus\{b\}$, $y\in C$ and $x\le_B b$,
\item (7) or $x\in A\setminus\{a\}$, $y\in C$ and $x\le_A a$,
\vskip 2mm
\item (8) or $x\in \mop{min}\Cal C$, $y\in B\setminus\{b\}$ and $b\le_B y$,
\item (9) or $x\in \mop{min}\Cal C$, $y\in A\setminus\{a\}$, $b\in\mop{min}\Cal B$ and $a\le_A y$.
\end{itemize}
On the other hand, the poset $\Cal A \blacktriangledown_a (\Cal B\blacktriangledown_b\Cal C)$ is the set $A\sqcup_a B\sqcup_b C$ endowed with the partial order $\leqslant$ defined by: $x\leqslant y$ if and only if:
\begin{itemize}
\item either $x,y\in B\sqcup_b C$ and $x\le_{\Cal B\blacktriangledown_b\Cal C} y$,
\item or $x,y\in A\setminus\{a\}$ and $x\le_A y$,
\item or $x\in A\setminus\{a\}$, $y\in B\sqcup_b C$ and $x\le_A a$,
\item or $x\in \mop{min}\Cal B\blacktriangledown_b\Cal C $, $y\in A\setminus\{a\}$ and $a\le_A y$.
\end{itemize}
\vskip 2mm
This in turn expands into: $x\leqslant y$ if and only if:
\vskip 2mm
\begin{itemize}
\item (1) either $x,y\in C$ and $x\le_C y$,
\item (2) or $x,y\in B\setminus\{b\}$ and $x\le_B y$,
\item (6) or $x\in B\setminus\{b\}$, $y\in C$ and $x\le_B b$,
\item (8) or $x\in\mop{min}\Cal C$, $y\in B\setminus\{b\}$ and $b\le_B y$,
\vskip 2mm
\item (3) or $x,y\in A\setminus\{a\}$ and $x\le_A y$,
\vskip 2mm
\item (4) or $x\in A\setminus\{a\}$, $y\in B\setminus\{b\}$ and $x\le_A a$,
\item (7) or $x\in A\setminus\{a\}$, $y\in C$ and $x\le_A a$,
\vskip 2mm
\item (9) or $x\in\mop{min}\Cal C$, $b\in\mop{min}\Cal B$, $y\in A\setminus\{a\}$ and $a\le_a y$,
\item (5) or $x\in (\mop{min}\Cal B)\setminus\{b\}$, $y\in A\setminus\{a\}$ and $a\le_A y$.
\end{itemize}
Hence the two partial orders $\le$ and $\leqslant$ coincide of $A\sqcup_a B\sqcup_b C$, which proves the nested associativity axiom. Now let $a'\neq a$ be a second element of $A$. The poset $(\Cal A \blacktriangledown_a \Cal B)\blacktriangledown_{a'}\Cal C$ is the set $A\sqcup B\sqcup C\setminus\{a,a'\}$ endowed with the partial order $\le$ defined by: $x\le y$ if and only if:
\begin{itemize}
\item either $x,y\in C$ and $x\le_C y$,
\item or $x,y\in A\sqcup_a B\setminus\{a'\}$ and $x\le_{\Cal A \blacktriangledown_a \Cal B}y$,
\item or $x\in A\sqcup_a B\setminus\{a'\}$, $y\in C$ and $x\le_{\Cal A \blacktriangledown_a \Cal B} a'$,
\item or $x\in \mop{min}\Cal C$, $y\in A\sqcup_a B\setminus\{a'\}$ and $a'\le_{\Cal A \blacktriangledown_a \Cal B} y$.
\end{itemize}
\vskip 2mm
This in turn expands into: $x\le y$ if and only if:
\vskip 2mm
\begin{itemize}
\item either $x,y\in C$ and $x\le_C y$,
\vskip 2mm
\item or $x,y\in B$ and $x\le_B y$,
\item or $x,y\in A\setminus\{a,a'\}$ and $x\le_A y$,
\item or $x\in A\setminus\{a,a'\}$, $y\in B$ and $x\le_A a$,
\item or $x\in \mop{min}\Cal B$, $y\in A\setminus\{a,a'\}$ and $a\le_A y$,
\vskip 2mm
\item or $x\in \mop{min}\Cal B$, $y\in C$ and $a\le_A a'$,
\item or $x\in A\setminus\{a,a'\}$, $y\in C$ and $x\le_A a'$,
\vskip 2mm
\item or $x\in \mop{min}\Cal C$, $y\in B$ and $a'\le_A a$,
\item or $x\in \mop{min}\Cal C$, $y\in A\setminus\{a,a'\}$ and $a'\le_A y$.
\end{itemize}
Exchanging $(B,a)$ with $(C,a')$ leaves the nine conditions above globally unchanged, which proves the parallel associativity axiom.
\end{proof}
\begin{rmk}
{\rm The same proof can be written almost word for word for the operad $(\mathbb P,\bullet)$: it suffices to replace $\blacktriangledown_a$ and $\blacktriangledown_b$ by $\bullet_a$ and $\bullet_b$ respectively, and to suppress "min" everywhere in the proof. Hence we have two proofs for Theorem \ref{poset-operad-set}.}
\end{rmk}
We can see the difference between the three structures in this example:
\begin{align*}
\scalebox{1.2}{\tdtroisdeux{$a$}{$b$}{$c$}}\bullet_b\scalebox{1.2}{\tddeux{$1$}{$2$}}&=\ \scalebox{1.2}{\tdquatrecinq {$a$}{$1$}{$2$}{$c$}},\\
\scalebox{1.2}{\tdtroisdeux{$a$}{$b$}{$c$}}\blacktriangledown_b\scalebox{1.2}{\tddeux{$1$}{$2$}}&=\scalebox{1.2}{\tdquatrequatre {$a$}{$1$}{$2$}{$c$}},\\
\scalebox{1.2}{\tdtroisdeux{$a$}{$b$}{$c$}}\blacktriangle_b\scalebox{1.2}{\tddeux{$1$}{$2$}}&=\scalebox{1.2}{\pdquatrequatre {$a$}{$1$}{$2$}{$c$}}\ .
\end{align*}
\begin{rmk}
\rm The family of partial compositions $\blacktriangledown_a$ restricts itself to the rooted trees. The suboperad thus defined is nothing but the NAP operad \cite{L06}. The pre-Lie operad \cite{CL01} is however \textsl{not} obtained form the family of partial compositions $\circ_a$, which don't restrict themselves to the rooted trees.
\end{rmk}
\subsection{Proof of Theorem \ref{poset-operad-main}}\label{sect:proof}
Recall the species automorphism $\Phi$ of $\mathbb P$ described in the Introduction, defined by:
$$\Phi(\Cal A)=\sum_{\Cal A'\preceq\Cal A}\Cal A'.$$
Let $\Cal A=(A,\le_A)$ and $\Cal B=(B,\le_B)$ be two finite posets, and let $a\in A$. Let us introduce the two following sets: $\Omega'(\Cal A,a,\Cal B)$ is the set of all partial orders $\le$ on $A\sqcup_a B$ such that
\begin{itemize}
\item $B$ is convex in $(A\sqcup_a B,\le)$,
\item $(B,\le\restr{B})\preceq\Cal B$,
\item $(A\sqcup_a B,\le)/B\preceq \Cal A$, where the set $(A\sqcup_a B)/B=(A\setminus\{a\})\sqcup \{B\}$ is naturally identified with $A$ by sending $B$ on $a$,
\end{itemize}
and $\Omega''(\Cal A,a,\Cal B)$ is the set of all partial orders $\le$ on $A\sqcup_a B$ such that
$$(A\sqcup_aB,\le)\preceq \Cal A\bullet_a\Cal B.$$
We obviously have:
\begin{equation}
\Phi(\Cal A\bullet_a\Cal B)=\sum_{\le\in\Omega''(\Cal A,a,\Cal B)}(A\sqcup_aB,\le)
\end{equation}
and
\begin{equation}
\Phi(\Cal A)\circ_a\Phi(\Cal B)=\sum_{\le\in\Omega'(\Cal A,a,\Cal B)}(A\sqcup_aB,\le).
\end{equation}
To prove Theorem \ref{poset-operad-main}, it is then enough to prove the equality $\Omega'(\Cal A,a,\Cal B)=\Omega''(\Cal A,a,\Cal B)$. Let us first prove the inclusion $\Omega'(\Cal A,a,\Cal B)\subset\Omega''(\Cal A,a,\Cal B)$: for any partial order $\le\in \Omega'(\Cal A,a,\Cal B)$ and $x,y$ such that $x\le y$, we have four cases to look at:
\begin{itemize}
\item if $x,y\in B$ then $x\le\restr{B}\:y$, hence $x\le_B y$, hence $x\le_{\Cal A\bullet_a\Cal B}y$.
\item If $x\in A\setminus\{a\}$ and $y\in B$, then $x\le_{ A}a$, hence $x\le_{\Cal A\bullet_a\Cal B}y$.
\item If $x\in B$ and $y\in A\setminus \{a\}$, then $a\le_{A}y$, hence $x\le_{\Cal A\bullet_a\Cal B}y$.
\item If $x,y\in A\setminus \{a\}$, then $x\le_{A}y$, hence $x\le_{\Cal A\bullet_a\Cal B}y$.
\end{itemize}
To sum up, for any partial order $\le\in \Omega'(\Cal A,a,\Cal B)$ and for any $x,y\in A\sqcup_a B$, $x\le y\Rightarrow x\le_{\Cal A\bullet_a\Cal B}y$, which shows the inclusion.\\

Let us now prove the reverse inclusion $\Omega''(\Cal A,a,\Cal B)\subset\Omega'(\Cal A,a,\Cal B)$: first of all, for any partial order $\le\in \Omega''(\Cal A,a,\Cal B)$ and for any $x,z,y\in A\sqcup_a B$ with $x,y\in B$ and $x\le z\le y$, we have $x\le_{\Cal A\bullet_a\Cal B} z\le_{\Cal A\bullet_a\Cal B} y$, hence $z\in B$ by convexity of $B$ in $\Cal A\bullet_a\Cal B$. Hence $B$ is convex in $(A\sqcup_a B,\le)$. Now for any $x,y\in B$ we have $x\le y\Rightarrow x\le_{\Cal A\bullet_a\Cal B} y\Rightarrow x\le_B y$. Now let us denote by $\leqslant$ the partial order on $A\sim (A\sqcup_a B)/B$ corresponding to the poset $(A\sqcup_a B,\le)/B$. We just have to prove that for any $x,y\in A$, $x\leqslant y\Rightarrow x\le_A y$. Four cases can occur:
\begin{enumerate}
\item if $x,y\in A\setminus\{a\}$, two subcases occur:
\begin{itemize}
\item if $x\le_A y$ then $x\le_{\Cal A\bullet_a\Cal B} y$, then $x\le_A y$.
\item If there exist $b,b'\in B$ such that $x\le b$ and $b'\le y$, then $x\le_{\Cal A\bullet_a\Cal B} b$ and $b'\le_{\Cal A\bullet_a\Cal B} y$, hence $x\le_{A}a$ and $a\le_{A}y$, which yields $x\le_A y$.
\end{itemize}
\item If $x\in A\setminus\{a\}$ and $y=a$, there exists $b\in B$ such that $x\le b$, hence $x\le_{\Cal A\bullet_a\Cal B} b$, hence $x\le_A y$.
\item The case $x=a$ and $y\in A\setminus\{a\}$ is treated similarly.
\item The case $x=y=a$ is trivial.
\end{enumerate}
This proves the equality $\Omega'(\Cal A,a,\Cal B)=\Omega''(\Cal A,a,\Cal B)$, which in turn proves Theorem \ref{poset-operad-main}.
\begin{cor}
The partial compositions $\circ$ endow the species $\mathbb P$ with a linear operad structure, isomorphic to $(\mathbb P,\bullet)$.
\end{cor}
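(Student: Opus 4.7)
The plan is to deduce both assertions from the identity $\Phi(\mathcal A\bullet_a\mathcal B)=\Phi(\mathcal A)\circ_a\Phi(\mathcal B)$ obtained in Section \ref{sect:proof} via the equality $\Omega'=\Omega''$, by transporting the set-operad structure on $(\mathbb P,\bullet)$ through the species map $\Phi$.

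First I would check that $\Phi$ is an invertible species endomorphism of $\mathbb P$. For a finite set $A$, order the basis of $\mathbb P_A$ (the partial orders on $A$) by any total order refining the preorder $\preceq$. Since $\Phi_A(\mathcal A)=\sum_{\mathcal A'\preceq\mathcal A}\mathcal A'$, the matrix of $\Phi_A$ in this ordering is unitriangular and hence invertible; naturality with respect to bijections is clear because the refinement preorder is preserved by relabelling. Thus $\Phi$ is a species automorphism.

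Next I would install the operad structure on $(\mathbb P,\circ)$ by transport. The compatibility identity together with the invertibility of $\Phi$ yields $\mathcal A'\circ_a\mathcal B'=\Phi\bigl(\Phi^{-1}(\mathcal A')\bullet_a\Phi^{-1}(\mathcal B')\bigr)$ for all $\mathcal A',\mathcal B'\in\mathbb P$ and every admissible $a$. Both the parallel and the nested associativity axioms for $\circ$ then follow directly from the corresponding axioms for $\bullet$ (Theorem \ref{poset-operad-set}): write each input as $\Phi$ applied to some poset, move $\Phi$ through the partial compositions using the intertwining identity, apply associativity of $\bullet$, and then pull $\Phi$ back out. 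Functoriality of $\circ$ with respect to bijections is immediate from the label-invariant definition \eqref{poset-partial-composition} of $\Omega(\mathcal A,a,\mathcal B)$. Finally, the intertwining identity itself is precisely the statement that $\Phi$ is a morphism of operads, and its invertibility promotes it to the desired isomorphism $(\mathbb P,\bullet)\longrightarrow(\mathbb P,\circ)$.

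The only substantial difficulty a direct route would face is the verification of associativity for $\circ$, which would require enumerating, for each triple of posets, all pairs of nested (or parallel) convex quotients in a double sum and matching them up bijectively; going through $\Phi$ bypasses this combinatorial bookkeeping entirely and reduces everything to the already-handled set-theoretic case.
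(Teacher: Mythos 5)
Your proposal is correct and follows essentially the same route as the paper: the equality $\Omega'(\Cal A,a,\Cal B)=\Omega''(\Cal A,a,\Cal B)$ gives the intertwining identity $\Phi(\Cal A\bullet_a\Cal B)=\Phi(\Cal A)\circ_a\Phi(\Cal B)$, and the operad axioms for $\circ$ together with the isomorphism are then obtained by transporting the structure of $(\mathbb P,\bullet)$ (Theorem \ref{poset-operad-set}) through the unitriangular, hence invertible, species automorphism $\Phi$. Your explicit verification of the invertibility of $\Phi$ and of the transport of the associativity axioms just spells out what the paper leaves implicit.
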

\subsection{The triple suboperad generated by the connected poset with two elements}
It is possible to generate all connected posets up to four vertices with the poset $\scalebox{1.3}{\tdeux}$ and the three families of partial compositions $\bullet$, $\blacktriangledown$ and $\blacktriangle$. Indeed,
\begin{align*}
\scalebox{1.2}{\tdtroisdeux{$a$}{$b$}{$c$}}&=\scalebox{1.2}{\tddeux{$a$}{$1$}}\bullet_1\scalebox{1.2}{\tddeux{$b$}{$c$}},\hskip 12mm
\scalebox{1.2}{\tdtroisun{$a$}{$c$}{$b$}}=\scalebox{1.2}{\tddeux{$1$}{$b$}}\blacktriangledown_1\scalebox{1.2}{\tddeux{$a$}{$c$}},\hskip 12mm
\scalebox{1.2}{\pdtroisun{$a$}{$b$}{$c$}}=\scalebox{1.2}{\tddeux{$b$}{$1$}}\blacktriangle_1\scalebox{1.2}{\tddeux{$c$}{$a$}},\\
\scalebox{1.2}{\tdquatrecinq{$a$}{$b$}{$c$}{$d$}}&=\scalebox{1.2}{\tdtroisdeux{$1$}{$c$}{$d$}}\bullet_1\scalebox{1.2}{\tddeux{$a$}{$b$}},\hskip 12mm
\scalebox{1.2}{\tdquatrequatre{$a$}{$b$}{$d$}{$c$}}=\scalebox{1.2}{\tdtroisdeux{$a$}{$1$}{$c$}}\blacktriangledown_1\scalebox{1.2}{\tddeux{$b$}{$d$}},\hskip 12mm
\scalebox{1.2}{\tdquatredeux{$a$}{$d$}{$b$}{$c$}}=\scalebox{1.2}{\tdtroisdeux{$1$}{$b$}{$c$}}\blacktriangledown_1\scalebox{1.2}{\tddeux{$a$}{$d$}},\\
\scalebox{1.2}{\tdquatreun{$a$}{$d$}{$c$}{$b$}}&=\scalebox{1.2}{\tdtroisun{$1$}{$c$}{$b$}}\blacktriangledown_1\scalebox{1.2}{\tddeux{$a$}{$d$}},\hskip 12mm 
\scalebox{1.2}{\pdquatrequatre{$a$}{$b$}{$c$}{$d$}}=\scalebox{1.2}{\tdtroisdeux{$a$}{$1$}{$d$}}\blacktriangle_1\scalebox{1.2}{\tddeux{$b$}{$c$}},\hskip 12mm \scalebox{1.2}{\pdquatredeux{$a$}{$b$}{$c$}{$d$}}=\scalebox{1.2}{\tdtroisdeux{$a$}{$b$}{$1$}}\blacktriangle_1\scalebox{1.2}{\tddeux{$c$}{$d$}},\\
\scalebox{1.2}{\pdquatreun{$a$}{$d$}{$c$}{$b$}}&=\scalebox{1.2}{\pdtroisun{$1$}{$a$}{$b$}}\blacktriangle_1\scalebox{1.2}{\tddeux{$c$}{$d$}},\hskip 12mm 
\scalebox{1.2}{\pdquatresix{$a$}{$d$}{$b$}{$c$}}=\scalebox{1.2}{\tdtroisun{$a$}{$1$}{$b$}}\blacktriangle _1\scalebox{1.2}{\tddeux{$d$}{$c$}},\hskip 12mm
\scalebox{1.2}{\pdquatresept{$a$}{$d$}{$b$}{$c$}}=\scalebox{1.2}{\tddeux{$1$}{$b$}}\blacktriangledown_1\scalebox{1.2}{\pdtroisun{$c$}{$a$}{$d$}},\\
&\\
\scalebox{1.2}{\pdquatrehuit{$a$}{$d$}{$b$}{$c$}}&=\scalebox{1.2}{\tddeux{$a$}{$1$}}\bullet_1\scalebox{1.2}{\pdtroisun{$b$}{$c$}{$d$}}.
\end{align*}
One cannot generate all finite posets with the poset $\scalebox{1.3}{\tdeux}$ and the three families of partial compositions $\bullet$, $\blacktriangledown$ and $\blacktriangle$. For example, $\psix$\hskip 4mm cannot be reached this way.

\section{Compatibilities for the operadic products}\label{sect:comp}

\begin{prop}\label{compatibilities}
Let $\mathcal{A}=(A,\leq_A)$, $\mathcal{B}=(B,\leq_B)$ and $\mathcal{C}=(C,\leq_C)$ be three finite posets, and let $a,b \in A$, distinct. Then:
\begin{align*}
(\mathcal{A}\blacktriangle_a \mathcal{B})\bullet_b \mathcal{C}&=(\mathcal{A}\bullet_b \mathcal{C})\blacktriangle_a \mathcal{B},\\
(\mathcal{A}\blacktriangledown_a \mathcal{B})\bullet_b \mathcal{C}&=(\mathcal{A}\bullet_b \mathcal{C})\blacktriangledown_a \mathcal{B},\\
(\mathcal{A}\blacktriangledown_a \mathcal{B})\blacktriangle_b \mathcal{C}&=(\mathcal{A}\blacktriangle_b \mathcal{C})\blacktriangledown_a \mathcal{B}.
\end{align*}\end{prop}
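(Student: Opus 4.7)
The plan is to mimic exactly the proof of the parallel associativity axiom for $\blacktriangledown$ given in the previous section. For each of the three identities, both sides are partial orders on the same underlying set $E := A\sqcup B\sqcup C\setminus\{a,b\}$, obtained by composing an ``outer'' operation with an ``inner'' one, and I would unfold the outer operation into its four defining cases, then substitute the definition of the inner operation in each case, to obtain a single explicit list of atomic conditions describing when $x\le y$ in $E$. Matching the two lists term by term then yields the identity.

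Let me sketch this for the first identity $(\mathcal{A}\blacktriangle_a\mathcal{B})\bullet_b\mathcal{C}=(\mathcal{A}\bullet_b\mathcal{C})\blacktriangle_a\mathcal{B}$. Since $a\neq b$ both lie in $A$, after full expansion each side decomposes into nine atomic clauses, indexed by which of the three posets contain $x$ and $y$. For example, the left-hand side produces the clause ``$x\in B$, $y\in A\setminus\{a,b\}$ and $a\le_A y$'' from the $\bullet_b$-case $x,y\in(A\sqcup_a B)\setminus\{b\}$ with $x\le_{\mathcal{A}\blacktriangle_a\mathcal{B}} y$; the right-hand side produces the same clause from the $\blacktriangle_a$-case $x\in B$, $y\in(A\sqcup_b C)\setminus\{a\}$ with $a\le_{\mathcal{A}\bullet_b\mathcal{C}} y$. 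The mixed $B$-$C$ clause ``$x\in B$, $y\in C$ and $a\le_A b$'' arises on the left from case $(iv)$ of $\bullet_b$ combined with $b\le_{\mathcal{A}\blacktriangle_a\mathcal{B}} y$ under $y\in\max B$ being replaced by the more permissive ``$y\in B$'' of $\bullet_b$, and on the right from case $(IV)$ of $\blacktriangle_a$ combined with $a\le_{\mathcal{A}\bullet_b\mathcal{C}} y$. All other clauses match in the same bijective fashion, and the identity follows.

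The second identity is entirely analogous, replacing ``$y\in\max B$'' by ``$x\in\min B$'' in the appropriate clause; the argument goes through without any change of structure because the selection of $\max B$ (in $\blacktriangle_a$) or $\min B$ (in $\blacktriangledown_a$) occurs in a clause that involves neither $b$ nor $C$, and is therefore untouched by the $\bullet_b$-composition with $\mathcal{C}$. For the third identity $(\mathcal{A}\blacktriangledown_a \mathcal{B})\blacktriangle_b \mathcal{C}=(\mathcal{A}\blacktriangle_b \mathcal{C})\blacktriangledown_a \mathcal{B}$ the crucial point is that the minimum condition arising from $\blacktriangledown_a$ applies only to elements of $\mathcal{B}$, while the maximum condition arising from $\blacktriangle_b$ applies only to elements of $\mathcal{C}$. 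Since $\mathcal{B}$ and $\mathcal{C}$ are disjoint, these two extremum conditions appear in disjoint atomic clauses and therefore cannot interfere with each other when the outer and inner operations are swapped.

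The main obstacle is purely bookkeeping: each side expands into roughly ten atomic clauses, and the verification consists in checking that the resulting two lists coincide. No delicate argument beyond the observation of the preceding paragraph is needed. In practice I would write out a single table indexed by the type of the pair $(x,y)\in E\times E$ (nine combinations: $A\setminus\{a,b\}$, $B$ or $C$ for each of $x$ and $y$) and, for each entry, compare the defining conditions produced by the two sides; the table is completely symmetric and the three identities follow simultaneously from the same combinatorial observation.
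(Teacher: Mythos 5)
Your nine-case table verification is essentially the paper's own proof, which writes out exactly these $3\times 3$ tables for the first and third identities and matches them entry by entry (for the second identity the paper invokes the order-reversing involution on posets rather than redoing the table, but your direct check works just as well). Two incidental remarks in your sketch are slightly off---on the left of the first identity the clause ``$x\in B$, $y\in C$, $a\le_A b$'' comes from the case $x\in(A\sqcup_a B)\setminus\{b\}$, $y\in C$ of $\bullet_b$ together with $x\le_{\mathcal{A}\blacktriangle_a\mathcal{B}}b$ (not from the case involving $b\le_{\mathcal{A}\blacktriangle_a\mathcal{B}}y$), and in the third identity the conditions $x\in\min(\mathcal{B})$ and $y\in\max(\mathcal{C})$ actually occur together in the $B$--$C$ clause rather than in disjoint clauses---but neither slip affects the argument, since the actual proof is the completed table, whose entries do coincide on both sides.
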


\begin{proof} 1. We put $(\mathcal{A}\blacktriangle_a \mathcal{B})\bullet_b \mathcal{C}=(A\sqcup_a B\sqcup_b C,\leq_S)$
 and $(\mathcal{A}\bullet_b \mathcal{C})\blacktriangle_a \mathcal{B}=(A\sqcup_b C\sqcup_a B,\leq_T) $.
Let $x\in A\sqcup_a B \sqcup_b C$. Then $x\leq_S y$ if:
$$\begin{array}{c|c|c|c}
&y\in A\setminus\{a,b\}&y\in B&y\in C\\
\hline x\in A\setminus\{a,b\}&x\leq_A y&y\in \max(\mathcal{B})\mbox{ and } x\leq_A a&x\leq_A b\\
\hline x\in B&a\leq_A y&x\leq_B y&x\leq_{\mathcal{A}\blacktriangle_a \mathcal{B}}b\Longleftrightarrow a\leq_A b\\
\hline x\in C&b\leq_A y&b\leq_{\mathcal{A}\blacktriangle_a \mathcal{B}}y \Longleftrightarrow&x\leq_C y\\
&&y\in \max(\mathcal{B})\mbox{ and }b\leq_A a&
\end{array}$$
Moreover, $x\leq_T y$ if:
$$\begin{array}{c|c|c|c}
&y\in A\setminus\{a,b\}&y\in B&y\in C\\
\hline x\in A\setminus\{a,b\}&x\leq_A y&y\in \max(\mathcal{B})\mbox{ and } x\leq_A a&x\leq_A b\\
\hline x\in B&a\leq_A y&x\leq_B y&x\leq_{\mathcal{A}\bullet_b \mathcal{C}}b\Longleftrightarrow a\leq_A b\\
\hline x\in C&b\leq_A y&y\in \max(\mathcal{B}) \mbox{ and } x\leq_{\mathcal{A}\bullet_b \mathcal{C}}a\Longleftrightarrow&x\leq_C y\\
&&y\in \max(\mathcal{B})\mbox{ and }b\leq_A a&
\end{array}$$
So $\le_S=\le_T$.\\

2. Can be deduced from the first point, with the help of the involution on posets.\\

3. We put $(\mathcal{A}\blacktriangledown_a \mathcal{B})\blacktriangle_b \mathcal{C}=(A\sqcup_a B\sqcup C,\leq_S)$ and
$(\mathcal{A}\blacktriangle_b \mathcal{C})\blacktriangledown_a \mathcal{B}=(A\sqcup_b C\sqcup_a B,\leq_T)$.
Let $x,y\in A\sqcup_a B \sqcup_b C$. Then $x\leq_S y$ if:
$$\begin{array}{c|c|c|c}
&y\in A\setminus\{a,b\}&y\in B&y\in C\\
\hline x\in A\setminus\{a,b\}&x\leq_A y&x\leq_A a&y\in \max(\mathcal{C}) \mbox{ and }x\leq_A a\\
\hline x\in B&x\in \min(\mathcal{B})&x\leq_B y&y\in \max(\mathcal{C}),\: x\in \min(\mathcal{B})\mbox{ and }a\leq_A b\\
&\mbox{ and }a\leq_A y&&\\
\hline x\in C&b\leq_A y&b\leq_{\mathcal{A}\blacktriangledown a \mathcal{B}} y\Longleftrightarrow&x\leq_Cy\\
&&b\leq_A a&
\end{array}$$
Moreover, $x\leq_T y$ if:
$$\begin{array}{c|c|c|c}
&y\in A\setminus\{a,b\}&y\in B&y\in C\\
\hline x\in A\setminus\{a,b\}&x\leq_A y&x\leq_A a&y\in \max(\mathcal{C}) \mbox{ and }x\leq_A a\\
\hline x\in B&x\in \min(\mathcal{B})&x\leq_B y&y\in \max(\mathcal{C}),\: x\in \min(\mathcal{B})\mbox{ and }
a\leq_{\mathcal{A}\blacktriangle_b \mathcal{C}} y\Longleftrightarrow\\
&\mbox{ and }a\leq_A y&&y\in \max(\mathcal{C}),\: x\in \min(\mathcal{B})\mbox{ and }a\leq_A b\\
\hline x\in C&b\leq_A y&x\leq_{\mathcal{A}\blacktriangle_b \mathcal{C}} a\Longleftrightarrow&x\leq_Cy\\
&&b\leq_A a&
\end{array}$$
Hence $\le_S=\le_T$.
\end{proof}

\begin{rmk} Let $a\in A$ and $b\in B$. In general:
\begin{align*}
(\mathcal{A}\blacktriangle_a \mathcal{B})\bullet_b \mathcal{C}&\neq \mathcal{A}\blacktriangle_a (\mathcal{B}\bullet_b \mathcal{C}),&
(\mathcal{A}\blacktriangledown_a \mathcal{B})\bullet_b \mathcal{C}&\neq \mathcal{A}\blacktriangledown_a (\mathcal{B}\bullet_b \mathcal{C}),\\
(\mathcal{A}\bullet_a \mathcal{B})\blacktriangle_b \mathcal{C}&\neq \mathcal{A}\bullet_a (\mathcal{B}\blacktriangle_b \mathcal{C}),&
(\mathcal{A}\blacktriangledown_a \mathcal{B})\blacktriangle_b \mathcal{C}&\neq \mathcal{A}\blacktriangledown_a (\mathcal{B}\blacktriangle_b \mathcal{C}),\\
(\mathcal{A}\bullet_a \mathcal{B})\blacktriangledown_b \mathcal{C}&\neq \mathcal{A}\bullet_a (\mathcal{B}\blacktriangledown_b \mathcal{C}),&
(\mathcal{A}\blacktriangle_a \mathcal{B})\blacktriangledown_b \mathcal{C}&\neq \mathcal{A}\blacktriangle_a (\mathcal{B}\blacktriangledown_b \mathcal{C}).
\end{align*}
For example:
\begin{align*}
(\tddeux{}{$a$}\blacktriangle_a \tddeux{}{$b$})\bullet_b \tdeux&=\pdtroisun{$b$}{}{}\bullet_b \tdeux=\pquatrequatre,&
\tddeux{}{$a$}\blacktriangle_a (\tddeux{}{$b$}\bullet_b \tdeux)&=\tddeux{}{$a$}\blacktriangle_a\ttroisdeux=\pquatredeux.\\
(\tddeux{$a$}{}\blacktriangledown_a \tddeux{$b$}{})\bullet_b \tdeux&=\tdtroisun{$b$}{}{}\bullet_b \tdeux=\tquatrequatre,&
\tddeux{$a$}{}\blacktriangledown_a (\tddeux{$b$}{}\bullet_b \tdeux)&=\tddeux{$a$}{}\blacktriangledown_a\ttroisdeux=\tquatredeux.\\
(\tddeux{}{$a$}\bullet_a \tddeux{}{$b$})\blacktriangle_b \tdeux&=\tdtroisdeux{}{}{$b$}\bullet_b \tdeux=\pquatredeux,&
\tddeux{}{$a$}\bullet_a (\tddeux{}{$b$}\blacktriangle_b \tdeux)&=\tddeux{}{$a$}\blacktriangle_a\ptroisun=\pquatrehuit.\\
(\tddeux{$a$}{}\blacktriangledown_a \tddeux{}{$b$})\blacktriangle_b \tdeux&=\tdtroisun{}{}{$b$}\blacktriangle_b \tdeux=\pquatresix,&
\tddeux{$a$}{}\blacktriangledown_a (\tddeux{}{$b$}\blacktriangle_b \tdeux)&=\tddeux{$a$}{}\blacktriangle_a \ttroisun=\pquatresept.\\
(\tddeux{$a$}{}\bullet_a \tddeux{$b$}{})\blacktriangledown_b \tdeux&=\tdtroisdeux{$b$}{}{}\bullet_b \tdeux=\tquatredeux,&
\tddeux{$a$}{}\bullet_a (\tddeux{$b$}{}\blacktriangledown_b \tdeux)&=\tddeux{$a$}{}\blacktriangle_a\ttroisun=\pquatrehuit.\\
(\tddeux{}{$a$}\blacktriangle_a \tddeux{$b$}{})\blacktriangledown_b \tdeux&=\pdtroisun{}{$b$}{}\blacktriangledown_b \tdeux=\pquatrecinq,&
\tddeux{}{$a$}\blacktriangle_a (\tddeux{$b$}{}\blacktriangledown_b \tdeux)&=\tddeux{}{$a$}\blacktriangle_a \ttroisun=\pquatresept.
\end{align*}\end{rmk}

\section{Algebraic structures associated to these operads}\label{sect:alg}

\subsection{Products}

Let us introduce some notations: let $\mathcal{A}\in \mathbb{P}_{\{1,2\}}$ and $\mathcal{B}$, $\mathcal{C}$ be two finite posets. We put:
\begin{align*}
\mathcal{A}\circ(\mathcal{B},\mathcal{C})&=(\mathcal{A}\circ_1 \mathcal{B})\circ_2 \mathcal{C}
=(\mathcal{A}\circ_2 \mathcal{C}) \circ_1 \mathcal{B},\\
\mathcal{A}\bullet(\mathcal{B},\mathcal{C})&=(\mathcal{A}\bullet_1 \mathcal{B})\bullet_2 \mathcal{C}
=(\mathcal{A}\bullet_2 \mathcal{C}) \bullet_1 \mathcal{B},\\
\mathcal{A}\blacktriangle(\mathcal{B},\mathcal{C})&=(\mathcal{A}\blacktriangle_1 \mathcal{B})\blacktriangle_2 \mathcal{C}
=(\mathcal{A}\blacktriangle_2 \mathcal{C}) \blacktriangle_1 \mathcal{B},\\
\mathcal{A}\blacktriangledown(\mathcal{B},\mathcal{C})&=(\mathcal{A}\blacktriangledown_1 \mathcal{B})\blacktriangledown_2 \mathcal{C}
=(\mathcal{A}\blacktriangledown_2 \mathcal{C}) \blacktriangledown_1 \mathcal{B}.
\end{align*}

The free algebra on one generator over the different operadic structures on $\mathbb{P}$ is the vector space:
$$F_{\mathbb{P}}(1)=\bigoplus_{n=1}^\infty \mathbb{P}(\{1,\ldots,n\})/\mathfrak{S}_n,$$
so it can be identified with the vector space generated by the isomorphism classes (shortly, isoclasses) of finite posets.
This space inherits several bilinear products from the operad structures on posets, described below.\\

{\bf Notations.} If $\mathcal{A}$ is a finite poset, we denote by $\isoclasse{\mathcal{A}}$ its isomorphism class.

\begin{thm} \label{theoproduits}
Let $\mathcal{A}=(A,\leq_A)$ and $\mathcal{B}=(B,\leq_B)$ be two finite posets. Then we have in $F_{\mathbb{P}}(1)$:
\begin{enumerate}
\item $\tdun{$1$}\tdun{$2$}\circ(\isoclasse{\mathcal{A}},\isoclasse{\mathcal{B}})=
\tdun{$1$}\tdun{$2$}\bullet(\isoclasse{\mathcal{A}},\isoclasse{\mathcal{B}})=
\tdun{$1$}\tdun{$2$}\blacktriangle(\isoclasse{\mathcal{A}},\isoclasse{\mathcal{B}})=
\tdun{$1$}\tdun{$2$}\blacktriangledown(\isoclasse{\mathcal{A}},\isoclasse{\mathcal{B}})=\isoclasse{\mathcal{A}\mathcal{B}}$,
where $\mathcal{A}\mathcal{B}=(A\sqcup B,\leq)$, with, for all $x,y \in A\sqcup B$; $x\leq y$ if and only if:
\begin{itemize}
\item $x,y\in A$ and $x\leq_A y$,
\item or $x,y \in B$ and $x\leq_B y$.
\end{itemize} 
This product, defined on isoclasses of posets and also denoted by $m$, is sometimes called the disjoint union \cite{Stanley}.
It is associative and commutative.
\item $\tddeux{$1$}{$2$}\bullet(\isoclasse{\mathcal{A}},\isoclasse{\mathcal{B}})=\isoclasse{\mathcal{A}\downarrow \mathcal{B}}$, 
where $\mathcal{A}\downarrow \mathcal{B}=(A\sqcup B,\leq)$, with, for all $x,y \in A\sqcup B$; $x\leq y$ if and only if:
\begin{itemize}
\item $x,y\in A$ and $x\leq_A y$,
\item or $x,y \in B$ and $x\leq_B y$,
\item or $x\in A$ and $y\in B$.
\end{itemize} 
This product,  defined on isoclasses of posets and also denoted by $\downarrow$, is sometimes called the direct sum \cite{Stanley}. It is associative.
\item $(\tdun{$1$}\tdun{$2$}+\tddeux{$1$}{$2$})\circ(\isoclasse{\mathcal{A}},\isoclasse{\mathcal{B}})$  is the sum of isoclasses of posets $\mathcal{C}$
on $A\sqcup B$ such that:
\begin{itemize}
\item If $x,y\in A$, $x\leq_{\Cal C} y$ if, and only if, $x\leq_A y$.
\item If $x,y\in B$, $x\leq_{\Cal C} y$ if, and only if, $x\leq_B y$.
\item $B < A$ in the sense of the Introduction, that is to say for all $x\in B$, $y\in A$, $x\not\geq_{\Cal C} y$.
\ignore{\item $\Cal C\neq\Cal A\Cal B$ \textcolor{red}{A retirer, ceci est pour $\tddeux{$1$}{$2$}\circ(\isoclasse{\mathcal{A}},\isoclasse{\mathcal{B}})$.
Ajouter $\tdun{$1$}\tdun{$2$}\circ(\isoclasse{\mathcal{A}},\isoclasse{\mathcal{B}})$ donne le terme $\Cal A\Cal B$}
}
\end{itemize}
This product, denoted by $*$,  is associative.
\item  $\tddeux{$1$}{$2$}\blacktriangle(\isoclasse{\mathcal{A}},\isoclasse{\mathcal{B}})=\isoclasse{\mathcal{A}\triangle \mathcal{B}}$, 
where $\mathcal{A}\blacktriangle \mathcal{B}=(A\sqcup B,\leq)$, with, for all $x,y \in A\sqcup B$; $x\leq y$ if and only if:
\begin{itemize}
\item $x,y\in A$ and $x\leq_A y$,
\item or $x,y \in B$ and $x\leq_B y$,
\item or $x\in A$ and $y\in \max(\mathcal{B})$.
\end{itemize} 
This product, defined on isoclasses of posets and also denoted by $\triangle$, is non associative permutative: for all $x,y,z\in F_\mathbb{P}(1)$,
$$x\triangle(y\triangle z)=(xy)\triangle z=y\triangle (x\triangle z).$$
\item  $\tddeux{$1$}{$2$}\blacktriangledown(\isoclasse{\mathcal{A}},\isoclasse{\mathcal{B}})=\isoclasse{\mathcal{A}\triangledown \mathcal{B}}$,
where $\mathcal{A}\blacktriangledown\mathcal{B}=(A\sqcup B,\leq)$, with, for all $x,y \in A\sqcup B$; $x\leq y$ if:
\begin{itemize}
\item $x,y\in A$ and $x\leq_A y$,
\item or $x,y \in B$ and $x\leq_B y$,
\item or $x \in \min(\mathcal{B})$ and $y\in A$.
\end{itemize} 
This product, defined on isoclasses of posets and also denoted by $\triangledown$, is  non associative permutative: 
for all $x,y,z\in F_\mathbb{P}(1)$,
$$x\triangledown(y\triangledown z)=(xy)\triangledown z=y\triangledown (x\triangledown z).$$
\end{enumerate}\end{thm}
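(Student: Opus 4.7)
The proof handles the five products one by one. For each, the strategy is to first unpack the operadic composition to compute the product explicitly on two arbitrary posets, and then verify the claimed algebraic property by translating it into an equality of operadic expressions.

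Part (1) is the simplest: since $\tun\tun$ carries no internal relations, the clauses defining $\circ_a$, $\bullet_a$, $\blacktriangle_a$ and $\blacktriangledown_a$ all produce no cross-relations between $A$ and $B$ when we insert $\mathcal{A}$ and then $\mathcal{B}$, so all four partial compositions collapse to the disjoint union $\mathcal{A}\mathcal{B}$. Associativity of the disjoint union is obvious, and commutativity follows from the $\mathfrak{S}_2$-invariance of $\tun\tun$ together with the quotient by $\mathfrak{S}_n$ implicit in $F_\mathbb{P}(1)$.

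Part (2) is proved by directly applying the four clauses of $\bullet_a$ from Section 3.2 to the two-step composition $(\tdeux\bullet_1\mathcal{A})\bullet_2\mathcal{B}$: the saturation rule places every element of $A$ below every element of $B$, which is exactly $\mathcal{A}\downarrow\mathcal{B}$. Associativity of $\downarrow$ is then a transcription of the nested operad axiom for $(\mathbb{P},\bullet)$: both $(\tdeux\bullet_1\tdeux)\bullet_2\tdeux$ and $(\tdeux\bullet_2\tdeux)\bullet_1\tdeux$ saturate to the same $3$-chain.

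Part (3) requires expanding $(\tun\tun+\tdeux)\circ(\isoclasse{\mathcal{A}},\isoclasse{\mathcal{B}})$ via the formula of Section 2.3: the $\tun\tun$ summand contributes $\mathcal{A}\mathcal{B}$ by part~(1), while the $\tdeux$ summand, further composed at vertex $2$ with $\mathcal{B}$, unfolds via $\Omega(\tdeux,1,\mathcal{A})$ to the partial orders on $A\sqcup B$ extending $\le_A$ and $\le_B$ with $A$ and $B$ convex, $B\not\ge A$, and at least one cross-relation. Together, the two summands give precisely the set of posets described in the statement. Associativity of $*$ is verified by expanding both $(\mathcal{A}*\mathcal{B})*\mathcal{C}$ and $\mathcal{A}*(\mathcal{B}*\mathcal{C})$ as sums over posets on $A\sqcup B\sqcup C$ extending the three given orders and satisfying simultaneously $B\not\ge A$, $C\not\ge A$ and $C\not\ge B$; these three conditions are invariant under the order of iteration, so the sums coincide.

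Parts (4) and (5) proceed by the same recipe. The products $\triangle$ and $\triangledown$ result from the two-step $\blacktriangle$- or $\blacktriangledown$-composition; a direct check of the four clauses of $\blacktriangle_a$, respectively $\blacktriangledown_a$, identifies the cross-relations as those issuing from the maximal, respectively minimal, elements of the relevant factor. For the non-associative permutative identities, I will verify term by term that $x\triangledown(y\triangledown z)$, $y\triangledown(x\triangledown z)$ and $(xy)\triangledown z$ produce the same poset on $A\sqcup B\sqcup C$ by re-applying the four-case analysis of $\blacktriangledown_a$ to each iterated composition, and analogously for $\triangle$. The main obstacle is the bookkeeping in the iterated $\blacktriangledown_a$ (resp.\ $\blacktriangle_a$): one must track how the set of minimal (resp.\ maximal) elements behaves under the first composition in order to read off the cross-relations produced by the second. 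This is cleanest to organize via the tabular comparison of cases already employed in the proof of Proposition~\ref{compatibilities}.
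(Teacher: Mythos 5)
Your plan follows the same skeleton as the paper's proof---unpack the two-step partial compositions to identify each of the five products explicitly, then check the algebraic identities---and most of it is sound. The main methodological difference is where the identities are checked: the paper verifies them once at the operadic level (the seven-term computation showing $(\tdun{$1$}\tdun{$2$}+\tddeux{$1$}{$2$})\circ_1(\tdun{$1$}\tdun{$2$}+\tddeux{$1$}{$2$})=(\tdun{$1$}\tdun{$2$}+\tddeux{$1$}{$2$})\circ_2(\tdun{$1$}\tdun{$2$}+\tddeux{$1$}{$2$})$ for $*$, and $\tddeux{$1$}{$2$}\blacktriangle_2\tddeux{$1$}{$2$}=\tddeux{$1$}{$2$}\blacktriangle_1\tdun{$1$}\tdun{$2$}$ for the permutativity of $\triangle$, part (5) being then obtained from part (4) by the involution), whereas you verify them directly on posets on $A\sqcup B\sqcup C$. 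That is heavier but viable, and you correctly isolate the key bookkeeping facts $\min(\mathcal{A}\triangledown\mathcal{B})=\min(\mathcal{B})$ and $\max(\mathcal{A}\triangle\mathcal{B})=\max(\mathcal{B})$.

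There is, however, one genuine flaw, in your treatment of the associativity of $\downarrow$ in part (2). Associativity of the product induced by an arity-two element is not ``a transcription of the nested operad axiom'': the operad axioms hold for every element of every operad, in particular for $\blacktriangle$ and $\blacktriangledown$, whose induced products $\triangle$ and $\triangledown$ are precisely not associative. What encodes associativity of $\downarrow$ is the equality of the two arity-three elements $\tddeux{$1$}{$2$}\bullet_1\tddeux{$1$}{$2$}$ and $\tddeux{$1$}{$2$}\bullet_2\tddeux{$1$}{$2$}$ (both equal to the chain $\tdtroisdeux{$1$}{$2$}{$3$}$), a property specific to this element of $(\mathbb{P},\bullet)$, and this is exactly what the paper checks. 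The two expressions you display, $(\tddeux{$1$}{$2$}\bullet_1\tddeux{$1$}{$2$})\bullet_2\tddeux{$1$}{$2$}$ and $(\tddeux{$1$}{$2$}\bullet_2\tddeux{$1$}{$2$})\bullet_1\tddeux{$1$}{$2$}$, are four-element posets whose equality is an instance of the operad axioms for any arity-two element, so it carries no information about $\downarrow$; in particular they cannot ``saturate to the same $3$-chain''. The fix is immediate, but as written this step proves the wrong statement. Two smaller points on part (3): the conditions obtained from $\Omega$ are that the restrictions to $A$ and to $B$ are exactly $\leq_A$ and $\leq_B$ (not merely extensions; convexity of $A$ and $B$ is then automatic and need not be listed), and be careful with the direction of the cross-relations: the computation, as in the paper's own proof, yields the posets in which no element of $B$ lies below an element of $A$, i.e. $A<B$, consistently with $\mathcal{A}\downarrow\mathcal{B}$ appearing among the terms---the direction printed in the statement of the theorem does not match the paper's proof, so your ``$B\not\ge A$'' needs to be stated unambiguously.
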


\begin{proof} 1. We prove it for $\circ$; the proof is similar in the other cases. First:
\begin{align*}
\tdun{$1$}\tdun{$2$}^{(12)}&=\tdun{$1$}\tdun{$2$};\\
\tdun{$1$}\tdun{$2$}\circ_1 \tdun{$1$}\tdun{$2$}&=\tdun{$1$}\tdun{$2$}\circ_2 \tdun{$1$}\tdun{$2$}=
\tdun{$1$}\tdun{$2$}\tdun{$3$},
\end{align*}
so the product induced on $F_\mathbb{P}(1)$ by $\tdun{$1$}\tdun{$2$}$ is associative and commutative. Moreover:
$$\tdun{$1$}\tdun{$2$}\circ (\isoclasse{\mathcal{A}},\isoclasse{\mathcal{B}})=\isoclasse{(\tdun{$1$}\tdun{$2$}\circ_1 \mathcal{A})\circ_2 \mathcal{B}}.$$
By definition of $\circ$, $(\tdun{$1$}\tdun{$2$}\circ_1 \mathcal{A})=\mathcal{A}\tdun{$2$}$, 
and  $\mathcal{A}\tdun{$2$}\circ_2 \mathcal{B}=\mathcal{A}\mathcal{B}$.\\

2. First:
$$\tddeux{$1$}{$2$}\bullet_1 \tddeux{$1$}{$2$}=\tddeux{$1$}{$2$}\bullet_2 \tddeux{$1$}{$2$}=\tdtroisdeux{$1$}{$2$}{$3$},$$
so the product $\downarrow$ is associative. Moreover:
$$\tddeux{$1$}{$2$}\bullet (\isoclasse{\mathcal{A}},\isoclasse{\mathcal{B}})=\isoclasse{(\tddeux{$1$}{$2$}\bullet_1\mathcal{A})\bullet_2 \mathcal{B}}.$$
The underlying set of the poset $\mathcal{C}=\tddeux{$1$}{$2$}\bullet_1\mathcal{A}$ is $A\sqcup\{2\}$, and:
$$\{(x,y)\in (A\sqcup\{2\})^2\mid x\leq_C y\}=\{(x,y)\in A\mid x\leq_A y\}\sqcup ((A\sqcup\{2\})\times \{2\}).$$
Hence, the underlying set of the poset $S=(\tddeux{$1$}{$2$}\bullet_1\mathcal{A})\bullet_2 \mathcal{B}=$ is $A\sqcup B$, and:
$$\{(x,y)\in (A\sqcup B)^2\mid x\leq_S y\}=\{(x,y)\in A\mid x\leq_A y\}\sqcup \{(x,y)\in B\mid x\leq_B y\}
\sqcup (A\times B).$$

3. First:
\begin{align*}
(\tdun{$1$}\tdun{$2$}+\tddeux{$1$}{$2$})\circ_1 (\tdun{$1$}\tdun{$2$}+\tddeux{$1$}{$2$})
&=\tdun{$1$}\tdun{$2$}\tdun{$3$}+\tdun{$1$}\tddeux{$2$}{$3$}+\tdun{$2$}\tddeux{$1$}{$3$}+\tdun{$3$}\tddeux{$1$}{$2$}
+\tdtroisun{$1$}{$3$}{$2$}+\pdtroisun{$3$}{$1$}{$2$}+\tdtroisdeux{$1$}{$2$}{$3$}\\
&=(\tdun{$1$}\tdun{$2$}+\tddeux{$1$}{$2$})\circ_2 (\tdun{$1$}\tdun{$2$}+\tddeux{$1$}{$2$}),
\end{align*}
so $*$ is associative. Let us compute $\tddeux{$1$}{$2$}\circ(\mathcal{A},\mathcal{B})=(\tddeux{$1$}{$2$}\circ_1 \mathcal{A})\circ_2 \mathcal{B}$. 
By definition, $\tddeux{$1$}{$2$}\circ_1 \mathcal{A}$ is the sum of the posets $\mathcal{C}$ on $A\sqcup \{2\}$ such that:
\begin{itemize}
\item For all $x,y\in A$, $x \leq_\mathcal{C} y$ if, and only if, $x\leq_A y$.
\item For all $y \in A$, we do not have $2\leq_\mathcal{C} y$.
\item There exists $x\in A$, $x\leq_\mathcal{C} 2$.
\end{itemize}
Hence, $\tddeux{$1$}{$2$}\circ(\mathcal{A},\mathcal{B})$ is the sum of all the posets $\mathcal{S}$ on $A\sqcup B$ such that:
\begin{itemize}
\item For all $x,y\in A$, $x \leq_\mathcal{S} y$ if, and only if, $x\leq_A y$.
\item For all $x,y\in B$, $x \leq_\mathcal{S} y$ if, and only if, $x\leq_B y$.
\item For all $x\in B$, $y\in A$, we do not have $x\leq_\mathcal{S} y$.
\item There exists $x\in A$, $y\in B$, such that $x \leq_\mathcal{S} y$.
\end{itemize}
These conditions are equivalent to:
\begin{itemize}
\item For all $x,y\in A$, $x \leq_\mathcal{S} y$ if, and only if, $x\leq_A y$.
\item For all $x,y\in B$, $x \leq_\mathcal{S} y$ if, and only if, $x\leq_B y$.
\item $A<B$.
\item $\mathcal{S} \neq \mathcal{A}\mathcal{B}$.
\end{itemize}
Finally, $\tddeux{$1$}{$2$}\circ(\isoclasse{\mathcal{A}},\isoclasse{\mathcal{B}})$ is the sum of isoclasses of posets $\Cal S$
on $A\sqcup B$ such that:
\begin{itemize}
\item If $x,y\in A$, $x\leq_\mathcal{S} y$ if, and only if, $x\leq_A y$.
\item If $x,y\in B$, $x\leq_\mathcal{S} y$ if, and only if, $x\leq_B y$.
\item $A<_{\Cal S}B$.
\item $\mathcal{S}\neq \mathcal{A}\mathcal{B}$.
\end{itemize}
The first point gives the result for $*$.\\

4. First: 
$$\tddeux{$1$}{$2$}\blacktriangle_2 \tddeux{$1$}{$2$}=\tddeux{$1$}{$2$}\blacktriangle_1 \tdun{$1$}\tdun{$2$}
=\pdtroisun{$3$}{$1$}{$2$}.$$
So for all $x,y,z\in F_\mathbb{P}(1)$, $x\triangle(y\triangle z)=(xy)\triangle z$. The commutativity of the product $m$ gives the permutativity of $\triangle$.
Moreover, $\isoclasse{\mathcal{A}}\triangle \isoclasse{\mathcal{B}}=\isoclasse{(\tddeux{$1$}{$2$}\blacktriangle_1 
\mathcal{A})\blacktriangle_2 \mathcal{B}}$; the underlying set of the poset
$\mathcal{C}=\tddeux{$1$}{$2$}\blacktriangle_1 \mathcal{A}$ is $A\sqcup\{2\}$, and:
$$\{(x,y)\in (A\sqcup\{2\})^2\mid x\leq_C y\}=\{(x,y)\in A^2\mid x\leq_A y\}\sqcup((A\sqcup\{2\}) \times \{2\}).$$
so the underlying set of  $S=(\tddeux{$1$}{$2$}\blacktriangle_1 \mathcal{A})\blacktriangle_2 \mathcal{B}$ is $A\sqcup B$ and:
$$\{(x,y)\in (A\sqcup B)^2\mid x\leq_S y\}=\{(x,y)\in A\mid x\leq_A y\}\sqcup \{(x,y)\in B\mid x\leq_B y\}\sqcup (A \times \max(\mathcal{B})).$$

5. Comes from the preceding point, using the involution on posets. \end{proof}

The products $m$, $\downarrow$ and $*$ are extended to $\overline{F}_\mathbb{P}(1)=\mathbb{K}\oplus F_\mathbb{P}(1)$,
by assuming that $1\in \mathbb{K}$ is the unit for all these products. We now identify $1$ with the empty poset.

\begin{rmk} 
We could also work with the free $\mathbb{P}$-algebra $F_\mathbb{P}(D)$ generated by a set $D$:
this is the vector space generated by isoclasses of posets decorated by $D$ , that is to say pairs $(\mathcal{A},d)$, 
where $\mathcal{A}=(A,\leq_A)$ is a poset and $d:A\longrightarrow D$ is a map.
\end{rmk}

\subsection{Coproducts}

We identify $\overline{F}_\mathbb{P}(1)$ and its graded dual, via the pairing defined on two isoclasses of posets $\isoclasse{\mathcal{A}}$, 
$\isoclasse{\mathcal{B}}$ by:
$$\langle \isoclasse{\mathcal{A}},\isoclasse{\mathcal{B}}\rangle=s_{\isoclasse{\mathcal{A}}} \delta_{\isoclasse{\mathcal{A}},\isoclasse{\mathcal{B}}},$$
where $s_{\isoclasse{\mathcal{A}}}$ is the number of poset automorphisms of $\mathcal{A}$.
We now define two coproducts on $F_\mathbb{P}(1)$. Let $\mathcal{A} \in \mathbb{P}_A$.
\begin{itemize}
\item  We decompose it as $\mathcal{A}=\mathcal{A}_1\ldots \mathcal{A}_k$, where $\mathcal{A}_i$ are the connected components of $\mathcal{A}$. Then:
$$\Delta(\isoclasse{\mathcal{A}})=\sum_{I\subseteq \{1,\ldots,k\}} \isoclasse{\prod_{i\in I} \mathcal{A}_i}\otimes \isoclasse{\prod_{i\notin I} \mathcal{A}_i}.$$
This coproduct is dual of the product $m$.
\item We put:
$$\Delta_*(\isoclasse{\mathcal{A}})=\sum_{I\subseteq A,\: A\setminus I < I} \isoclasse{\mathcal{A}_{\mid A\setminus I}}
\otimes \isoclasse{A_{\mid I}}.$$
This coproduct is the one of the Introduction, and it is the dual of the product $*$.
\end{itemize}

\begin{thm}\begin{enumerate}
\item $(\overline{F}_\mathbb{P}(1),*,\Delta)$ and $(\overline{F}_\mathbb{P}(1),m,\Delta_*)$ are dual  bialgebras.
\item  $(\overline{F}_\mathbb{P}(1),\downarrow,\Delta_*)$ is an infinitesimal bialgebra \cite{Loday1}.
\end{enumerate}\end{thm}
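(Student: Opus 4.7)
My plan for part (1) is as follows. The bialgebra $(\overline{F}_{\mathbb{P}}(1), m, \Delta_*)$ is the commutative incidence bialgebra recalled in the Introduction; its bialgebra structure is classical and can be invoked from \cite{S94}. The bialgebra structure of $(\overline{F}_{\mathbb{P}}(1), *, \Delta)$ and the duality statement will be established simultaneously by verifying the two pairing identities
\[
\langle \isoclasse{\mathcal A\mathcal B}, \isoclasse{\mathcal C}\rangle = \langle \isoclasse{\mathcal A}\otimes \isoclasse{\mathcal B}, \Delta(\isoclasse{\mathcal C})\rangle, \qquad \langle \isoclasse{\mathcal A}*\isoclasse{\mathcal B}, \isoclasse{\mathcal C}\rangle = \langle \isoclasse{\mathcal A}\otimes \isoclasse{\mathcal B}, \Delta_*(\isoclasse{\mathcal C})\rangle,
\]
after which transposition automatically transfers the bialgebra axioms of $(m, \Delta_*)$ to $(*, \Delta)$.

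For the first identity, both sides vanish unless $\isoclasse{\mathcal A\mathcal B} = \isoclasse{\mathcal C}$; in the nontrivial case, if $\mathcal C$ has connected components of isomorphism types with multiplicities $m_j$, and $\mathcal A, \mathcal B$ take $a_j$ and $b_j = m_j - a_j$ copies respectively, the coproduct $\Delta(\mathcal C)$ contributes exactly $\prod_j \binom{m_j}{a_j}$ matching terms; the required identity then reduces to
\[
s_{\isoclasse{\mathcal A}}\, s_{\isoclasse{\mathcal B}} \prod_j \binom{m_j}{a_j} = s_{\isoclasse{\mathcal C}},
\]
which follows immediately from $s_{\isoclasse{\mathcal A}} = \prod_j s_{C_j}^{a_j}\, a_j!$ (and similarly for $\mathcal B$ and $\mathcal C$). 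The second identity is checked along the same lines by enumerating the admissible cuts of $\mathcal C$ whose lower and upper parts realize $\isoclasse{\mathcal A}$ and $\isoclasse{\mathcal B}$, and by matching automorphism weights.

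For part (2), the plan is to compute $\Delta_*(\isoclasse{\mathcal A} \downarrow \isoclasse{\mathcal B})$ directly. Since in $\mathcal A \downarrow \mathcal B$ every element of $A$ lies below every element of $B$, an admissible subset $I \subseteq A \sqcup B$ with $(A \sqcup B)\setminus I < I$ must satisfy: if $I$ meets $A$, then $I$ contains $B$ in its entirety. This splits the admissible cuts into two families, namely $I = A_1 \sqcup B$ with $A\setminus A_1 < A_1$ in $\mathcal A$, and $I \subseteq B$ with $B\setminus I < I$ in $\mathcal B$. Summing the two families yields $\Delta_*(\isoclasse{\mathcal A})(1 \otimes \isoclasse{\mathcal B}) + (\isoclasse{\mathcal A} \otimes 1)\Delta_*(\isoclasse{\mathcal B})$, and since they overlap only at $I = B$, which contributes $\isoclasse{\mathcal A}\otimes \isoclasse{\mathcal B}$ in each family, subtracting the doubly counted term will give the unital infinitesimal bialgebra relation of \cite{Loday1}. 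Coassociativity and counitality of $\Delta_*$ are inherited from part (1).

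The hardest part will be the bookkeeping in part (1): the pairing blends the automorphism factor $s_{\isoclasse{\mathcal C}}$ with combinatorial choices at two different levels (connected components for $(m, \Delta)$ and admissible cuts for $(*, \Delta_*)$). The cleanest route is probably to verify the identities first at the level of labelled posets, where both pairings reduce to tautological bijections, and then descend to isomorphism classes by averaging over relabellings.
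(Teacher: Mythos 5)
Your proposal is correct. Part (2) is exactly the paper's argument: the ideals of $\mathcal{A}\downarrow\mathcal{B}$ split into those contained in $B$ and those of the form $I\sqcup B$ with $I$ an ideal of $\mathcal{A}$, with the single overlap $I=B$ accounting for the subtracted term; just note that the products in the resulting compatibility must be the $\downarrow$ products acting factorwise on tensors, as in the paper's formula $\Delta_*(\isoclasse{\mathcal{A}})\downarrow(1\otimes\isoclasse{\mathcal{B}})+(\isoclasse{\mathcal{A}}\otimes 1)\downarrow\Delta_*(\isoclasse{\mathcal{B}})-\isoclasse{\mathcal{A}}\otimes\isoclasse{\mathcal{B}}$, so your juxtaposition should be read in that sense. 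For part (1) you and the paper share the same skeleton (one bialgebra compatibility plus transfer through the automorphism-weighted pairing), but you distribute the work differently: the paper proves directly that $\Delta_*$ is multiplicative for $m$ -- the ideals of $\mathcal{A}\mathcal{B}$ are exactly the unions $I\sqcup J$ of an ideal $I$ of $\mathcal{A}$ and an ideal $J$ of $\mathcal{B}$ -- and takes the adjointness statements ``$\Delta$ dual to $m$'', ``$\Delta_*$ dual to $*$'' for granted where the coproducts are defined, whereas you cite the incidence-bialgebra structure of $(m,\Delta_*)$ from Schmitt and instead verify the two adjointness identities, including the correct automorphism count $s_{\isoclasse{\mathcal{A}}}\,s_{\isoclasse{\mathcal{B}}}\prod_j\binom{m_j}{a_j}=s_{\isoclasse{\mathcal{C}}}$; your labelled-posets-then-average strategy is indeed the right way to handle the more delicate $(*,\Delta_*)$ adjointness, since isomorphisms $\mathcal{S}\to\mathcal{C}$ from admissible order extensions on $A\sqcup B$ are counted once by $s_{\isoclasse{\mathcal{C}}}$ and once by ideals of $\mathcal{C}$ weighted by $s_{\isoclasse{\mathcal{A}}}s_{\isoclasse{\mathcal{B}}}$. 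What each route buys: yours makes rigorous exactly the duality the paper only asserts, at the price of outsourcing the bialgebra axiom for $(m,\Delta_*)$; the paper's is self-contained on that axiom (its verification is one line) but silent on the pairing. If you want a fully self-contained proof, add the short computation $\Delta_*(\isoclasse{\mathcal{A}\mathcal{B}})=\Delta_*(\isoclasse{\mathcal{A}})\Delta_*(\isoclasse{\mathcal{B}})$ alongside your pairing verification; also be aware that identifying $\overline{F}_\mathbb{P}(1)$ with its graded dual via this pairing (in your argument as in the paper's) implicitly requires the integers $s_{\isoclasse{\mathcal{A}}}$ to be invertible in the ground field.
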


\begin{proof} Let $\mathcal{A}=(A,\leq_A)$ be a finite poset, and $I\subseteq A$. We shall say that $I$ is an ideal of $\mathcal{A}$ if
$A\setminus I<I$.\\

1. For $(\overline{F}_\mathbb{P}(1),m,\Delta_*)$, it remains only to prove the compatibility of $m$ and $\Delta_*$.
Let $\mathcal{A}$ and $\mathcal{B}$ be finite posets. The ideals of $\mathcal{A}\mathcal{B}$ are the subposets $I=I_1 I_2$, where $I_j$ is an ideal of 
$\mathcal{A}_j$ for all $j$. So:
\begin{align*}
\Delta_*(\isoclasse{\mathcal{A}\mathcal{B}})&=\sum_{\mbox{\scriptsize $I$ ideal of $\mathcal{A}$}}
\sum_{\mbox{\scriptsize $J$ ideal of $\mathcal{B}$}}
\isoclasse{\mathcal{A}\mathcal{B}_{\mid A\sqcup B \setminus I\sqcup J}}\otimes \isoclasse{\mathcal{A}\mathcal{B}_{\mid I \sqcup J}}\\
&=\sum_{\mbox{\scriptsize $I$ ideal of $\mathcal{A}$}}\sum_{\mbox{\scriptsize $J$ ideal of $\mathcal{B}$}}
\isoclasse{\mathcal{A}_{\mid A\setminus I}}\isoclasse{\mathcal{B}_{\mid B\setminus J}}\otimes \isoclasse{\mathcal{A}_{\mid I}}
\isoclasse{\mathcal{B}_{\mid J}}\\
&=\Delta_*(\mathcal{A})\Delta_*(\mathcal{B}).
\end{align*}
Hence, $(\overline{F}_\mathbb{P}(1),m,\Delta_*)$ is a bialgebra. By duality, $(\overline{F}_\mathbb{P}(1),*,\Delta)$ also is.\\

2. It remains to prove the compatibility between $\downarrow$ and $\Delta_*$. Let $\mathcal{A}$ and $\mathcal{B}$ be two finite posets.
The ideals of $\mathcal{A}\downarrow \mathcal{B}$ are the ideals of $\mathcal{B}$ and the ideals $I\sqcup B$, where $I$ is an ideal of $\mathcal{A}$.
Note that in this description, $B$ appears two times, as $B$ and $\emptyset \sqcup B$. Hence:
\begin{align*}
\Delta_*(\isoclasse{\mathcal{A}\downarrow \mathcal{B}})&=\sum_{\mbox{\scriptsize $I$ ideal of $\mathcal{A}$}} 
\isoclasse{\mathcal{A}\downarrow\mathcal{B}_{\mid A\setminus I}}\otimes \isoclasse{\mathcal{A}\downarrow \mathcal{B}_{\mid I\sqcup B}}
+\sum_{\mbox{\scriptsize $I$ ideal of $\mathcal{B}$}} 
\isoclasse{\mathcal{A}\downarrow\mathcal{B}_{\mid A\sqcup B\setminus I}}\otimes \isoclasse{\mathcal{A}\downarrow \mathcal{B}_{\mid I}}\\
&-\isoclasse{\mathcal{A}\downarrow\mathcal{B}_{\mid A}}\otimes \isoclasse{\mathcal{A}\downarrow\mathcal{B}_{\mid B}}\\
&=\sum_{\mbox{\scriptsize $I$ ideal of $\mathcal{A}$}}
 \isoclasse{\mathcal{A}_{\mid A\setminus I}} \otimes\isoclasse{\mathcal{A}_{\mid I}}
\downarrow \isoclasse{\mathcal{B}}
+\sum_{\mbox{\scriptsize $I$ ideal of $\mathcal{B}$}} \isoclasse{\mathcal{A}}\downarrow \isoclasse{\mathcal{B}_{\mid B\setminus I}} 
\otimes \isoclasse{\mathcal{B}_{\mid I}}-\isoclasse{\mathcal{A}}\otimes \isoclasse{\mathcal{B}}\\
&=\Delta_*(\isoclasse{\mathcal{A}})\downarrow(1\otimes \isoclasse{\mathcal{B}})+(\isoclasse{\mathcal{A}}\otimes 1)
\downarrow \Delta_*(\isoclasse{\mathcal{B}})-\isoclasse{\mathcal{A}}\otimes \isoclasse{\mathcal{B}}.
\end{align*}
So $(\overline{F}_\mathbb{P}(1),\downarrow,\Delta_*)$ is indeed an infinitesimal bialgebra. \end{proof}

All these objects are graded by the cardinality of posets and are connected. By the rigidity theorem for connected infinitesimal bialgebras \cite{Loday1},
$(\overline{F}_\mathbb{P}(1),\downarrow,\Delta_*)$ is isomorphic to a tensor algebra, with the concatenation product and the deconcatenation coproduct.
Consequently:

\begin{cor}
The commutative bialgebra $(\overline{F}_\mathbb{P}(1),m,\Delta_*)$ is cofree;
the cocommutative bialgebra $(\overline{F}_\mathbb{P}(1),*,\Delta)$ is free.
\end{cor}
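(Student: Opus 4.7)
The plan is to extract both statements from the rigidity theorem for connected graded infinitesimal bialgebras invoked just above. That theorem provides a graded isomorphism of infinitesimal bialgebras
\[
\Psi:(\overline{F}_\mathbb{P}(1),\downarrow,\Delta_*)\longrightarrow (T(V),\mathrm{conc},\mathrm{deconc}),
\]
where $V$ denotes the space of $\Delta_*$-primitive elements, graded by poset cardinality. Each homogeneous component of $\overline{F}_\mathbb{P}(1)$ is finite-dimensional, since there are only finitely many isoclasses of posets on any finite set; in particular $V$ is locally finite-dimensional.

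For the first assertion, I would forget the product on both sides of $\Psi$: what remains is an isomorphism of graded coassociative coalgebras between $(\overline{F}_\mathbb{P}(1),\Delta_*)$ and the deconcatenation coalgebra on $T(V)$. The latter is, by construction, the cofree connected coassociative coalgebra cogenerated by $V$. Hence the underlying coalgebra of the commutative bialgebra $(\overline{F}_\mathbb{P}(1),m,\Delta_*)$ is cofree in the connected coassociative sense, which is what the statement means since $\Delta_*$ is not cocommutative.

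For the second assertion, I would invoke the duality between $(\overline{F}_\mathbb{P}(1),*,\Delta)$ and $(\overline{F}_\mathbb{P}(1),m,\Delta_*)$ established in the preceding theorem. Under graded duality in the locally finite-dimensional connected setting, the graded dual of a cofree connected coalgebra on $V$ is the free connected associative algebra on $V^*$; concretely, the graded dual of $(T(V),\mathrm{deconc})$ is $(T(V^*),\mathrm{conc})$. Transporting the isomorphism furnished by $\Psi$ through this duality, one obtains that $(\overline{F}_\mathbb{P}(1),*)$ is free as an associative algebra, with generators given by a copy of $V^*$ in the appropriate grading.

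The main thing to be careful about is keeping the notions of freeness and cofreeness aligned with the non-cocommutative flavour of $\Delta_*$: everything must be interpreted in the connected coassociative category, not the connected cocommutative one, and graded local-finiteness of $\overline{F}_\mathbb{P}(1)$ is exactly what is needed to pass cleanly from cofreeness in the first part to freeness in the second via duality. Once these points are kept straight, the proof is a direct application of Loday's rigidity result together with the duality already proved.
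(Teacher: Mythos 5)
Your argument is correct and follows essentially the same route as the paper: the rigidity theorem of Loday gives the isomorphism of $(\overline{F}_\mathbb{P}(1),\downarrow,\Delta_*)$ with a tensor algebra carrying deconcatenation, so the coalgebra $(\overline{F}_\mathbb{P}(1),\Delta_*)$ is cofree, and the freeness of $(\overline{F}_\mathbb{P}(1),*)$ then follows by the graded duality with $(\overline{F}_\mathbb{P}(1),m,\Delta_*)$ established in the preceding theorem. Your added remarks on local finite-dimensionality and on interpreting cofreeness in the connected coassociative (not cocommutative) category are exactly the right precautions, and they are consistent with what the paper leaves implicit.
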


\section{Suboperads generated in degree $2$}\label{sect:suboperads}

\subsection{WN Posets}

Let $\mathcal{A}$ be a finite poset. We shall say that it is a \textsl{poset without N}, or \textsl{WN poset}, if it does not contain any subposet isomorphic to $\pquatresix$ \cite{Stanley,Fposets}.
For any finite set $A$, the space of finite WN poset structures on $A$ is denoted by $\mathbb{WNP}_A$.
We define in this way a linear species $\mathbb{WNP}$. For example, here are the isoclasses of \ignore{$\triangledown$-compatible} WN posets of cardinality $\leq 4$:
$$\tun;\hspace{.5cm}\tun\tun,\tdeux;\hspace{.5cm}\tun\tun\tun,\tun\tdeux,\ttroisun,\ttroisdeux,\ptroisun;$$
$$\tun\tun\tun\tun,\tun\tun\tdeux,\tdeux\tdeux,\ttroisun\tun,\ttroisdeux\tun,\ptroisun\tun,
\tquatreun,\tquatredeux,\tquatrequatre,\tquatrecinq,\pquatreun,\pquatredeux,\pquatrequatre,\pquatresept,\pquatrehuit.$$

\begin{lem} \begin{enumerate}
\item Let $\mathcal{A}$ be a WN poset. It can be written in a unique way as $\mathcal{A}=\mathcal{A}_1\downarrow \ldots \downarrow \mathcal{A}_k$, 
with for all $1\leq i \leq k$, $\isoclasse{\mathcal{A}_i}=\tun$ or $\mathcal{A}_i$ is not connected.
\item Let $\mathcal{A}$ and $\mathcal{B}$ be two finite posets. The following conditions are equivalent:
\begin{enumerate}
\item $\mathcal{A}$ and $\mathcal{B}$ are WN.
\item $\mathcal{A}\mathcal{B}$ is WN.
\item $\mathcal{A}\downarrow\mathcal{B}$ is WN.
\end{enumerate}
\end{enumerate}\end{lem}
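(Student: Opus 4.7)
I would dispose of Part 2 first and then derive Part 1 with the help of the equivalence relation generated by incomparability.

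For Part 2 the implications $(b)\Rightarrow (a)$ and $(c)\Rightarrow (a)$ are immediate because $\mathcal{A}$ and $\mathcal{B}$ embed as induced subposets of both composites. For $(a)\Rightarrow (b)$ one uses that the Hasse graph of $\pquatresix$ is connected, so any subposet of $\mathcal{A}\mathcal{B}$ isomorphic to $\pquatresix$ has its four vertices in a single connected component, i.e.\ entirely inside $\mathcal{A}$ or entirely inside $\mathcal{B}$, violating WN on that side. For $(a)\Rightarrow (c)$ one uses that in $\mathcal{A}\downarrow\mathcal{B}$ any two incomparable elements lie in a common factor; the three incomparable pairs of $\pquatresix$ form a connected graph on its four vertices, so once again any embedded copy lives in a single factor.

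For Part 1, introduce on $\mathcal{A}$ the equivalence relation $\sim$ generated by incomparability: $x\sim y$ iff $x=y$ or there exists a sequence $x=z_0,\ldots,z_m=y$ with $z_i$ incomparable to $z_{i+1}$ for every $i$. A purely combinatorial propagation argument -- which uses no WN hypothesis -- shows that whenever $[x]\neq [y]$ and $x<y$ then $x'<y'$ for all $x'\in[x]$ and $y'\in[y]$: moving $x$ to $x'$ along an incomparability chain, at each step the alternative "replacement $>y$" is ruled out by transitivity, while "replacement incomparable to $y$" would fuse $[x]$ with $[y]$. Consequently the $\sim$-classes are totally ordered and $\mathcal{A}=[x_1]\downarrow[x_2]\downarrow\cdots\downarrow[x_k]$ as posets, for every finite poset.

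The WN hypothesis is then used only to guarantee that each class is a singleton or disconnected, which I would prove by strong induction on $|\mathcal{A}|$. When $\mathcal{A}$ has at least two classes, each class is a smaller WN subposet and inherits its own $\sim$-classes from $\mathcal{A}$ (any $\sim$-chain between elements of $[x_i]$ cannot exit $[x_i]$ without producing an element of another class incomparable to one of $[x_i]$, contradicting the linearity of the class order); the induction hypothesis then yields the decomposition, which must be trivial since otherwise $[x_i]$ would split into at least two of its own $\sim$-classes. It remains to show that a single-class WN poset of cardinality $\geq 2$ must be disconnected. Suppose otherwise: then there is a covering pair $u\lessdot v$ and, by single-classness, a shortest incomparability chain $u=z_0,z_1,\ldots,z_m=v$ with $m\geq 2$. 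For $m\geq 3$, minimality forces every non-consecutive pair $(z_i,z_j)$ to be comparable, and the orientation analysis on $\{z_0,z_1,z_2,z_3\}$ (transitivity eliminates inconsistent orientations against the three prescribed incomparabilities) forces $z_0<z_2,\ z_0<z_3,\ z_1<z_3$, namely a copy of $\pquatresix$, contradicting WN. For $m=2$, connectedness supplies a Hasse neighbour $w$ of $z_1$ in $\mathcal{A}$; the analysis on $\{u,v,z_1,w\}$ exhibits an N directly in the generic orientations, and in the residual cases where it produces a WN configuration such as $\pquatredeux$, $\tquatredeux$ or two disjoint copies of $\tdeux$, the single-class assumption together with connectedness of $\mathcal{A}$ forces an additional element whose joining to a suitable 4-subset completes a copy of $\pquatresix$. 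Uniqueness in Part 1 is then immediate from the intrinsic character of $\sim$: any decomposition of $\mathcal{A}$ into singleton/disconnected factors must have each block equal to a $\sim$-class. The main obstacle is the residual $m=2$ case in the single-class analysis: the natural 4-subset is not always an N, and one has to chase along the Hasse diagram of $\mathcal{A}$ out of $z_1$, using single-classness to produce an explicit witness. The $m\geq 3$ case and the other steps are routine once the equivalence $\sim$ has been introduced.
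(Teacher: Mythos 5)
Your Part 2 is correct and essentially the paper's own argument: a copy of $\pquatresix$ is connected, so it sits inside one component of $\mathcal{A}\mathcal{B}$, and its incomparability graph is connected (equivalently, it admits no nontrivial ordinal-sum decomposition), so it sits inside one factor of $\mathcal{A}\downarrow\mathcal{B}$; the converse directions are immediate. Your framework for Part 1 is also sound and genuinely different from the paper's: the canonical decomposition of any finite poset into the classes of the equivalence generated by incomparability does give $\mathcal{A}=[x_1]\downarrow\cdots\downarrow[x_k]$, it makes uniqueness automatic (a disconnected factor has connected incomparability graph, so any admissible decomposition has blocks equal to the $\sim$-classes), and your $m\geq 3$ analysis is correct: on $\{z_0,z_1,z_2,z_3\}$ minimality plus transitivity leaves only two orientations, both isomorphic to $\pquatresix$ (you state one; the other is its dual, which is again an N).

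The genuine gap is exactly where you flag it, and it is not a routine residue: the case $m=2$ is the entire content of the existence statement. After the $m\geq 3$ step, what remains to prove is that a connected WN poset of cardinality $\geq 2$ cannot have a connected incomparability graph, and your local chase does not close. In the residual configurations (the 4-subset $\{u,v,z_1,w\}$ isomorphic to $\pquatredeux$, $\tquatredeux$ or $\tdeux\,\tdeux$) there is in general no single additional element completing an N: for instance, starting from two cross-incomparable chains $u<v$ and $z_1<w$, applying your own observation to the comparable pair $(z_1,w)$ only produces a further witness $t$ incomparable to both, which may also be incomparable to $u$ and $v$, and the chase recurses without terminating. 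A genuinely global or extremal argument is needed here; the paper supplies it by taking a connected WN poset with at least two maximal elements, setting $\mathcal{A}'=\{x\mid x\leq_A M\ \text{for all}\ M\in\max(\mathcal{A})\}$, and using a minimal alternating chain between two maximal elements (whose minimality, under WN, forces length one and hence an N) to show $\min(\mathcal{A})\subseteq\mathcal{A}'$, then using WN on quadruples $\{x,y,M,M'\}$ to conclude $\mathcal{A}=\mathcal{A}'\downarrow\mathcal{A}''$. Until you supply an argument of this kind for your single-class case, the existence half of Part 1 is unproved.
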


\begin{proof} (1) {\it Existence}. We proceed by induction on $n=|\mathcal{A}|$. If $n=1$, then $\isoclasse{\mathcal{A}}=\tun$ and the result is obvious. 
Let us assume the result at all rank $<n$. If $\mathcal{A}$ is not connected, we choose $k=1$ and $\mathcal{A}_1=\mathcal{A}$. 
Let us assume that $\mathcal{A}$ is connected. If $\mathcal{A}$ has a unique maximal element $M$, then for all $x\in \mathcal{A}$, 
$x \leq_A M$ so $\mathcal{A}=(\mathcal{A}\setminus \{M\})\downarrow \{M\}$:
we then apply the induction hypothesis to $\mathcal{A}\setminus \{M\}$ and obtain a decomposition of $\mathcal{A}$. 
Let us assume that $\mathcal{A}$ has at least two maximal elements.  We put:
\begin{align*}
\mathcal{A}'&=\{x\in \mathcal{A}\mid \forall M\in \max(\mathcal{A}'), x\leq_A M\},& \mathcal{A}''=\mathcal{A}\setminus \mathcal{A}'.
\end{align*}
Let $M \in \max(\mathcal{A})$ and $M'$ be another maximal element of $\mathcal{A}$. 
We do not have $M\leq M'$, so $M \in \mathcal{A}''$, hence $\max(\mathcal{A})\subseteq \mathcal{A}''$.
Let $m\in \min(\mathcal{A})$, and let us assume that $m\notin \mathcal{A}'$. So there exists a maximal element $M'$, such that $m$ and $M'$ 
are not comparable in $\mathcal{A}$. Moreover, there exists $m\in \max(\mathcal{A})$, $m\leq_A M$. As $\mathcal{A}$ is connected, 
there exists elements $x_1,\ldots,x_{2k-1}$ such that 
$x_0=M\geq_\mathcal{A} x_1\leq_A x_2\geq_\mathcal{A}\ldots \geq_\mathcal{A} x_{2k-1} \leq_A x_{2k}=M'$. 
We choose these elements in such a way that $k$ is minimal.
As $\mathcal{A}$ is WN, necessarly $k=1$, and then $M,m,x_1,M'$ is a copy of $\pquatresix$ in $\mathcal{A}$: contradiction. 
So $\min(\mathcal{A})\subseteq \mathcal{A}'$: consequently, $\mathcal{A}'$ and $\mathcal{A}''$ are nonempty.\\

Let $x \in \mathcal{A}'$ and $y\in \mathcal{A}''$. If $y\in \max(\mathcal{A})$, by definition of $\mathcal{A}'$, $x\leq_A y$. If not, as $y\in \mathcal{A}''$, 
there exists $M\in \max(\mathcal{A})$ such that $y$ and $M$ are not comparable in $\mathcal{A}$; there exists $M'\in \max(\mathcal{A})$, such that $y\leq_A M'$. As $x \in \mathcal{A}'$, $x\leq_A M,M'$.
As $\{x,y,M,M'\}$ cannot be isomorphic to $\pquatresix$, necessarily $x\leq_A y$. Finally, $\mathcal{A}=\mathcal{A}'\downarrow \mathcal{A}''$.
Applying the induction hypothesis to $\mathcal{A}'$ and $\mathcal{A}''$, we obtain the result.\\

{\it Unicity.} Let us assume that $\mathcal{A}=\mathcal{A}_1\downarrow \ldots \downarrow \mathcal{A}_k
=\mathcal{A}'_1\downarrow\ldots\downarrow \mathcal{A}'_l$, with for all $i,j$, $\mathcal{A}_i$ and $\mathcal{A}'_j$ reduced to a single point
or non connected. We proceed by induction on $k$. First, observe that 
if $k \geq 2$, $\mathcal{A}$ is connected and not reduced to a single point. So $k=1$ is equivalent to $l=1$, which proves the result for $k=1$.
Let us assume the result at rank $k-1$. We consider $\mathcal{A}'=\{x\in \mathcal{A}\mid \forall M\in \max(\mathcal{A}), x<_\mathcal{A} M\}$.
Clearly, $\mathcal{A}_1\downarrow\ldots \downarrow \mathcal{A}_{k-1}\subseteq \mathcal{A}'$. If $\mathcal{A}_k=\{M\}$, 
then $M$ is the unique maximal element of $\mathcal{A}$, and $M\notin \mathcal{A}'$. If $\mathcal{A}_k$ is not reduced to a single element, 
then it is not connected. Let $x\in \mathcal{A}_k$
and let $M$ be a maximal element of $\mathcal{A}_k$ which is in a different connected component. Then $M$ is a maximal element of $\mathcal{A}$
and we do not have $x<_\mathcal{A} M$, so $x\notin \mathcal{A}'$. Consequently, $\mathcal{A}'=\mathcal{A}_1\downarrow \ldots \downarrow  \mathcal{A}_{k-1}$. 
Similarly, $\mathcal{A}'=\mathcal{A}_1\downarrow \ldots \downarrow \mathcal{A}'_{l-1}$. Taking the complement, $\mathcal{A}_k=\mathcal{A}'_l$. 
Using the induction hypothesis, $k=l$, and $\mathcal{A}_i=\mathcal{A}'_i$ for all $1\leq i\leq k-1$. 

(2)  (a) $\Longleftrightarrow$ (b). Let $\mathcal{N}$ be a copy of $\pquatresix$ in $\mathcal{A}\mathcal{B}$. As $\pquatresix$ is connected,
it is included in a component of $\mathcal{AB}$, so is included in $A$ or is included in $B$. So $\mathcal{AB}$ is WN
if, and only if, $\mathcal{A}$ and $\mathcal{B}$ are.\\

(a) $\Longleftrightarrow$ (c).
Let $\mathcal{N}$ be a copy of $\pquatresix$ in $\mathcal{A}\downarrow\mathcal{B}$. We put $N_1=\mathcal{N}\cap A$ and $N_2=\mathcal{N}\cap B$.
Then, for all $x\in N_1$, $y\in N_2$, $x\leq_{\mathcal{A}\downarrow\mathcal{B}} y$, 
so $\mathcal{N}=\mathcal{N}_{\mid N_1}\downarrow \mathcal{N}_{\mid N_2}$.
The only possibilities are $(N_1,N_2)=(\mathcal{N},\emptyset)$ or $(\emptyset,\mathcal{N})$, so $\mathcal{N}\subseteq A$ or $\mathcal{N}\subseteq B$.
Hence, $\mathcal{A}\downarrow \mathcal{B}$ is WN if, and only if, $\mathcal{A}$ and $\mathcal{B}$ are. \end{proof}

The vector space $F_\mathbb{WNP}(1)$ generated by the set of isoclasses of WN posets is an algebra for both products
$m$ and $\downarrow$: this is a $2$-As algebra, with the terminology of \cite{Loday2}. As the first product is commutative,
we shall say that it is a Com-As algebra.

\begin{thm}
$\mathbb{WNP}$ is a suboperad of $(\mathbb{P},\bullet)$. It is generated by $m=\tdun{$1$}\tdun{$2$}$ and 
$\downarrow= \tddeux{$1$}{$2$} \in \mathbb{P}_{\{1,2\}}$,
and the relations:
\begin{align*}
m^{(12)}&=m,&m\bullet_1 m&=m\bullet_2 m,&\downarrow\bullet_1\downarrow&=\downarrow\bullet_2\downarrow.
\end{align*}\end{thm}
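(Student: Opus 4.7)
The plan is to establish three assertions: $\mathbb{WNP}$ is closed under the partial compositions $\bullet_a$; every WN poset is reachable from the arity-two elements $m$ and $\downarrow$; and the three listed relations form a complete presentation of the resulting suboperad.

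For closure, I would show that if $\mathcal{A}$ and $\mathcal{B}$ are WN, then so is $\mathcal{A}\bullet_a\mathcal{B}$. I argue by contradiction: assume $\mathcal{N}\subseteq\mathcal{A}\bullet_a\mathcal{B}$ is a copy of $\pquatresix$, and partition its four vertices as $N_A\sqcup N_B$ with $N_A\subseteq A\setminus\{a\}$ and $N_B\subseteq B$. The definition of $\bullet_a$ provides a \emph{uniformity} property: for every $y\in A\setminus\{a\}$, the comparability of $y$ with any $x\in B$ inside $\mathcal{A}\bullet_a\mathcal{B}$ is determined by the pair $(y,a)$ in $\mathcal{A}$, independently of $x$. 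Case-split on $|N_B|$: the cases $|N_B|\in\{0,4\}$ produce an N inside $\mathcal{A}$ or inside $\mathcal{B}$; the cases $|N_B|\in\{2,3\}$ are ruled out by a finite check, because in $\pquatresix$ no vertex shares the same comparability pattern with any three of the other three, and for every pairing into two-and-two at least one vertex on the complementary side has a mixed comparability pattern with the chosen pair; finally, the case $|N_B|=1$ produces an N in $\mathcal{A}$ by replacing the unique element of $N_B$ with $a$.

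For generation, I induct on $|\mathcal{A}|$. The singleton case is immediate. For $|\mathcal{A}|\geq 2$, the decomposition lemma writes $\mathcal{A}=\mathcal{A}_1\downarrow\cdots\downarrow\mathcal{A}_k$ with each factor either a singleton or disconnected. If $k\geq 2$, every $\mathcal{A}_i$ is a strictly smaller WN poset, and $\mathcal{A}$ is reassembled by iterated $\downarrow$; if $k=1$, then $\mathcal{A}$ is disconnected, hence a disjoint union of strictly smaller connected WN posets reassembled by iterated $m$. In either case, the induction hypothesis puts the constituents, and hence $\mathcal{A}$, in the suboperad generated by $m$ and $\downarrow$.

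For the presentation, I first verify the three relations in $\mathbb{WNP}$: commutativity of $m$ is immediate from the symmetry of disjoint union, and the associativities of $m$ and of $\downarrow$ are recorded in Theorem \ref{theoproduits}. This provides a surjective operad morphism $\pi\colon P\twoheadrightarrow\mathbb{WNP}$, where $P$ is the operad with the stated presentation, surjective by the previous step. To show $\pi$ is injective, I introduce an alternating normal form in $P$: a rooted planar tree whose internal nodes are tagged $m$ (unordered children, at least two, none itself $m$-tagged) or $\downarrow$ (linearly ordered children, at least two, none itself $\downarrow$-tagged), with leaves indexed by the input slots. The three relations are precisely what is needed to rewrite every element of $P$ into such a form and to prove this form unique. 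On the other side, iterating the unicity clause of the decomposition lemma (alternating between the $\downarrow$-decomposition and the decomposition of disconnected WN posets into connected components) produces, for each WN poset, a unique alternating tree of the same combinatorial shape. Matching the two bijections yields injectivity of $\pi$. The main obstacle lies precisely here: proving uniqueness of the alternating normal form in $P$ from only the three stated relations, and cleanly aligning it with the recursive structure of WN posets provided by the Lemma.
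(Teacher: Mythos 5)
Your first two steps coincide with the paper's. The closure of $\mathbb{WNP}$ under $\bullet_a$ is proved there by exactly the case analysis on $|N\cap B|$ that you compress via the uniformity property (in $\mathcal{A}\bullet_a\mathcal{B}$, an element of $B$ is comparable to a given $y\in A\setminus\{a\}$ precisely when $a$ is comparable to $y$ in $\mathcal{A}$, and in the same direction), and your check that no two-or-three element subset of $\pquatresix$ can play the role of $N\cap B$ is the same finite verification the paper writes out in ten cases; the $|N\cap B|=1$ case, contracting $B$ to $a$, is also identical. The generation step is the same induction on cardinality, splitting a disconnected WN poset into two nonempty pieces glued by $m$ and a connected one by the $\downarrow$-decomposition of the Lemma. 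Where you genuinely diverge is the presentation. The paper never works inside the abstractly presented operad: it uses the criterion that the presentation holds once free $\mathbb{WNP}$-algebras have the universal property of free Com-As algebras, and for $F_{\mathbb{WNP}}(1)$ it constructs the unique morphism $\phi$ to an arbitrary Com-As algebra with a marked element by induction on cardinality, the well-definedness of $\phi$ resting on the unicity clause of the decomposition Lemma. Your route --- a surjection $\pi\colon P\twoheadrightarrow\mathbb{WNP}$ from the presented operad, then injectivity via alternating normal forms --- is a legitimate alternative; it works directly with labelled operad components (so it does not need the paper's ``the other cases are proved similarly'' reduction to one generator), at the price of an explicit rewriting argument.

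Concerning the obstacle you flag: you do not need to prove uniqueness of the alternating normal form inside $P$ from the three relations alone, and trying to do so head-on is harder than necessary. It suffices to show (i) the alternating trees \emph{span} each component of $P$, which is a routine induction using the two associativities to flatten and the commutativity of $m$ to disregard the order of $m$-children, and (ii) distinct alternating trees are sent by $\pi$ to distinct WN posets, which is exactly the iterated unicity statement you already invoke (unique connected components for $m$-nodes; the unique $\mathcal{A}_1\downarrow\cdots\downarrow\mathcal{A}_k$ with singleton or disconnected factors for $\downarrow$-nodes, noting that a $\downarrow$ of nonempty posets is always connected so the two node types cannot be confused). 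Any element of $\ker\pi$, expanded in the spanning set, then has all coefficients zero because its image is a combination of linearly independent posets; injectivity follows, and uniqueness of the normal form in $P$ is a corollary rather than a prerequisite. With that adjustment your plan closes, and both proofs are seen to rest on the same combinatorial input, the decomposition Lemma --- the paper channelling it through universal properties of free algebras, you through exhibiting a basis of the presented operad.
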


\begin{proof} {\it First step.} 
 Let us first prove that $\mathbb{WNP}$ is a suboperad of $\mathbb{P}$ for the product $\bullet$. Let $\mathcal{A},\mathcal{B}$ be two WN posets, 
 and $a\in \mathcal{A}$. Let us assume that $\mathcal{C}=\mathcal{A}\bullet_a \mathcal{B}$ contains a copy $N$ of $\pquatresix$. 
 The elements of $N$ are denoted in this way:
$$\xymatrix{z\ar@{-}[d]\ar@{-}[rd]&t\ar@{-}[d]\\x&y}$$
As $\mathcal{A}$ and $\mathcal{B}$ are WN, $N$ cannot be included in 
$\mathcal{A}\setminus \{a\}$, nor in $\mathcal{B}$. If $N\cap \mathcal{B}$ is a singleton, $\mathcal{C}/\mathcal{B}\to\hskip -3mm\to\{a\}=\mathcal{A}$ 
contains a copy of $\pquatresix$, formed by $a$ and the three elements of $N\cap \mathcal{A}\setminus\{a\}$: this is a contradiction. 
If $N\cap \mathcal{B}$ contains two elements, six cases are possible:
\begin{itemize}
\item $N\cap \mathcal{B}=\{x,y\}$: as $y\leq_C t$, we should have $x\leq_C t$: this is a contradiction.
\item $N\cap \mathcal{B}=\{x,z\}$: as $y\leq_C z$, we should have $x\leq_C z$: this is a contradiction.
\item $N\cap \mathcal{B}=\{x,t\}$: as $x\leq_C z$, we should have $t\leq_C z$: this is a contradiction.
\item $N\cap \mathcal{B}=\{y,z\}$: as $x\leq_C z$, we should have $x\leq_C y$: this is a contradiction.
\item $N\cap \mathcal{B}=\{y,t\}$: as $y\leq_C z$, we should have $t\leq_C z$: this is a contradiction.
\item $N\cap \mathcal{B}=\{z,t\}$: as $x\leq_C z$, we should have $x\leq_C t$: this is a contradiction.
\end{itemize}
If $N\cap \mathcal{B}$ contains three elements, four cases are possible:
\begin{itemize}
\item $N\cap \mathcal{B}=\{x,y,z\}$: as $y\leq_C t$, we should have $x\leq_C t$: this is a contradiction.
\item $N\cap \mathcal{B}=\{x,y,t\}$: as $y\leq_C z$, we should have $t\leq_C z$: this is a contradiction.
\item $N\cap \mathcal{B}=\{x,z,t\}$: as $y\leq_C t$, we should have $y\leq_C x$: this is a contradiction.
\item $N\cap \mathcal{B}=\{y,z,t\}$: as $x\leq_C z$, we should have $x\leq_C y$: this is a contradiction.
\end{itemize}
In all cases, we obtain a contradiction, so $\mathcal{C}$ is WN. \\

{\it Second step.} We denote by $\mathbb{P}'$ the suboperad of $\mathbb{P}$ generated by $\tdun{$1$}\tdun{$2$}$ and $\tddeux{$1$}{$2$}$.
By the first point, $\mathbb{P}'\subseteq \mathbb{WNP}$. Let us prove the inverse inclusion. Let $\mathcal{A}$ be a WN poset, let us prove
that $\mathcal{A}\in \mathbb{P}'$ by induction on $n=|\mathcal{A}|$. This is obvious if $n=1$ or $n=2$. 
Let us assume the result at all rank $<n$, with $n \geq 3$. If $\mathcal{A}$ is not connected, we can write $\mathcal{A}=\mathcal{A}_1\mathcal{A}_2$, 
with $\mathcal{A}_1$ and $\mathcal{A}_2$ nonempty. By restriction, $\mathcal{A}_1$ and $\mathcal{A}_2$ are WN,
so belong to $\mathbb{P}'$ by the induction hypothesis. Then:
$$\mathcal{A}=\tdun{$1$}\tdun{$2$}\bullet(\mathcal{A}_1,\mathcal{A}_2) \in \mathbb{P}'.$$
If $\mathcal{A}$ is connected, as it is WN we can write it as $\mathcal{A}=\mathcal{A}_1 \downarrow \mathcal{A}_2$,  
with $\mathcal{A}_1$ and $\mathcal{A}_2$ nonempty. By restriction, $\mathcal{A}_1$ and $\mathcal{A}_2$ are WN, so belong 
to $\mathbb{P}'$ by the induction hypothesis. Then:
$$\mathcal{A}=\tddeux{$1$}{$2$}\bullet(\mathcal{A}_1,\mathcal{A}_2) \in \mathbb{P}'.$$

{\it Last step.} In order to give the presentation of $\mathbb{WNP}$ by generators and relations, it is enough to prove that
free $\mathbb{WNP}$-algebras satisfy the required universal property.
We restrict ourselves to $F_\mathbb{WNP}(1)$, the other cases are proved similarly.
More precisely, let $(A,m,\downarrow)$ be a Com-As algebra, that is to say that $(A,m)$ is an associative, commutative algebra, and $(A,\downarrow)$
is an associative algebra, and let $a\in A$. Let us show that there exists a unique morphism $\phi$ of Com-As algebras from $F_\mathbb{WNP}(1)$ to $A$,
sending $\tun$ to $a$.
For this, we consider an iso-class $\isoclasse{\mathcal{A}}$ of a WN poset. 
We define $\phi(\isoclasse{\mathcal{A}})$ by induction on the cardinality of 
$\mathcal{A}$ in the following way:
\begin{itemize}
\item $\phi(\tun)=a$.
\item If $\phi$ is not connected, we put $\mathcal{A}=\mathcal{A}_1\ldots \mathcal{A}_k$, where $k\geq 2$ and $\mathcal{A}_1,\ldots,\mathcal{A}_k$ 
are connected. We put:
$$\phi(\isoclasse{\mathcal{A}})=\phi(\isoclasse{\mathcal{A}_1})\ldots\phi(\isoclasse{\mathcal{A}_k}).$$
As the product $m$ of $A$ is commutative, this does not depend of the chosen order on the connected components of $\mathcal{A}$, so is well-defined.
\item If $\isoclasse{\mathcal{A}}\neq \tun$ and $\mathcal{A}$ is connected, by the preceding lemma it can be uniquely written as 
$\mathcal{A}=\mathcal{A}_1\downarrow \ldots \downarrow \mathcal{A}_k$, with $k\geq 2$, with for all $i$ $\isoclasse{\mathcal{A}_i}=\tun$
or $\mathcal{A}_i$ not connected. We put:
$$\phi(\isoclasse{\mathcal{A}})=\phi(\isoclasse{\mathcal{A}_1})\downarrow\ldots \downarrow \phi(\isoclasse{\mathcal{A}_k}).$$
\end{itemize} 
It is an easy exercise to prove that $\phi$ is indeed a Com-As algebra morphism. \end{proof}

\subsection{Compositions $\blacktriangle$ and $\blacktriangledown$}

We first define the notion of $\triangledown$-compatible poset, inductively on the cardinality. Let $\mathcal{A}=(A,\leq_A)$ be a finite poset of cardinality $n$.
We shall need the following notation: we first put
$$A'=\{y\in A\mid \forall x\in \min(\mathcal{A}), \: x<_\mathcal{A} y\}.$$
We then define $b\mathcal{A}=\mathcal{A}_{\mid A'}$ and $r\mathcal{A}=\mathcal{A}_{\mid A\setminus A'}$.
Note that the minimal elements of $\mathcal{A}$ do not belong to $A'$, so $r\mathcal{A}$ is not empty. It may happen that $b\mathcal{A}$ is empty.

Let us now define $\triangledown$-compatible posets by induction on the cardinality.  If $n=1$, then $\isoclasse{\mathcal{A}}=\tun$ is $\triangledown$-compatible.
If $n \geq 2$, we shall say that $\mathcal{A}$ is $\triangledown$-compatible if one of the following conditions holds:
\begin{enumerate}
\item $\mathcal{A}$ is not connected and all the connected components of $\mathcal{A}$ are $\triangledown$-compatible.
\item $\mathcal{A}$ is connected and the following conditions hold:
\begin{enumerate}
\item $b\mathcal{A}$ is not empty.
\item $b\mathcal{A}$ and $r\mathcal{A}$ are $\triangledown$-compatible.
\item $\mathcal{A}=b\mathcal{A}\triangledown r\mathcal{A}$.
\end{enumerate}\end{enumerate}
The subspecies of $\mathbb{P}$ of $\triangledown$-compatible posets is denoted by $\mathbb{CP}_\triangledown$.\\

For example, here are the isoclasses of $\triangledown$-compatible posets of cardinality $\leq 4$:
$$\tun;\hspace{.5cm}\tun\tun,\tdeux;\hspace{.5cm}\tun\tun\tun,\tun\tdeux,\ttroisun,\ttroisdeux,\ptroisun;$$
$$\tun\tun\tun\tun,\tun\tun\tdeux,\tdeux\tdeux,\ttroisun\tun,\ttroisdeux\tun,\ptroisun\tun,
\tquatreun,\tquatredeux,\tquatrequatre,\tquatrecinq,\pquatreun,\pquatrequatre,\pquatresix,\pquatresept,\pquatrehuit.$$

\begin{lem} Let $\mathcal{A}=(A,\leq_A)$ and $\mathcal{B}=(B,\leq_B)$ be two finite posets.
\begin{enumerate}
\item $b(\mathcal{A}\triangledown \mathcal{B})=\mathcal{A}b\mathcal{B}$ and $r(\mathcal{A}\triangledown \mathcal{B})=r\mathcal{B}$.
\item Let us assume that $\mathcal{A}$ is $\triangledown$-compatible.
\begin{enumerate}
\item $\mathcal{A}$ is connected and different from $\tun$ if, and only if, $b\mathcal{A}$ is nonempty.
\item If $\mathcal{A}$ is connected, then  $\isoclasse{r\mathcal{A}}=\tun$ or $r\mathcal{A}$ is not connected.
\end{enumerate}\end{enumerate}\end{lem}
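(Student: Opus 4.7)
The plan is to prove Part~1 first as the combinatorial core and then derive Parts 2(a) and 2(b) from it together with the inductive definition of $\triangledown$-compatibility.

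For Part~1, assuming $A$ and $B$ nonempty, I would first identify $\min(\mathcal{A}\triangledown \mathcal{B})$. By the third clause in the definition of $\triangledown$, every element of $\min(\mathcal{B})$ lies strictly below every element of $A$, so no element of $A$ is minimal; every element of $B\setminus\min(\mathcal{B})$ is strictly above some element of $\min(\mathcal{B})$, hence not minimal either; and the elements of $\min(\mathcal{B})$ themselves remain minimal, since the only clause that could put something strictly below them would force it to lie in $\min(\mathcal{B})$ too. Thus $\min(\mathcal{A}\triangledown \mathcal{B})=\min(\mathcal{B})$. I would then compute the set $(A\sqcup B)'$ defining $b(\mathcal{A}\triangledown \mathcal{B})$: for $y\in A$ the condition $x<_{\mathcal{A}\triangledown \mathcal{B}}y$ for all $x\in\min(\mathcal{B})$ is automatic (third clause plus $y\notin B$), whereas for $y\in B$ it reduces to membership in the set $B'$ defining $b\mathcal{B}$. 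Hence $(A\sqcup B)'=A\sqcup B'$.

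The crucial order-theoretic observation is that when one restricts the partial order of $\mathcal{A}\triangledown\mathcal{B}$ to $A\sqcup B'$, the third clause never fires: indeed $B'\cap \min(\mathcal{B})=\emptyset$ because a minimum cannot be strictly above itself. Only the first two clauses survive, and they exactly describe the disjoint-union order on $\mathcal{A}\, b\mathcal{B}$. Symmetrically, the restriction of the order to $B\setminus B'$ uses only the second clause, yielding $r\mathcal{B}$; this gives the second equality of Part~1.

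For Part 2(a), the implication $\Rightarrow$ is literally clause (2a) in the definition of $\triangledown$-compatibility. The converse does not even need $\triangledown$-compatibility; I would argue by contraposition. If $\mathcal{A}=\tun$, then $\min(\mathcal{A})=A$ and no $y\in A$ satisfies $y<_\mathcal{A} y$, so $b\mathcal{A}$ is empty. If $\mathcal{A}$ is disconnected, any $y\in A$ is incomparable with the minimum of any other connected component, and again $b\mathcal{A}$ is empty.

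For Part 2(b), assume $\mathcal{A}$ connected. If $\mathcal{A}=\tun$ then $r\mathcal{A}=\tun$ and we are done. Otherwise clause (2c) gives $\mathcal{A}=b\mathcal{A}\triangledown r\mathcal{A}$, and applying Part~1 to this decomposition yields $b\mathcal{A}=b\mathcal{A}\, b(r\mathcal{A})$. Comparing cardinalities forces $b(r\mathcal{A})=\emptyset$. Since $r\mathcal{A}$ is $\triangledown$-compatible by clause (2b), the contrapositive of Part 2(a) applied to $r\mathcal{A}$ gives $r\mathcal{A}=\tun$ or $r\mathcal{A}$ disconnected. The only mildly delicate step is the verification in Part~1 of the identification of $(A\sqcup B)'$ and the check that the third clause of $\triangledown$ is inert on it, but no genuine obstacle is expected beyond this bookkeeping.
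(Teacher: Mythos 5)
Your proof is correct and takes essentially the same route as the paper: Part~1 rests on the same key observation $\min(\mathcal{A}\triangledown\mathcal{B})=\min(\mathcal{B})$ (you merely spell out the bookkeeping the paper leaves implicit), 2(a) uses the same minimal-element-in-another-component argument, and 2(b) uses the same identity $b\mathcal{A}=b\mathcal{A}\,b(r\mathcal{A})$ coming from Part~1, with your cardinality count replacing the paper's phrasing as a direct contradiction.
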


\begin{proof} (1) This comes easily from the observation that $\min(\mathcal{A}\triangledown \mathcal{B})=\min \mathcal{B}$. \\

(2) (a) $\Longrightarrow$. By definition of the $\triangledown$-compatibility. $\Longleftarrow$. If $\isoclasse{\mathcal{A}}=\tun$, then obviously
$b\mathcal{A}$ is empty. If $\mathcal{A}$ is not connected, let us take $a\in A$. If $m$ is a minimal element of connected component
of $\mathcal{A}$ which does not contain $a$, we do not have $a>_A m$, so $a\notin A'$: $b\mathcal{A}$ is empty.\\

2. (b) Let us assume that $\isoclasse{r\mathcal{A}} \neq \tun$ and is connected. By (2) (a), $b r\mathcal{A}$ is not empty.
By (1), $b\mathcal{A}=b(b\mathcal{A} \triangledown r\mathcal{A})=b\mathcal{A}br\mathcal{A} \neq b\mathcal{A}$, which is a contradiction. \end{proof}

\begin{lem} \label{lemmetriangle}
Let $\mathcal{A}=(A,\leq_A)$ and $\mathcal{B}=(B,\leq_B)$ be two $\triangledown$-compatible posets. Then $\mathcal{A}\mathcal{B}$ 
and $\mathcal{A}\triangledown \mathcal{B}$ are $\triangledown$-compatible.
\end{lem}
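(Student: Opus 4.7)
The plan is to proceed by induction on $n=|\mathcal{A}|+|\mathcal{B}|$, treating the two claims in turn. For the disjoint union $\mathcal{A}\mathcal{B}$, observe that its connected components are precisely those of $\mathcal{A}$ together with those of $\mathcal{B}$, and each such component is $\triangledown$-compatible by unfolding the assumption on $\mathcal{A}$ and $\mathcal{B}$. Since $\mathcal{A}\mathcal{B}$ is disconnected (assuming both factors are nonempty; otherwise the claim is trivial), condition~(1) of the definition of $\triangledown$-compatibility delivers the result without any induction.

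For $\mathcal{A}\triangledown \mathcal{B}$, I would first observe that this poset is connected, since every element of $A\sqcup B$ is comparable to any fixed element of $\min(\mathcal{B})$. The previous lemma identifies $b(\mathcal{A}\triangledown\mathcal{B})=\mathcal{A}\,b\mathcal{B}$ and $r(\mathcal{A}\triangledown\mathcal{B})=r\mathcal{B}$, so condition~(2)(a) is immediate from $\mathcal{A}\neq\emptyset$. For condition~(2)(b) I would split cases on $\mathcal{B}$: if $\isoclasse{\mathcal{B}}=\tun$ or $\mathcal{B}$ is not connected, then part~(2)(a) of the previous lemma (applied to $\mathcal{B}$) forces $b\mathcal{B}=\emptyset$, whence $b(\mathcal{A}\triangledown\mathcal{B})=\mathcal{A}$ and $r(\mathcal{A}\triangledown\mathcal{B})=\mathcal{B}$ are both $\triangledown$-compatible by hypothesis. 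Otherwise $\mathcal{B}$ is connected and different from $\tun$, and the definition of $\triangledown$-compatibility for $\mathcal{B}$ directly supplies the $\triangledown$-compatibility of $b\mathcal{B}$ and $r\mathcal{B}$; then the induction hypothesis applied to the pair $(\mathcal{A},b\mathcal{B})$, whose cardinalities sum to less than $n$ because $|b\mathcal{B}|<|\mathcal{B}|$, yields the $\triangledown$-compatibility of $\mathcal{A}\,b\mathcal{B}$.

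It remains to verify condition~(2)(c), namely the identity of posets $\mathcal{A}\triangledown\mathcal{B}=(\mathcal{A}\,b\mathcal{B})\triangledown r\mathcal{B}$. This will be a direct comparison of the two partial orders on $A\sqcup B$, carried out case by case on the location of a pair $(x,y)$. The only non-routine cases are those with $x\in r\mathcal{B}$ and $y\in A$ (or $y\in b\mathcal{B}$), in which the two definitions agree precisely when $\min(r\mathcal{B})=\min(\mathcal{B})$. This identity follows from a short transitivity argument: if $z\in b\mathcal{B}$ satisfies $z<_B x$ for some $x\in r\mathcal{B}$, then every minimal element of $\mathcal{B}$ lies strictly below $z$, hence strictly below $x$, which would force $x\in b\mathcal{B}$, a contradiction; hence any element of $r\mathcal{B}$ minimal in $r\mathcal{B}$ is already minimal in $\mathcal{B}$, and conversely. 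I expect the main bookkeeping to sit in this last verification, since the other two conditions reduce straightforwardly to the definition and to the preceding lemma.
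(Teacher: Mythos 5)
Your reduction of the statement to the three conditions of $\triangledown$-compatibility, via $b(\mathcal{A}\triangledown\mathcal{B})=\mathcal{A}\,b\mathcal{B}$ and $r(\mathcal{A}\triangledown\mathcal{B})=r\mathcal{B}$, is sound, and the disjoint-union half as well as conditions (2)(a) and (2)(b) are handled correctly (the appeal to an induction hypothesis for $\mathcal{A}\,b\mathcal{B}$ is superfluous, since your disjoint-union argument needs no induction, but it is harmless). The genuine gap is in your verification of (2)(c). The identity $\mathcal{A}\triangledown\mathcal{B}=(\mathcal{A}\,b\mathcal{B})\triangledown r\mathcal{B}$ does \emph{not} follow from $\min(r\mathcal{B})=\min(\mathcal{B})$ alone: in the case $x\in r\mathcal{B}$, $y\in b\mathcal{B}$, the left-hand order gives $x\leq y$ if and only if $x\leq_B y$, whereas the right-hand order gives $x\leq y$ if and only if $x\in\min(r\mathcal{B})$, and the implication $x\leq_B y\Rightarrow x\in\min(r\mathcal{B})$ fails for general posets. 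Concretely, take $\mathcal{B}$ on $\{1,2,3,4\}$ with $1<_B2<_B4$ and $3<_B4$ (an instance of $\pquatredeux$): then $b\mathcal{B}=\{4\}$, $r\mathcal{B}=\{1,2,3\}$, and $\min(r\mathcal{B})=\{1,3\}=\min(\mathcal{B})$, yet $2\leq_B4$ while $2\notin\min(r\mathcal{B})$, so $\mathcal{A}\triangledown\mathcal{B}\neq(\mathcal{A}\,b\mathcal{B})\triangledown r\mathcal{B}$ for any nonempty $\mathcal{A}$. Your stated criterion would wrongly validate this case; the missing ingredient is exactly the hypothesis that $\mathcal{B}$ is $\triangledown$-compatible, used in the form $\mathcal{B}=b\mathcal{B}\triangledown r\mathcal{B}$ (available in your second branch, where $\mathcal{B}$ is connected and different from $\tun$), which is what forbids a non-minimal element of $r\mathcal{B}$ from lying below an element of $b\mathcal{B}$.

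Once that relation is invoked, your case-by-case comparison does close, but note that the paper bypasses the bookkeeping entirely: it computes $(\mathcal{A}\,b\mathcal{B})\triangledown r\mathcal{B}=\mathcal{A}\triangledown(b\mathcal{B}\triangledown r\mathcal{B})=\mathcal{A}\triangledown\mathcal{B}$, using the non-associative permutative relation $(xy)\triangledown z=x\triangledown(y\triangledown z)$ of Theorem \ref{theoproduits} together with $b\mathcal{B}\triangledown r\mathcal{B}=\mathcal{B}$; this is where the compatibility of $\mathcal{B}$ enters there, and it is the step your sketch omits. Two minor further points: your justification of connectedness of $\mathcal{A}\triangledown\mathcal{B}$ (``every element is comparable to any fixed element of $\min(\mathcal{B})$'') is not literally true for elements of $B$, although connectedness is clear since every element of $B$ lies above some minimal element of $\mathcal{B}$ and every minimal element lies below all of $A$; and the induction on $|\mathcal{A}|+|\mathcal{B}|$ can be dispensed with altogether.
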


\begin{proof} Let $\mathcal{A}=\mathcal{A}_1\ldots \mathcal{A}_k$ and $\mathcal{B}=\mathcal{B}_1\ldots \mathcal{B}_l$ be the decomposition of 
$\mathcal{A}$ and $\mathcal{B}$ into connected components.
By definition, $\mathcal{A}_1,\ldots,\mathcal{A}_k,\mathcal{B}_1,\ldots,\mathcal{B}_l$ are $\triangledown$-compatible. 
Then $\mathcal{A}\mathcal{B}$ is not connected and its connected components
are $\mathcal{A}_1,\ldots,\mathcal{A}_k,\mathcal{B}_1,\ldots, \mathcal{B}_l$, so $\mathcal{A}\mathcal{B}$ is $\triangledown$-compatible.\\

Let us prove that $\mathcal{C}=\mathcal{A}\triangledown \mathcal{B}$ is $\triangledown$-compatible
First, observe that $r\mathcal{C}=r\mathcal{B}$ and $b\mathcal{C}=\mathcal{A} b\mathcal{B}$. 
\begin{itemize}
\item If $\mathcal{B}$ is not connected or reduced to a single element, $r\mathcal{C}=\mathcal{B}$ and $b\mathcal{C}=\mathcal{A}$,
so are both $\triangledown$-compatible.
\item If not, $r\mathcal{C}=r\mathcal{B}$ and $b\mathcal{C}=\mathcal{A} b\mathcal{B}$ are both $\triangledown$-compatible,
as $\mathcal{B}$ is $\triangledown$-compatible. 
\end{itemize}
Moreover:
$$b\mathcal{C}\triangledown r\mathcal{C}=(\mathcal{A} b\mathcal{B})\triangledown r\mathcal{B}
=\mathcal{A} \triangledown (b\mathcal{B}\triangledown r\mathcal{B})=\mathcal{A} \triangledown \mathcal{B}=\mathcal{C},$$
so $\mathcal{C}$ is $\triangledown$-compatible. \end{proof}

\begin{thm}
The species $\mathbb{C\mathcal{A}}_\triangledown$ is a suboperad of $(\mathbb{P},\blacktriangledown)$. It is generated by the elements
$m=\tdun{$1$}\tdun{$2$}$ and $\triangledown=\tddeux{$1$}{$2$}$ in $\mathbb{CP}^\triangledown(\{1,2\})$, and the relations:
\begin{align*}
m^{(12)}&=m,&m\blacktriangledown_1 m&=m\blacktriangledown_2 m,&
\triangledown\blacktriangledown_1 m&=\triangledown\blacktriangledown_2\triangledown.
\end{align*}\end{thm}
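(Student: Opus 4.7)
The plan is to mimic the three-step proof of the preceding theorem for $\mathbb{WNP}$, using the decomposition $\mathcal A = b\mathcal A \triangledown r\mathcal A$ of a connected $\triangledown$-compatible poset in place of the $\downarrow$-decomposition of a connected WN poset.

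I first verify the three stated relations in $(\mathbb P,\blacktriangledown)$. The commutativity $m^{(12)}=m$ is immediate from the symmetry of $\tdun\tdun$, and $m\blacktriangledown_1 m = m\blacktriangledown_2 m$ is an elementary check: both sides produce the poset on three incomparable points. For $\triangledown\blacktriangledown_1 m = \triangledown\blacktriangledown_2 \triangledown$, evaluating both sides on a triple $(\mathcal A_1,\mathcal A_2,\mathcal A_3)$ reduces the identity to $(\mathcal A_1\mathcal A_2)\triangledown\mathcal A_3=\mathcal A_1\triangledown(\mathcal A_2\triangledown\mathcal A_3)$, and unfolding the definition of the binary product $\triangledown$ given in Theorem~\ref{theoproduits} shows that both posets carry the internal orders of $\mathcal A_1,\mathcal A_2,\mathcal A_3$ together with the relations forcing $\min \mathcal A_3$ below every element of $\mathcal A_1\cup\mathcal A_2$, and no comparability between $\mathcal A_1$ and $\mathcal A_2$.

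Writing $\mathbb P'$ for the suboperad of $(\mathbb P,\blacktriangledown)$ generated by $m$ and $\triangledown$, the inclusion $\mathbb P'\subseteq\mathbb{CP}_\triangledown$ follows by induction on arity from Lemma~\ref{lemmetriangle}, which ensures that $\mathbb{CP}_\triangledown$ is closed under $m$- and $\triangledown$-composition. For the reverse inclusion I argue by induction on $|\mathcal A|$ for $\mathcal A\in\mathbb{CP}_\triangledown$: if $\mathcal A$ is disconnected, its connected components are $\triangledown$-compatible by the very definition and strictly smaller, so $\mathcal A=m\blacktriangledown(\mathcal A_1,\mathcal A_2)\in\mathbb P'$; if $\mathcal A$ is connected and distinct from $\tun$, the lemma preceding the theorem provides $b\mathcal A\neq\emptyset$ together with the decomposition $\mathcal A=b\mathcal A\triangledown r\mathcal A$, with both factors $\triangledown$-compatible and strictly smaller, whence $\mathcal A=\triangledown\blacktriangledown(b\mathcal A,r\mathcal A)\in\mathbb P'$.

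For the presentation I adapt the universal-property argument from the WN proof. Given any algebra $(A,m,\triangledown)$ in which $m$ is commutative and associative and $\triangledown$ satisfies $(xy)\triangledown z=x\triangledown(y\triangledown z)$, and given $a\in A$, I define $\phi:F_{\mathbb{CP}_\triangledown}(1)\to A$ inductively on cardinality: set $\phi(\tun)=a$; on a disjoint union, $\phi$ is the $m$-product of the images of the connected components (well-defined since $m$ is commutative and associative in $A$); on a connected poset of size at least two, $\phi(\isoclasse{\mathcal A})=\phi(\isoclasse{b\mathcal A})\triangledown\phi(\isoclasse{r\mathcal A})$, which is unambiguous because part (2)(b) of the preceding lemma forces $r\mathcal A$ to be either $\tun$ or disconnected, and the decomposition itself is uniquely determined by $\mathcal A$. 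The main obstacle is to check that $\phi$ intertwines $\triangledown$, i.e.\ $\phi(\isoclasse{\mathcal A\triangledown\mathcal B})=\phi(\isoclasse{\mathcal A})\triangledown\phi(\isoclasse{\mathcal B})$: the identities $b(\mathcal A\triangledown\mathcal B)=\mathcal A b\mathcal B$ and $r(\mathcal A\triangledown\mathcal B)=r\mathcal B$ from the preceding lemma allow one to expand the left-hand side via the inductive definition, and a short case distinction on whether $\mathcal B$ is $\tun$, disconnected, or connected of size $\geq 2$ then reduces the verification to the NAP identity $(xy)\triangledown z=x\triangledown(y\triangledown z)$ imposed on $\triangledown$, which is precisely the third relation of the presentation.
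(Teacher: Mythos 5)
Your proof is correct and follows the paper's own three-step strategy for the most part: the generation step (induction on cardinality, writing a disconnected poset as a product of components and a connected one as $b\mathcal{A}\triangledown r\mathcal{A}$) and the presentation step (universal property of $F_{\mathbb{CP}_\triangledown}(1)$ for algebras with a commutative associative $m$ and a product $\triangledown$ satisfying $(xy)\triangledown z=x\triangledown(y\triangledown z)$, using $b(\mathcal{A}\triangledown\mathcal{B})=\mathcal{A}\,b\mathcal{B}$, $r(\mathcal{A}\triangledown\mathcal{B})=r\mathcal{B}$ and a case discussion) coincide with the paper's second and last steps; your case distinction on $\mathcal{B}$ is in fact the intended reading of the paper's argument, which twice writes ``if $\mathcal{A}$ is not connected''. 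The one genuine divergence is the suboperad claim. The paper proves directly, by induction on $|\mathcal{A}|$ and the nested/parallel associativity of $\blacktriangledown$, that $\mathcal{A}\blacktriangledown_a\mathcal{B}$ is $\triangledown$-compatible for every insertion point $a$, with Lemma~\ref{lemmetriangle} handling the two binary products; you instead deduce the suboperad property from the equality $\mathbb{CP}_\triangledown=\mathbb{P}'$, obtaining $\mathbb{P}'\subseteq\mathbb{CP}_\triangledown$ from Lemma~\ref{lemmetriangle} alone. This is legitimate, but be aware that it silently invokes the standard fact that every arity~$\geq 2$ element of the suboperad generated by the binary elements $m$ and $\triangledown$ can be written as $m\blacktriangledown(P_1,P_2)$ or $\triangledown\blacktriangledown(P_1,P_2)$ with $P_1,P_2$ again in that suboperad; the proof of that fact (closure of such root-first expressions under partial compositions) is precisely the associativity induction the paper performs explicitly in its first step, so the two routes have the same mathematical content, yours being lighter on the page while the paper's yields the closure under $\blacktriangledown_a$ directly. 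Your explicit check of the three relations, reducing $\triangledown\blacktriangledown_1 m=\triangledown\blacktriangledown_2\triangledown$ to the identity of Theorem~\ref{theoproduits}, usefully makes explicit a point the paper leaves implicit.
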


\begin{proof} {\it First step.} Let us prove that $\mathbb{CP}^\triangledown$ is a suboperad. Let $\mathcal{A}=(A,\leq_A)$ and $\mathcal{B}=(B,\leq_B)$ be
two $\triangledown$-compatible posets, and let $a\in A$; let us show that $\mathcal{C}=\mathcal{A}\blacktriangledown_a \mathcal{B}$ 
is $\triangledown$-compatible. We proceed by induction on the cardinality $n$ of $\mathcal{A}$. If $n=1$, then $\mathcal{C}=\mathcal{B}$
and the result is obvious. Let us assume the result at all ranks $<n$. If $\mathcal{A}$ is not connected, 
we can write $\mathcal{A}=\mathcal{A}_1\mathcal{A}_2$, with $\mathcal{A}_1$ and $\mathcal{A}_2$ nonempty $\triangledown$-compatible posets. Then:
\begin{align*}
\mathcal{C}&=(\tdun{$1$}\tdun{$2$}\blacktriangledown (\mathcal{A}_1,\mathcal{A}_2))\blacktriangledown_a \mathcal{B}\\
&=\tdun{$1$}\tdun{$2$} \blacktriangledown(\mathcal{A}_1\blacktriangledown_a \mathcal{B},\mathcal{A}_2)
\mbox{ or }\tdun{$1$}\tdun{$2$} \blacktriangledown(\mathcal{A}_1,\mathcal{A}_2\blacktriangledown_a \mathcal{B})\\
&=(\mathcal{A}_1\blacktriangledown_a \mathcal{B})\mathcal{A}_2 \mbox{ or }\mathcal{A}_1 (\mathcal{A}_2 \blacktriangledown_a \mathcal{B}).
\end{align*}
We conclude with the induction hypothesis applied to $\mathcal{A}_1$ or $\mathcal{A}_2$ and with lemma \ref{lemmetriangle}.
If $\mathcal{A}$ is connected, we can write $\mathcal{A}=\mathcal{A}_1\triangledown \mathcal{A}_2$, with $\mathcal{A}_1=b\mathcal{A}$ 
and $\mathcal{A}_2=r\mathcal{A}$ nonempty $\triangledown$-compatible posets. Then:
\begin{align*}
\mathcal{C}&=(\tddeux{$1$}{$2$}\blacktriangledown (\mathcal{A}_1,\mathcal{A}_2))\blacktriangledown_a \mathcal{B}\\
&=\tddeux{$1$}{$2$} \blacktriangledown(\mathcal{A}_1\blacktriangledown_a \mathcal{B},\mathcal{A}_2)
\mbox{ or }\tddeux{$1$}{$2$} \blacktriangledown(\mathcal{A}_1,\mathcal{A}_2\blacktriangledown_a \mathcal{B})\\
&=(\mathcal{A}_1\blacktriangledown_a \mathcal{B})\triangledown \mathcal{A}_2 \mbox{ or }\mathcal{A}_1 \triangledown (\mathcal{A}_2 
\blacktriangledown_a \mathcal{B}).
\end{align*}
We conclude with the induction hypothesis applied to $\mathcal{A}_1$ or $\mathcal{A}_2$ and with lemma \ref{lemmetriangle}. \\

{\it Second step.} Let $\mathbb{P}'$ be the suboperad of $(\mathbb{P},\blacktriangledown)$ generated by $\tdun{$1$}\tdun{$2$}$
and $\tddeux{$1$}{$2$}$. The first step implies that $\mathbb{P}' \subseteq \mathbb{CP}^\triangledown$. Let us prove the inverse inclusion.
Let $\mathcal{A}$ be a $\triangledown$-compatible poset; let us prove that $\mathcal{A} \in \mathbb{P}'$ by induction on its cardinality $n$.
It is obvious if $n=1$. Let us assume the result at all rank $<n$. If $\mathcal{A}$ is not connected, we can write $\mathcal{A}=\mathcal{A}_1\mathcal{A}_2$,
with $\mathcal{A}_1$ and $\mathcal{A}_2$ nonempty $\triangledown$-compatible posets. Then $\mathcal{A}=\tdun{$1$}\tdun{$2$}
\blacktriangledown(\mathcal{A}_1,\mathcal{A}_2)$ belongs to $\mathbb{P}'$ by the induction hypothesis.
If $\mathcal{A}$ is connected, we can write $\mathcal{A}=\mathcal{A}_1\triangledown \mathcal{A}_2$, with $\mathcal{A}_1=b\mathcal{A}$ 
and $\mathcal{A}_2=r\mathcal{A}$ nonempty $\triangledown$-compatible posets. Then
$\mathcal{A}=\tddeux{$1$}{$2$}\blacktriangledown (\mathcal{A}_1,\mathcal{A}_2)$ belongs to $\mathbb{P}'$ be the induction hypothesis. \\

{\it Last step.} In order to prove the generation of $\mathbb{CP}^\triangledown$ by generators and relations, 
it is enough to prove the required  universal property for free $\mathbb{CP}^\triangledown$-algebras. 
Let us restrict ourselves to the case of one generator; the proof is similar in the other cases.
$F_{\mathbb{CP}^\triangledown}(1)$ is the space generated by the isoclasses of $\triangledown$-compatible posets, 
with the products $m$ and $\triangledown$. Let $A$ be an algebra with an associative and commutative product $m$ and 
a second product $\triangledown$, such that:
$$\forall x,y,z \in A,\: x \triangledown (y\triangledown z)=(xy)\triangledown z.$$
Let $a\in A$; let us prove that there exists a unique morphism $\phi:F_{\mathbb{CP}^\triangledown}(1)\longrightarrow A$
for the two products $m$ and $\triangledown$, such that $\phi(\tun)=a$. 

We define $\phi(\isoclasse{\mathcal{A}})$ for any $\triangledown$-compatible poset $\mathcal{A}$ by induction on its cardinality $n$.
If $n=1$, then $\phi(\isoclasse{\mathcal{A}})=a$. If $n>1$ and $\mathcal{A}$ is not connected, let us put $\mathcal{A}=\mathcal{A}_1\ldots \mathcal{A}_k$
be the decomposition of $\mathcal{A}$ into connected components. We then put:
$$\phi(\isoclasse{\mathcal{A}})=\phi(\isoclasse{\mathcal{A}_1})\ldots \phi(\isoclasse{\mathcal{A}_k}).$$
As the product $m$ of $A$ is associative and commutative, this does not depend on the chosen order on the $\mathcal{A}_i$, so is well-defined.
If $n>1$ and $\mathcal{A}$ is connected, we write $\mathcal{A}=b\mathcal{A}\triangledown r\mathcal{A}$ and put:
$$\phi(\isoclasse{\mathcal{A}})=\phi(\isoclasse{b\mathcal{A}})\triangledown \phi(\isoclasse{r\mathcal{A}}).$$
Let us prove that this is indeed an algebra morphism for both products. It is immediate for $m$. 
Let $\mathcal{A}$ and $\mathcal{B}$ be two $\triangledown$-compatible
posets. We put $\mathcal{C}=\mathcal{A}\triangledown \mathcal{B}$. 
If $\mathcal{A}$ is not connected, then $b\mathcal{C}=\mathcal{A}$ and $r\mathcal{C}=\mathcal{B}$. By definition of $\phi$,
$\phi(\isoclasse{\mathcal{C}})=\phi(\isoclasse{\mathcal{A}})\triangledown \phi(\isoclasse{\mathcal{B}})$. 
If $\mathcal{A}$ is not connected, then $b\mathcal{C}=\mathcal{A}b\mathcal{B}$ and $r\mathcal{C}=r\mathcal{A}$. By definition of $\phi$:
\begin{align*}
\phi(\isoclasse{\mathcal{C}})&=\phi(\isoclasse{\mathcal{A}b\mathcal{B}})\triangledown \phi(\isoclasse{r\mathcal{B}})\\
&=(\phi(\isoclasse{\mathcal{A}}) \phi(\isoclasse{b\mathcal{B}}))\triangledown \phi(\isoclasse{r\mathcal{B}})\\
&=\phi(\isoclasse{\mathcal{A}}) \triangledown (\phi(\isoclasse{b\mathcal{B}})\triangledown \phi(\isoclasse{r\mathcal{B}}))\\
&=\phi(\isoclasse{\mathcal{A}})\triangledown \phi(\isoclasse{\mathcal{B}}).
\end{align*}
So the morphism $\phi$ exists. As $F_{\mathbb{CP}^\triangledown(1)}$ is generated for the products $m$ and $\triangledown$ by $\tun$,
the unicity is immediate. \end{proof}

\subsection{Comparison of WN posets and $\triangledown$-compatible posets}

The preceding observations on WN posets and $\triangledown$-compatible posets gives the following decompositions:
\begin{enumerate}
\item \begin{enumerate}
\item If $\mathcal{A}$ is a non connected WN poset, it can be written as $\mathcal{A}=\mathcal{A}_1\ldots \mathcal{A}_k$,
where $\mathcal{A}_1,\ldots,\mathcal{A}_k$ are connected WN posets. This decomposition is unique, up to the order of the factors.
\item If $\mathcal{A}$ is connected and different from $\tun$, it can be uniquely written as $\mathcal{A}=\mathcal{A}_1 \downarrow \mathcal{A}_2$,
where $\mathcal{A}_1$ is a WN poset, and $\mathcal{A}_2$ is a WN poset, non connected or equal to $\tun$.
\end{enumerate}
\item \begin{enumerate}
\item If $\mathcal{A}$ is a non connected $\triangledown$-compatible poset, it can be written as $\mathcal{A}=\mathcal{A}_1\ldots \mathcal{A}_k$,
where $\mathcal{A}_1,\ldots,\mathcal{A}_k$ are connected $\triangledown$-compatible posets. This decomposition is unique, up to the order of the factors.
\item If $\mathcal{A}$ is connected and different from $\tun$, it can be uniquely written as $\mathcal{A}=\mathcal{A}_1 \triangledown \mathcal{A}_2$,
where $\mathcal{A}_1$ is a $\triangledown$-compatible poset, and $\mathcal{A}_2$ is a $\triangledown$-compatible poset,
non connected or equal to $\tun$.
\end{enumerate}\end{enumerate}

Consequently, the species $\mathbb{WNP}$ and $\mathbb{CP}^\triangledown$ are isomorphic (as species, not as operads). 
An isomorphism is inductively defined by a bijection $\theta$ from $\mathbb{WNP}_A$ to $\mathbb{CP}^\triangledown_A$ by:
\begin{itemize}
\item $\theta(\tdun{$a$})=\tdun{$a$}$.
\item If $\mathcal{A}=\mathcal{A}_1\ldots \mathcal{A}_k$ is not connected,
$\theta(\mathcal{A})=\theta(\mathcal{A}_1)\ldots \theta(\mathcal{A}_k)$.
\item If $\mathcal{A}=\mathcal{A}_1\downarrow \mathcal{A}_2$ is connected, with $\mathcal{A}_2$ non connected or equal to $1$, then 
$\theta(\mathcal{A})=\theta(\mathcal{A}_1)\triangledown\theta(\mathcal{A}_2)$.
\end{itemize}
For example, if $\mathcal{A}$ is a WN poset of cardinality $\leq 4$, then $\theta(\mathcal{A})=\mathcal{A}$, except if $\isoclasse{\mathcal{A}}=
\pquatredeux$. Moreover:
$$\theta\left(\pdquatredeux{$1$}{$2$}{$3$}{$4$}\right)
=\theta(\tddeux{$1$}{$2$}\tdun{$3$}\downarrow \tdun{$4$})
=\tddeux{$1$}{$2$}\tdun{$3$}\triangledown \tdun{$4$}=\pdquatresix{$1$}{$3$}{$2$}{$4$}.$$

As a consequence, the sequences $(dim(\mathbb{WNP}_{\{1,\ldots,n\}}))_{n\geq 1}$ and $(dim(\mathbb{CP}^\triangledown_{\{1,\ldots,n\}}))_{n\geq 1}$
are both equal to sequence A048172 of the OEIS \cite{Sloane}; considering the isoclasses,
the sequences $(dim(F_\mathbb{WNP}(1)_n)_{n\geq 1}$ and $(dim(F_{\mathbb{CP}^\triangledown}(1)_n)_{n\geq 1}$
are both equal to sequence A003430 of the OEIS.


\end{document}